\newcommand{\dirac}{\slashed{\partial}}
\newcommand{\zbar}{\br{z}}
\newcommand{\GL}{\op{GL}}
\newcommand{\dpa}[1]{\frac{\partial}{\partial #1}}
\newcommand{\Obs}{\op{Obs}}
\newcommand{\eps}{\varepsilon}
\newcommand{\g}{\mathfrak{g}}
\newcommand{\GF}{Q^{GF}}
\newcommand{\xto}{\xrightarrow}
\newcommand{\E}{\mscr{E}}
\newcommand{\A}{\mscr A}
\newcommand{\what}{\widehat}
\newcommand{\tr}{\triangle}
\newcommand{\til}{\widetilde}
\newcommand{\mscr}{\mathscr}
\renewcommand{\det}{\operatorname{det}}
\newcommand{\br}{\overline}
\newcommand{\iso}{\cong}
\newcommand{\C}{\mathbb C}
\newcommand{\Oo}{\mscr O}
\newcommand{\Z}{\mathbb Z}
\newcommand{\into}{\hookrightarrow}
\newcommand{\Gr}{\operatorname {Gr}}
\newcommand{\op}{\operatorname}
\newcommand{\mbb}{\mathbb}
\newcommand{\mf}{\mathfrak}
\newcommand{\mc}{\mathcal}
\newcommand{\ip}[1]{\left\langle #1 \right\rangle}
\newcommand{\abs}[1]{\left| #1 \right|}
\newcommand{\R}{\mbb R}
\renewcommand{\d}{\mathrm{d}}
\newcommand{\liminv}{ \varprojlim }
\newcommand{\dbar}{\br{\partial}}
\DeclareMathOperator{\Sym}{Sym} \DeclareMathOperator{\Hom}{Hom}
\DeclareMathOperator{\Spec}{Spec}
\newtheoremstyle{thm}% name
  {7pt}%      Space above
  {7pt}%      Space below
  {\itshape}%         Body font
  {}%         Indent amount (empty = no indent, \parindent = para indent)
  {\bf}% Thm head font
  {.}%        Punctuation after thm head
  {5pt}%     Space after thm head: " " = normal interword space;
\newtheoremstyle{def}% name
  {7pt}%      Space above
  {10pt}%      Space below
  {\itshape}%         Body font
  {}%         Indent amount (empty = no indent, \parindent = para indent)
  {\bf}% Thm head font
  {.}%        Punctuation after thm head
  {5pt}%     Space after thm head: " " = normal interword space;
\newtheoremstyle{rem}% name
  {4pt}%      Space above
  {7pt}%      Space below
  {}%         Body font
  {}%         Indent amount (empty = no indent, \parindent = para indent)
  {\itshape}% Thm head font
  {:}%        Punctuation after thm head
  {3pt}%     Space after thm head: " " = normal interword space;
\newtheoremstyle{texttheorem}% name
  {8pt}%      Space above
  {8pt}%      Space below
  {\itshape}%         Body font
  {}%         Indent amount (empty = no indent, \parindent = para indent)
  {\bf}% Thm head font
  {. \hspace{5pt}}%        Punctuation after thm head
  {3pt}%     Space after thm head: " " = normal interword space;
\theoremstyle{thm}
\newtheorem*{theorem*}{Theorem}
\newtheorem*{corollary*}{Corollary}
\newtheorem{theorem}{Theorem}[subsection]
\newtheorem{thm-def}{Theorem/Definition}[theorem]
\newtheorem{proposition}[theorem]{Proposition}
\newtheorem{lemma}[theorem]{Lemma}
\newtheorem{corollary}[theorem]{Corollary}
\numberwithin{equation}{subsection}
\theoremstyle{def}
\newtheorem{definition}[theorem]{Definition}
\theoremstyle{rem}
\newtheorem*{remark}{Remark}
\theoremstyle{texttheorem}
\newcommand{\cinfty}{C^{\infty}}
\date{}
\renewcommand{\L}{\mscr{L}}
\newcommand{\Crit}{\op{Crit}}
\theoremstyle{thm}
\begin{document}
\title{Notes on supersymmetric and holomorphic field theories in dimensions $2$ and $4$}
\author{Kevin Costello}
\thanks{Partially supported by NSF grants DMS 0706945 and DMS 1007168, and by a Sloan fellowship.}
\address{Department of Mathematics, Northwestern University.}
\email{costello@math.northwestern.edu}
\begin{abstract}
These notes explore some aspects of formal derived geometry related to classical field theory.   One goal is to explain how many important classical field theories in physics -- such as supersymmetric gauge theories and supersymmetric $\sigma$-models -- can be described very cleanly using derived geometry.   In particular, I describe a mathematically natural construction of Kapustin-Witten's $\mbb{P}^1$ of twisted supersymmetric gauge theories.  
\end{abstract}
\dedicatory{\it To Dennis Sullivan on the occassion of his 70th birthday}
\maketitle
\subsection*{Elliptic moduli problems}
Moduli spaces of solutions to systems of elliptic equations (such as the Yang-Mills instanton equations, self-duality equations, holomorphic map equations, etc.) have played a central role in mathematics for many years.  The first aim of this paper is to develop a general homological language for discussing formal derived moduli problems of solutions to elliptic differential equations.    I call such an object an \emph{elliptic moduli problem}.

The equations of motion of a classical field theory are a system of elliptic differential equations, and so the formal moduli space of their solutions (infinitesimally near a given solution) is an elliptic moduli problem.    The fact that the moduli of solutions to the equations of motion of a classical field theory are  the critical points of an action functional mean that this elliptic moduli problem is equipped with an additional geometric structure: a symplectic form of cohomological degree $-1$.  Following a suggestion of Lurie, I will call a space with a degree $-1$ symplectic form \emph{$0$-symplectic}. \footnote{The reason for this terminology is that there is a close relationship between spaces with a symplectic form of cohomological degree $k$ and the $E_{k+1}$-operad.}

 This elliptic moduli problem, with its symplectic form, is a complete encoding of the classical field theory.      In this paper, we will define a classical field theory to be a $0$-symplectic elliptic moduli problem. 

I will only consider \emph{formal} derived spaces.  Even giving a good definition of symplectic form on a global derived stack is a highly non-trivial matter: the general theory of such objects is worked out in \cite{Vez11} and \cite{PanToeVaq11}. 

In ordinary geometry, the simplest construction of a symplectic manifold is as a cotangent bundle.  There is a similar construction in our context: given any elliptic moduli problem (corresponding to a system of elliptic differential equations) there is a corresponding classical field theory, which we call the \emph{cotangent field theory}.    Many field theories of interest in mathematics and physics arise as cotangent theories.

\subsection*{Quantization}
After setting up this language for discussing classical field theories,  I will briefly discuss what it means to quantize a classical field theory, following \cite{Cos11} and \cite{CosGwi11}.   I will show that a quantization of a cotangent theory to an elliptic moduli problem $\mc M$ on a compact manifold $X$ leads to a volume form\footnote{defined up to an overall constant} on the finite-dimensional formal derived space $\mc M(X)$ of global solutions.  This leads to a program for defining (and computing) the non-perturbative partition function for a cotangent theory: a quantization of the theory yields a volume form on the space of solutions, and the partition function is the volume.    

This program has been carried out successfully in \cite{Cos11a} for the cotangent theory associated to the moduli space of degree zero maps form an elliptic curve to a compact complex manifold $X$.  In this case, the partition function is the Witten genus of $X$.  

More details on this program, and on further examples, will appear in subsequent publications.  

\subsection*{Supersymmetry}
Much of the rest of the paper is devoted to studying examples of classical field theories using this language.  The hope is to convince mathematicians that the framework of derived geometry provides a very natural way to understand supersymmetric field theories (or at least, their holomorphic and topological twists).  

A more concrete goal is to give a mathematically natural construction of the $\mbb{P}^1$ of twisted $\mscr{N}=4$ supersymmetric gauge theories constructed by Kapustin and Witten \cite{KapWit06} in their study of the geometric Langlands program; and to explain how this $\mbb{P}^1$ of field theories is related to the $A$- and $B$-models with target the Hitchin system.

Geometrically, the $\mbb{P}^1$ of twisted $\mscr{N}=4$ gauge theories is given by a family of elliptic moduli problem with a symplectic form of degree $-1$.    At a special point (the $B$-model point) on $\mbb{P}^1$, this is the cotangent theory associated to the elliptic moduli problem of $G$-local systems on a complex surface $S$.  At generic points of $\mbb{P}^1$, the elliptic moduli problem can be interpreted as the de Rham stack of the moduli stack of $G$-local systems on $X$, equipped with a certain symplectic form.  At the $A$-model point, the elliptic moduli problem becomes the de Rham stack of the derived moduli space of Higgs bundles on $X$.

This $\mbb{P}^1$ of theories is expected to become, upon reduction to two dimensions, the $A$- and $B$-models of mirror symmetry with target moduli spaces of Higgs bundles and local systems. In order to understand this, I give a definition of the $A$- and $B$-models, as well as their half-twisted versions.  It turns out that the half-twisted $A$- and $B$-models are particularly easy to describe in the language used in this paper: they are both cotangent theories to natural elliptic moduli problems.  

Then, after briefly discussing the basics of supersymmetry in $4$-dimensions, I describe the self-dual limits of the $\mscr{N}=1,2$ and $4$ supersymmetric gauge theories on $\R^4$, using the twistor-space formalism developed in detail in \cite{BoeMasSki07} (building on earlier work of Witten \cite{Wit04}).    Again, these theories are cotangent theories to natural elliptic moduli problems.

Next, the concept of \emph{twisting} of a supersymmetric gauge theory is introduced.  We will see that the twisting procedure has many features that will be familiar to homological algebraists: for example, there's a spectral sequence starting from the observables of the physical (untwisted) theory and converging to the observables of the twisted theory.  

Once we have defined what it means to twist a supersymmetric field theory, we will analyze twists of the $\mscr{N}=1,2$ and $4$ supersymmetric gauge theories.  We will start by analyzing ``minimal'' twists, which are holomorphic theories in $4$ dimensions which can be further twisted to yield the more familiar topological twists. We will find, again, that the minimally twisted theories are cotangent theories to simple elliptic moduli problems.  The minimally twisted $\mscr{N}=1$ theory on a complex surface $X$ is the cotangent theory associated to the moduli problem of holomorphic $G$-bundles on $X$; the minimally twisted $\mscr{N}=2$ theory is the cotangent theory to the moduli space of such bundles equipped with a section of the adjoint bundle of Lie algebras; and  the twisted $\mscr{N}=4$ theory is the cotangent theory to the moduli of Higgs bundles on $X$.  

The $\mscr{N}=1$ theory can be twisted only once; but the $\mscr{N}=2$ and $4$ theories admit further twists.  In the final section of the paper we will show that the $\mscr{N}=4$ theory admits a further $\mbb{P}^1$ of twists (discussed above), which (at special points in $\mbb{P}^1$) dimensionally reduce to the $A$- and $B$-models with target the Hitchin system, as expected. 

I should say what I mean by dimensional reduction.  As we have seen, a classical field theory on $X$ is a sheaf $\mc M$ of derived spaces on $X$.  Dimensional reduction simply means pushing forward this sheaf along a map $X \to Y$.   The precise relationship between the twisted $\mscr{N}=4$ gauge theory and the $A$- and $B$-models is that if we take our gauge theory on a product $\Sigma_1 \times \Sigma_2$ of two Riemann surfaces, and dimensionally reduce in this sense along the map $\Sigma_1 \times \Sigma_2 \to \Sigma_1$, we find a field theory on $\Sigma_1$ which is equivalent to the $A$-model with target $T^\ast \op{Bun}_G(\Sigma_2)$ (if we use the $A$-twisted $\mscr{N}=4$ gauge theory), or the $B$-model with target $\op{Loc}_G(\Sigma_2)$ (if we use the $B$-twisted $\mscr{N}=4$ gauge theory).

One advantage of the point of view advocated here is that one does not need to know anything about supersymmetry to understand the $\mbb{P}^1$ of twisted $\mscr{N}=4$ theories I describe.   The only reason I discuss supersymmetry in this paper is to justify the assertion that the field theories I describe using derived geometry are the same as those discussed in the physics literature.   

\subsection*{Quantizing twisted supersymmetric gauge theories} So far, we have discussed supersymmetric gauge theories only at the classical level.  I will say almost nothing about quantization: except to prove that the minimally-twisted $\mscr{N}=1,2,4$ theories we consider admit a unique quantization on $\C^2$, compatible with all natural symmetries.   A twist of a deformation of the $\mscr{N}=1$ theory is analyzed in detail in \cite{Cos13}, where it is shown that the factorization algebra associated to this theory at the quantum level is closely related to the Yangian. 

\subsection*{Warning.} The main objects of study in this paper are certain formal derived stacks, equipped with extra geometrical structures (e.g.\ a symplectic form).  I will give all details for how to construct and work with these objects at the formal level.   Even though I don't supply all the details required for my constructions at the non-formal level, I will often (informally) talk about global derived stacks.  

\subsection*{Terminology.}  My use of the term ``twist'' (as in, twisted supersymmetric field theory) differs a little from the way some physicists use this term.  In this paper, given a supersymmetric field theory, the twisted theory is obtained by considering only quantities invariant under a particular supercharge.  (A supercharge is physics-speak for an odd element of the $\Z/2$ graded Lie algebra of supersymmetries, which acts on a supersymmetric field theory).  

For some authors, to twist a supersymmetric field theory is a two-step process.  Given a supersymmetric field theory on $\R^4$, one first changes the action of the Poincar\'e group on the theory by choosing a map from the Poincar\'e group into the $R$-symmetry group.  (The $R$-symmetry group is a Lie group acting on a supersymmetric field theory in a way lifting the trivial action on space-time).  Then, one finds a supercharge $Q$ invariant under this new action of the Poincar\'e group, and twists (in my sense) with respect to $Q$.

\subsection*{Acknowledgements}
First and foremost, I'd like to thank Dennis Sullivan for the many inspiring conversations we've had over the years.   When I met Dennis at a workshop in 2004, I was a first year postdoc and he was, of course, famous; nevertheless, we hit it off and spent much of the workshop talking.  His enthusiasm almost made me forget that we were in Trenton, New Jersey.  

Since then, I've been lucky to be able to spend a lot of time with Dennis, at his seminars in New York and at other conferences.  Our discussions about how one can use homological algebra to understand physics have had a profound influence on my thinking. 

Most mathematicians work in pretty much the same field throughout their career. Dennis is completely different: he will tackle any problem in any field that grips him, and work on it until he solves it. Though my background is in algebraic geometry, I've always wanted to understand quantum field theory and string theory.  Dennis's example encouraged me to think that it wouldn't be completely crazy to work on the foundations of QFT. 

I'm also very grateful to Owen Gwilliam for many helpful conversations about field theory in general, and cotangent field theories in particular; to Lionel Mason, for explaining to me the twistor-space approach to supersymmetry; to Anton Kapustin and Si Li, for illuminating conversations about  supersymmetry; to Jacob Lurie and Nick Rozenblyum, for patiently answering my questions about Grothendieck duality in derived algebraic geometry; to Davesh Maulik and Gabriele Vezzosi, for helpful conversations about derived symplectic geometry; and to Josh Shadlen, for helpful discussions concerning the BD operads and for his comments on the text. I'm also grateful to the anonymous referees for some very helpful suggestions.

\tableofcontents

\addcontentsline{toc}{part}{Elliptic moduli problems}
\section{Elliptic moduli problems}

Let us thus start by trying to give a general definition of an elliptic moduli space.   We will use a little loosely the basic ideas of derived geometry, as developed in \cite{Lur09b,Toe06}.  

\subsection{}
The following statement is at the heart of the philosophy of deformation theory: 
\begin{quote}
There is an equivalence of $(\infty,1)$ categories between the category of differential graded Lie algebras, and the category of formal pointed derived moduli problems. 
\end{quote}
In a different guise, this statement goes back to Quillen's \cite{Qui69} and Sullivan's \cite{Sul77} work on rational homotopy theory.   These ideas were developed extensively in the work of Kontsevich and Soibelman \cite{Kon97,KonSoi}.   More general theorems of this nature are considered in \cite{Lur10}, which is also an excellent survey of these ideas. 

The basic idea of this correspondence is as follows.  Formal moduli problems are defined using the functor of points.  Thus, a formal moduli problem is a functor $F$ which takes a nilpotent Artinian differential graded algebra $R$, and assigns to it the simplicial set $F(R)$ of $R$-points of the moduli problem. If $\g$ is a differential graded Lie algebra, then the formal moduli problem $F_{\g}$ associated to $\g$ is defined as follows.  If $R$ is a nilpotent Artinian dga, with maximal ideal $m \subset R$, then we set
$$
F_{\g}(R) = \op{MC}(\g \otimes m) 
$$
where $\op{MC}(\g \otimes m)$ is the simplicial set of Maurer-Cartan elements of the dg  Lie algebra $\g \otimes m$.  

We are interested in elliptic derived moduli problems: that is, derived moduli problems described by a system of elliptic partial differential equations on a manifold $M$.   As a first step towards a formal definition of an elliptic derived moduli problem, we will give a definition of formal pointed elliptic moduli problem.  Using the principle quoted above as a guide, we will define an formal pointed elliptic moduli problem on a manifold $M$ to be a sheaf of $L_\infty$ algebras on $M$ of a certain kind. 
\begin{definition}
Let $M$ be a manifold.  An elliptic $L_\infty$ algebra on $M$ consists of the following data.
\begin{enumerate}
\item
A graded vector bundle $L$ on $M$, whose space of sections will be denoted $\L$. 
\item A differential operator $\d : \L \to \L$, of cohomological degree $1$ and square $0$, which makes $\L$ into an elliptic complex.
\item  A collection of poly-differential operators
$$
l_n : \L^{\otimes n} \to \L
$$
which are alternating, of cohomological degree $2-n$, and which endow $\L$ with the structure of $L_\infty$ algebra.
\end{enumerate}
\end{definition}
Throughout this paper, formal pointed elliptic moduli problems will be described by elliptic $L_\infty$ algebras. 

If $\L$ is an elliptic $L_\infty$ algebra on a manifold $M$, then it yields a presheaf on $M$ of functors from dg Artin rings to simplicial sets.  If $(R,m)$ is a dg Artin ring with maximal ideal $R$, and if $U \subset M$ is an open subset, then we can consider the simplicial set
$$
\op{MC} ( \L(U) \otimes m)
$$
of Maurer-Cartan elements of the $L_\infty$ algebra $\L(U) \otimes m$ (where $\L(U)$ refers to the sections of $L$ on $U$). We will think of this as the $R$-points of the formal pointed moduli problem associated to $\L(U)$. 

\begin{remark}
When discussing global (i.e.\ non-formal) derived spaces, I will often be quite informal; a thorough treatment of such objects as the derived moduli stack of local systems on a manifold is out of reach of this paper.   I will, however, try to be more precise when talking about formal derived spaces, by giving an explicit description of the corresponding Lie or $L_\infty$ algebra. 
\end{remark}
\section{Examples of elliptic moduli problems}
\subsection{Flat bundles}
The most basic example of an elliptic moduli problem is that associated to flat bundles on a manifold $M$.  Let $G$ be a Lie group, and let $P \to M$ be a principal $G$-bundle equipped with a flat connection.  Let $\mf g_P$ be the adjoint bundle (associated to $P$ by the adjoint action of $G$ on its Lie algebra $\g$).  Thus, $\g_P$ is a bundle of Lie algebras on $M$, with a flat connection.

The elliptic $L_\infty$ algebra controlling deformations of the flat $G$-bundle $P$ is simply
$$
\L = \Omega^\ast(M,\g_P).
$$
The differential on $\L$ is the de Rham differential on $M$ coupled to the flat connection on $\g_P$. 

To see this, observe that any deformation of $P$ just as a $G$-bundle is trivial.  We can, however, deform the flat connection on $P$.  Let $(R,m)$ be a nilpotent Artin ring with maximal ideal $m$.  Then, a family of flat connections on $P$, parametrized by $\Spec R$, is the same as an element
$$
\alpha \in \Omega^1(M, \g_P) \otimes m
$$
satisfying the Maurer-Cartan equation
$$
\d \alpha + \tfrac{1}{2} [\alpha, \alpha ] = 0.
$$
Further, two such Maurer-Cartan elements give the same flat $G$-bundle if and only if they are gauge equivalent.  Gauge equivalences are represented by $1$-simplices in the Maurer-Cartan simplicial set.  Thus, we see that $\pi_0 \op{MC}\left(\Omega^1 ( M , \g_P ) \otimes m \right)$ is the set of isomorphism classes families of flat $G$-bundles over $\Spec R$, which restrict to the given $G$-bundle at the base-point of $\Spec R$. 

One can ask what role the forms $\Omega^i$ for $i > 2$ play in this story.  Of course, if we just probe our moduli problem with ordinary (not dg) Artin rings, we do not detect the higher forms.  However, if $R$ is a differential graded Artin ring, then a Maurer-Cartan element of $\Omega^\ast( M, \g_P) \otimes R$ may have components involving all of the $\Omega^i$.  

\subsection{Self-dual bundles}
Let $M$ be an oriented $4$-manifold.  Let $G$ be a Lie group, and let $P \to M$ be a principal $G$-bundle, and let $\g_P$ be the adjoint bundle of Lie algebras.  Suppose we have a connection $A$ on $P$ with self-dual curvature: 
$$F(A)_- = 0 \in \Omega^2_-(M, \g_P)$$
(here $\Omega^2_-(M)$ denotes the space of anti-self-dual two-forms).

Then, the elliptic Lie algebra controlling deformations of $(P, A)$ is described by the diagram
$$
\Omega^0(M, \g_P ) \xto{\d} \Omega^1(M, \g_P ) \xto{\d_-} \Omega^2_-(M, \g_ P ).
$$
Here $\d_-$ is the composition of the de Rham differential (coupled to the connection on $\g_P$) with the projection onto $\Omega^2_-(M, \g_P)$.

\subsection{Holomorphic bundles}
In a similar way, if $M$ is a complex manifold and if $P \to M$ is a holomorphic principal $G$-bundle, then the elliptic dg Lie algebra $\Omega^{0,\ast}(M, \g_P)$, with differential $\dbar$, describes the formal moduli space of holomorphic $G$-bundles on $M$.
 
\section{Symmetries of elliptic moduli problems}
Suppose that $R$ is a differential graded algebra. Let $R^\sharp$ refer to $R$ without the differential.  
\begin{definition}
An $R$-family of elliptic $L_\infty$ algebras on $X$ consists of graded bundle $L$ of locally-free $R^\sharp$-modules on $X$,  whose sheaf of sections will be denoted $\L$; together with an $R^\sharp$-linear differential operator
$$
\d : \L \to \L
$$
which makes $\L$ into a sheaf of dg $R$-modules.  We require that the complex $(\L, \d)$ is an elliptic complex of dg $R$-modules. Of course, this means that the symbol complex (which is a bundle of dg $R$-modules on $T^\ast X$) is exact away from the zero section. 

Further, $\L$ is equipped with a collection of $R$-linear polydifferential operators
$$
l_n : \L^{\otimes n} \to \L
$$
making $\L$ into a sheaf of $L_\infty$ algebras on $X$ over $R$.  
\end{definition}

\begin{remark}
Note that in this definition, $R$ can be a nuclear Fr\'echet dg algebra. In that case, the tensor products should be the completed projective tensor product. 
\end{remark}
Our main reason for introducing the concept of an $R$-family of elliptic $L_\infty$ algebras is to talk about symmetries.  Recall that in homotopy theory, to give an action of a group $G$ on an object is the same as to give a family of objects over the classifying space $BG$.  There is a similar picture in homotopical algebra: to given an action of an $L_\infty$ algebra $\g$ on some object is the same as to give a family of such objects over $C^\ast(\g)$.  We will take this as our definition of action of an $L_\infty$ algebra $\g$ on an $L_\infty$ space. 
\begin{definition}
If $\g$ is an $L_\infty$ algebra, and $\mscr{L}$ is an elliptic $L_\infty$ algebra on a space $X$, a $\g$-action on $\mscr{L}$ is a family of elliptic moduli problems $\mscr{L}^{\g}$ on $X$, over the base ring $C^\ast(\g)$, which specialize to $\mscr{L}$ modulo the maximal ideal $C^{>0}(\g)$ of $C^\ast(\g)$.  
\end{definition}
\begin{remark}
The Chevalley-Eilenberg cochain complex $C^\ast(\g)$ is the completed pro-nilpotent dg algebra, which is an inverse limit
$$
C^\ast(\g) = \liminv C^\ast(\g) / I^n
$$
where $I$ is the maximal ideal $C^{>0}(\g)$.  
\end{remark}

\subsection{}
There is one more generalization we would like to consider.  The symmetries we considered above always preserve the base point of a formal pointed elliptic moduli problem.  Indeed, we defined a symmetry as a family of formal pointed elliptic moduli problems. In order to consider symmetries which do not preserve the base point, we need to modify our definition so that our family is no longer equipped with a base point.

Let $R$ be a differential graded ring with a nilpotent differential ideal $I \subset R$.  Recall that a formal pointed derived space over $R$ can be described by an is an $L_\infty$ algebra $\g$, in the category of flat $R$-modules.    We can modify this definition to access formal derived spaces which are not equipped with a base point, as follows.

Let $R^\sharp$ denote the graded algebra $R$, with zero differential. 
\begin{definition}
A \emph{curved $L_\infty$ algebra over $R$} consists of a locally free finitely generated graded $R^\sharp$-modules $\g$,  together with a derivation
$$\d :\what{\Sym}^\ast( \g [1] ^\vee ) \to \what{\Sym}^\ast ( \g [1] ^\vee )$$
of cohomological degree $1$ and square zero. In this expression, all tensors and duals are over the graded algebra $R^\sharp$.

The derivation $\d$ must make the completed symmetric algebra $\what{\Sym}^\ast( \g [1] ^\vee )$ into a differential graded algebra over the differential graded algebra $R$.  

Further, when we reduce modulo the nilpotent ideal $I \subset R$, the derivation $\d$ must preserve the ideal in $\what{\Sym}^\ast ( \g[1]^\vee)$ generated by $\g$.
\end{definition}

If $\g$ is a curved $L_\infty$ algebra over $R$, then we let $C^\ast(\g)$ be the differential graded algebra $\what{\Sym}^\ast (\g[1]^\vee )$ over $R$.  Note that $C^\ast(\g)$ is a pro-nilpotent commutative dga over $R$, and can thus be thought of as a formal derived scheme over $R$.  However, the ideal $C^{>0}(\g)$ is not necessarily preserved by the differential, because of the presence of the curving.  This indicates that this formal derived scheme is not pointed.   However, this formal derived scheme is pointed modulo the ideal $I$ in $R$, because the curving vanishes modulo $I$.  This pointing modulo $I$ is given by a map of dg $R$-algebras
$$
C^\ast(\g) \to R / I.
$$

\begin{definition}
Let $\g,\g'$ be curved $L_\infty$ algebras over $R$ is a map 
$$
C^\ast(\g') \to C^\ast(\g)
$$
of commutative pro-nilpotent dg $R$-algebras, with the property that the diagram
$$
\xymatrix{
C^\ast(\g') \ar[r] & C^\ast(\g) \ar[dl] \\
R / I  & \\
}
$$
commutes. 
\end{definition}
In the case that $I = 0$, then $\g$ and $\g'$ are not curved, and such a map is the same as an $L_\infty$-map $\g \to \g'$.  More generally, a map $\g \to \g'$ gives rise to a map of ordinary $L_\infty$-algebras when we reduce modulo $I$. 

A map $\g \to \g'$ can be described by a sequence of maps
$$
\phi_n : \Sym^n (\g[1]) \to \g'[1]
$$
for $n \ge 0$, such that $\phi_0$ vanishes modulo $I$ and such that the usual identity for an $L_\infty$ map holds. 
\begin{definition}
A map $\g \to \g'$ is an equivalence if, modulo $I$, it is a quasi-isomorphism of $L_\infty$ algebras.
\end{definition}
\begin{remark}
We assume that $\g$ are finitely-generated projective modules over the graded ring $R^{\sharp}$, and thus we can recover $\g$ from the $R^{\sharp}$-linear dual of $\g$.  If we do not assume that $\g$ is finitely generated (but continue to assume that $\g$ is flat over $R^{\sharp}$, it is better to use the coalgebra $C_\ast(\g)$ in place of the algebra $C^\ast(\g)$ in the definition of a map of curved $L_\infty$-algebras over $R$. 
\end{remark}

Once we have this definition, it is straightforward to modify our definition of $R$-family of elliptic  $L_\infty$ algebras.
\begin{definition}
Let $R$ be as above.  An $R$-family of curved elliptic $L_\infty$ algebras on $M$ is a graded bundle $L$ of $R^\sharp$ on $M$, whose sheaf of sections $\L$ is equipped with the structure of curved $L_\infty$ algebra over $R$, where the curving vanishes modulo the maximal ideal $I$, and where the structure maps are polydifferential operators. 
\end{definition}

\section{Mapping problems as elliptic moduli problems}
Many field theories of interest in mathematics and physics have, for their space of fields, the space of maps between two manifolds.  In this section I will outline how one can put field theories of this nature into the framework of elliptic moduli problems.  

\begin{definition}
An \emph{elliptic ringed space} is a manifold $M$, equipped with a sheaf $\A$ of commutative differential graded algebras over the sheaf $\Omega^\ast_M$, with the following properties.
\begin{enumerate}
\item $\A$ is concentrated in finitely many degrees.
\item Each $\A^i$ is a locally free sheaf of modules for $\Omega^0_M$ of finite rank.
\item The differential $\d$ on $\A$ makes $\A$ into an elliptic complex.
\item We are given a map of dg $\Omega^\ast_M$-algebras $\A \to \cinfty_M$.  
\end{enumerate}
We will let $\mscr{I} \subset \A$ be the ideal which is the kernel of the map $\A \to \cinfty_M$.
\end{definition}
Note that, because each $\A^i$ is a locally-free sheaf of modules over $\Omega^0_M = \cinfty_M$, $\A$ must be the sheaf of sections of a finite-rank graded vector bundle on $M$.  

We can discuss elliptic ringed spaces over $\R$ or $\C$; an elliptic ringed space over $\C$ is defined as above, except that we work over the sheaf of dg algebras $\Omega^\ast_M \otimes_{\R} \C$.

Here are some examples of elliptic ringed spaces.  
\begin{enumerate}
\item Let $M$ be any manifold.  Then letting $\A = \Omega^\ast_M$ gives an elliptic ringed space which we refer to as $M_{dR}$.  
\item Let $M$ be a complex manifold.  Then there is an elliptic ringed space $M_{\dbar}$ over $\C$, with $\A = \Omega^{0,\ast}(M)$, where the differential is the operator $\dbar$.  The product is simply the usual wedge product of forms.   The homomorphism from the de Rham complex to the Dolbeault complex is the identity on $\Omega^{0,\ast}(M)$ and sends $\Omega^{> 0, \ast}(M)$ to zero.  Thus, the Dolbeault complex is the quotient of the de Rham complex by the ideal generated by $\Omega^{1,\ast}(M)$. 

\item Let $M$ be any complex manifold, and let $R$ be any finite rank holomorphic bundle of graded Artinian algebras on $M$.  Then, $\Omega^{0,\ast}(M,R)$ defines an elliptic ringed space. 
\item  As a special case of the last example, let $M$ be a complex manifold and let $E$ be a holomorphic vector bundle on $M$.  We can define an elliptic ringed space which we right as $E_{\dbar}[1]$ (or as just $E[1]$) with underlying manifold $M$, and  sheaf of algebras
$$
\mscr{A} = \Omega^{0,\ast}( M, \Sym^\ast ( E^\vee[-1] ) ,
$$
where $\Sym^\ast(E^\vee[-1] )$ indicates the free sheaf of graded-commutative algebras generated by $E^\vee$ in degree $1$.  The differential on this dga is just $\dbar$.
\item  Let $M$  be a $4$-manifold with a conformal structure.  Then, the complex 
$$
\mscr{A} = \left\{ \Omega^0 (M) \xto{\d} \Omega^1(M) \xto{\d_-} \Omega^2_-(M) \right\}
$$
gives $M$ the structure of an elliptic ringed space.  Here, the product structure is defined by thinking of $\mscr{A}$ as the quotient of the de Rham complex if $M$ by the differential ideal generated by $\Omega^2_{+}(M)$. 
\end{enumerate}

\subsection{}
We will show how to construct elliptic moduli problems from elliptic ringed spaces.  

To start with, we will explain how to construct the formal moduli spaces of maps from an elliptic ringed space to a formal derived space.   

Recall that, for any $L_\infty$ algebra $\g$, there is a formal moduli problem $B \g$ which assigns to an Artinian dg ring $(R,m)$ the simplicial set $\op{MC}(\g \otimes m)$ of solutions to the Maurer-Cartan equation in $\g \otimes m$. 

If $A$ is a commutative dga, we can think of $L_\infty$ algebra $A \otimes \g$ as describing the formal moduli problems of maps $\Spec A \to B \g$, completed near the constant maps with values the base point of $B \g$.   

Given any finite-dimensional $L_\infty$ algebra $\g$, and any elliptic ringed space $(M, \A)$, we can define an elliptic moduli problem $(M, \A \otimes \g)$.  We will think of this as describing the space of maps from $(M,\A)$ to $B \g$. 

Since the formal neighbourhood of any point in a derived stack is described by an $L_\infty$ algebra, this construction shows that, for any derived stack $Y$ and any elliptic ringed space $(M,\A)$, the space of maps $(M,\A)$ to $Y$ formally completed near a constant map to a point $y \in Y$ is described by an elliptic $L_\infty$ algebra on $M$.  

\section{Global mapping problems and $L_\infty$ spaces}
In this section we will briefly sketch a language which allows us to describe the elliptic moduli problem describing a quite wide class of mapping problems, for instance, that between complex manifolds.  We have seen how to describe mapping problems from an elliptic ringed space to a formal derived scheme; the challenge is to globalize this description.  It is not essential to understand this section in order to follow the rest of the paper.

A formal derived scheme is described by an $L_\infty$ algebra.  Our global construction of a mapping problem will take as target an ``$L_\infty$ space''.
\begin{definition}
An $L_\infty$ space is a manifold $X$ with a sheaf of curved $L_\infty$ algebra $\g$ over the de Rham complex $\Omega^\ast_X$, where the curving vanishes modulo the ideal $\Omega^{>0}_X$. 

Let $(X,\g_X)$ and $(Y,\g_Y)$ be $L_\infty$-spaces.  A map $(X,\g_X) \to (Y,\g_Y)$ is a smooth map $f: X \to Y$ together with an $\Omega^\ast_X$-linear curved $L_\infty$ map
$$
f_\ast : \g_X \to f^\ast \g_Y = f^{-1} \g_Y \otimes_{f^{-1} \Omega^\ast_Y}\Omega^\ast_X.
$$
Such a map is an equivalence if the map $f :X \to Y$ is a diffeomorphism and if $f_\ast$ is an equivalence of curved $L_\infty$ algebras. 
\end{definition}
More details on the theory of $L_\infty$ spaces are presented in \cite{Cos11a}.  

A more standard approach to derived geometry is to work with spaces equipped with a sheaf of differential graded commutative algebras.   The theory of $L_\infty$ spaces is Koszul dual to this more standard approach.  If $(X,\g)$ is an $L_\infty$ space, let $C^\ast(\g)$ be the $\Omega^\ast_X$-linear Chevalley-Eilenberg cochain complex of $\g$.  This forms a sheaf of dg rings on $X$.   

For example, if $X$ is a complex manifold, then it is shown in \cite{Cos11a} that there is an $L_\infty$ space $(X,\g_X)$, with underlying manifold $X$, such that $C^\ast(\g_X)$ is quasi-isomorphic to the sheaf of holomorphic functions on $X$.  More precisely, let $\mscr{J}(\Oo_X)$ be the sheaf of smooth sections of the bundle of jets of holomorphic functions on $X$.  This is a bundle with a flat connection, so that we can define the de Rham complex $\Omega^\ast_X(\mscr{J}(\Oo_X))$.  The sheaf of $L_\infty$ algebras $\g_X$ is constructed so that there is an isomorphism of sheaves of $\Omega^\ast_X$-algebras 
$$
C^\ast(\g_X) \iso \Omega^\ast_X(\mscr{J}(\Oo_X)).
$$

If $(X,\g)$ is an $L_\infty$ space, we can reduce the curved $L_\infty$ algebra $\g$ modulo $\Omega^{> 0}_X$ to get a sheaf $\g^{red}$ of (non-curved) $L_\infty$ algebras over the sheaf $\cinfty_X$ of smooth functions on $X$.  In particular, $\g^{red}$ is a cochain complex over smooth vector bundles on $X$.  In the case that $\g_X$ encodes the complex structure on $X$, $\g_X^{red}$ is the complex tangent bundle $T^{1,0}_X$.  

\subsection{Global mapping problems}
Now let $(M,\A)$ be an elliptic ringed space, and $(X,\g)$ be an $L_\infty$ space.  We are interested in defining a notion of map from $(M,\A)$ to $(X,\g)$.  Such a map will, in particular, be a map of smooth manifolds $\phi : M \to X$. 

If $\phi : M \to X$ is a smooth map, we will let
$$
\phi^\ast \g = \phi^{-1} \g \otimes_{\phi^{-1}\Omega^\ast_X} \A.
$$
Thus, $\phi^\ast \g$ is a curved $L_\infty$ algebra over $\A$, whose curving vanishes modulo the ideal $\mscr{I} \subset \A$. 

\begin{definition}
A map $(M,\A) \to (X,\g)$ consists of the following data.  
\begin{enumerate}
\item A smooth map $\phi : M \to X$.
\item A solution $\alpha$ to the Maurer-Cartan equation in $\phi^\ast \g$, which vanishes modulo the ideal $\mscr{I} \subset \A$.
\end{enumerate}
\end{definition}

As an example,  the following lemma is proved in \cite{Cos11a}.
\begin{lemma}
Let $M$ and $X$ be complex manifolds, and let $\A = \Omega^{0,\ast}_M$ be the Dolbeaut resolution of the structure sheaf of $M$, and let $\g_X$ denote the curved $L_\infty$ algebra over $\Omega^\ast_X$ which encodes the complex structure of $X$.  

Then a map from $(M,\Omega^{0,\ast}_M)$ to $(X,\g_X)$ is the same as a holomorphic map from $M$ to $X$.
\end{lemma}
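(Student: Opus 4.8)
The plan is to push the definition through Chevalley--Eilenberg/Koszul duality, so that a map $(M,\Omega^{0,\ast}_M)\to(X,\g_X)$ becomes a morphism of sheaves of commutative dg algebras, after which the lemma becomes visibly the statement that a holomorphic map of complex manifolds is a map of their (derived) structure sheaves. The key input, from \cite{Cos11a}, is the isomorphism $C^\ast(\g_X)\cong\Omega^\ast_X(\mscr{J}(\Oo_X))$ of sheaves of dg $\Omega^\ast_X$-algebras, where $\mscr{J}(\Oo_X)$ is the bundle of jets of holomorphic functions, placed in cohomological degree $0$, with its flat Grothendieck connection; the resulting de Rham complex is a fine resolution of $\Oo_X$ (and, likewise, $\A=\Omega^{0,\ast}_M$ resolves $\Oo_M$). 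Unwinding the definition, a map consists of a smooth $\phi:M\to X$ together with a Maurer--Cartan element in $\phi^\ast\g_X$ vanishing modulo $\mscr{I}=\Omega^{0,>0}_M$; by the functor-of-points adjunction underlying our definition of a morphism of curved $L_\infty$ algebras, together with the $\phi^{-1}\dashv\phi_\ast$ adjunction, this datum is the same as a morphism of sheaves of dg $\Omega^\ast_X$-algebras $f^\sharp:C^\ast(\g_X)\to\phi_\ast\Omega^{0,\ast}_M$ lying over the canonical map $\Omega^\ast_X\xto{\phi^\ast}\phi_\ast\Omega^\ast_M\twoheadrightarrow\phi_\ast\Omega^{0,\ast}_M$ and compatible with the canonical augmentations, in the sense that the composite $C^\ast(\g_X)\xto{f^\sharp}\phi_\ast\Omega^{0,\ast}_M\to\phi_\ast\cinfty_M$ equals $C^\ast(\g_X)\to C^\ast(\g_X^{red})\to\cinfty_X\xto{\phi^\ast}\phi_\ast\cinfty_M$.

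Feeding in the model $C^\ast(\g_X)\cong\Omega^\ast_X(\mscr{J}(\Oo_X))$, which is generated over $\Omega^\ast_X$ by its degree-$0$ part $\mscr{J}(\Oo_X)$, and noting that $f^\sharp$ preserves degree (so $f^\sharp(\mscr{J}(\Oo_X))\subseteq\phi_\ast\Omega^{0,0}_M=\phi_\ast\cinfty_M$), one sees that $f^\sharp$ is determined by its restriction to $\mscr{J}(\Oo_X)$; and the augmentation condition forces that restriction to be $\phi^\ast\circ\mathrm{val}$, the canonical value-augmentation $\mathrm{val}:\mscr{J}(\Oo_X)\to\cinfty_X$ followed by pullback along $\phi$. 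Consequently $f^\sharp$ -- hence the entire map $(M,\A)\to(X,\g_X)$ -- is completely determined by the smooth map $\phi$, and the only remaining condition is that the $\Omega^\ast_X$-linear extension of $\phi^\ast\circ\mathrm{val}$ be a cochain map. The plan is then to compute this condition in local holomorphic coordinates on $X$, using the explicit formula for the Grothendieck connection on jets (for which the Taylor jets of holomorphic functions are exactly the flat sections). One finds that the failure of $f^\sharp$ to commute with the two differentials is precisely $\bar\partial\phi\in\Omega^{0,1}(M,\phi^\ast T^{1,0}_X)$ -- the standard obstruction to $\phi$ being holomorphic -- so that the condition is $\bar\partial\phi=0$. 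This identifies the set of maps $(M,\A)\to(X,\g_X)$ with the set of holomorphic maps $M\to X$, with no equivalences to be quotiented out.

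The step I expect to be the main obstacle is this last computation together with the preceding identification of the augmentation condition: one must pin down the ``base point modulo $\mscr{I}$'' condition precisely enough to see that it collapses $f^\sharp|_{\mscr{J}(\Oo_X)}$ to the value-map $\phi^\ast\circ\mathrm{val}$ -- in particular that the isomorphism $C^\ast(\g_X)\cong\Omega^\ast_X(\mscr{J}(\Oo_X))$ may be taken compatibly with augmentations -- and then carry out the coordinate computation showing the cochain-map obstruction is exactly $\bar\partial\phi$. The subordinate technical points are the bookkeeping with completed/pro-nilpotent tensor products and continuity for the jet filtration (needed so that an $\Omega^\ast_X$-algebra morphism out of $\Omega^\ast_X(\mscr{J}(\Oo_X))$ is indeed determined by its values on $\mscr{J}(\Oo_X)$), and the precise form of the Koszul-duality dictionary between Maurer--Cartan elements of $\phi^\ast\g_X$ and augmented dg-algebra maps $C^\ast(\g_X)\to\phi_\ast\Omega^{0,\ast}_M$.
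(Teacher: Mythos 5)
The paper itself gives no proof of this lemma --- the sentence immediately before it reads ``the following lemma is proved in \cite{Cos11a}'' --- so there is no in-paper argument to compare yours against. Your Koszul-dual strategy is sound in outline, but it can be flattened considerably. As a graded $\A^\sharp$-module, $\phi^\ast\g_X \cong \Omega^{0,\sharp}_M\otimes_{\cinfty_M}\phi^\ast T_X[-1]$ with $T_X[-1]$ concentrated in degree $1$, and $\mscr I=\Omega^{0,>0}_M$ lives in strictly positive degree, so the degree-$1$ component of $\mscr I\cdot\phi^\ast\g_X$ is zero. The requirement that $\alpha$ have degree $1$ and vanish modulo $\mscr I$ therefore forces $\alpha=0$ outright, and the curved Maurer--Cartan equation collapses to the single statement $l_0=0$, i.e.\ that the curving of $\phi^\ast\g_X$ vanishes --- precisely the reformulation the paper records in the sentence after the lemma. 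This observation also dissolves your pro-nilpotence and augmentation-compatibility worries: a continuous $\cinfty_X$-algebra map $\mscr J(\Oo_X)\to\phi_\ast\cinfty_M$ must kill the augmentation ideal simply because $\cinfty_M$ has no nonzero nilpotents, which recovers $f^\sharp|_{\mscr J(\Oo_X)}=\phi^\ast\circ\mathrm{val}$ with no need to track the compatibility of the jet model with base points.

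The genuine gap --- which you correctly flag but do not fill --- is the identification of the curving $l_0\in\Omega^{0,1}(M,\phi^\ast T^{1,0}_X)$, equivalently the failure of $\phi^\ast\circ\mathrm{val}$, extended $\Omega^\ast_X$-linearly, to intertwine the Grothendieck--de Rham differential on $\Omega^\ast_X(\mscr J(\Oo_X))$ with $\dbar_M$, with the tensor $\dbar\phi$. This local-coordinate computation with the jet bundle and the Grothendieck connection is where the actual content of the lemma lives; everything else in your write-up is Koszul-duality bookkeeping that, while not wrong, is heavier than necessary given the degree argument above. Until the computation is carried out (trivialize $\mscr J(\Oo_X)$ by Taylor jets in holomorphic coordinates on $X$, write $\nabla$ explicitly, and compare $f^\sharp\nabla$ with $\dbar_M f^\sharp$ on a first-order jet), the proposal remains a credible plan rather than a proof.
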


This formalism allows us to write down easily the elliptic $L_\infty$ algebra on a complex manifold $M$ controlling deformations of a fixed holomorphic map $\phi : M \to X$. 

The lemma implies that the curving of $\phi^\ast \g_X$ vanishes precisely when $\phi$ is holomorphic.  Thus, when $\phi$ is holomorphic, $\phi^\ast \g_X$ is a cochain complex of sheaves of $\Omega^{0,\ast}_M$-modules.  There is an isomorphism of dg $\Omega^{0,\ast}_M$-modules
$$
\phi^\ast \g_X \iso \Omega^{0,\ast}(M, \phi^\ast T X [-1] ) .
$$
Further, if $\mscr{J}(\Oo_X)$ denotes bundle of jets of holomorphic functions on $X$, we have an isomorphism of sheaves of differential graded $\Omega^{0,\ast}_M$-modules 
$$
C^\ast(\phi^\ast\g_X) \iso \Omega^{0,\ast}_M(\phi^\ast \mscr{J}(\Oo_X).
$$
Lie algebra cochains of $\phi^\ast \g_X$ are taken linearly over $\Omega^{0,\ast}_M$. 

A Maurer-Cartan element of $\phi^\ast \g_X$ (with coefficients in an Artinian dg ring $(R,m)$ is then the same as a map of $\Omega^{0,\ast}_M$-algebras
$$
\Omega^{0,\ast}(M, \phi^\ast J ( \Oo_X ) ) \to \Omega^{0,\ast}_M \otimes m.
$$
This is the same as a deformation of the holomorphic map $\phi$.

Note that since the $L_\infty$ algebra is $\Omega^{0,\ast}_M$-linear, locally on $M$ the $L_\infty$ structure is encoded by a holomorphically-varying family of $L_\infty$ structures on the holomorphic bundle $\phi^\ast TX[-1]$.  Globally, we can view the $L_\infty$ structure on $\Omega^{0,\ast}(M, \phi^\ast T X[-1])$ as encoding a homotopical version a holomorphic $L_\infty$ structure on the holomorphic bundle $\phi^\ast TX [-1]$.

Note that, for any holomorphic vector bundle $E$ on $X$, $\Omega^\sharp_X \otimes_{\cinfty_X} E$ has the structure of a curved $L_\infty$ module over $\g_X$.  This curved $L_\infty$ structure is characterized up to contractible choice by the property that $C^\ast(\g_X, E)$ coincides with the $\Omega^\ast(X, J(E))$, the de Rham complex of $X$ with coefficients in the $\cinfty$ bundle underlying the bundle of jets of holomorphic sections of $E$.  

It follows that $\Omega^{0,\ast}(M, \phi^\ast E)$ is equipped with the structure of $L_\infty$ module over the $L_\infty$ algebra $\phi^\ast \g_X = \Omega^{0,\ast}(M, T X[-1])$.  

The semi-direct product $L_\infty$ algebra
$$
\Omega^{0,\ast}(M, \phi^\ast TX [-1] \oplus \phi^\ast E[-1]) 
$$
controls deformations of pairs $(\phi, s)$ where $\phi : M \to X$ is a holomorphic map, and $s$ is a section of $\phi^\ast E$ (where we are deforming near $s = 0$). 

\section{Principal bundles on elliptic ringed spaces}

We are also interested in elliptic moduli problems describing principal bundles on elliptic ringed spaces.  For example, a principal bundle $G$-bundle on $M_{dR}$ will be a flat $G$-bundle on $M$; and if $G$ is a complex Lie group and $M$ a complex manifold, a principal $G$-bundle on $M_{\dbar}$ will be a holomorphic $G$-bundle on $M$.   The reader who is happy to accept that there is a reasonable notion of principal bundle on an elliptic ringed space should skip this section.

The definition for a general group is a little involved, so we will start with the definition for $GL(n)$.  
\begin{definition}
Let $(M,\A)$ be an elliptic ringed space over $\R$ or $\C$.  A rank $n$ vector bundle on $M$ is a sheaf $\E$ of dg modules over the dg ring $\A$, which, as a sheaf of graded modules over the sheaf of graded $\A^\sharp$ given by $\A$ without the differential, is locally free of rank $n$. 
\end{definition}
Note if $\E$ is a rank $n$ vector bundle on $(M,\A)$, then $\E / \mscr{I}$ is a locally free sheaf of rank $n$ over $\cinfty_M$, and so define a rank $n$ vector bundle on $M$.  

Let us list some examples.
\begin{enumerate}
\item A vector bundle on $M_{dR}$ is a vector bundle on $M$ with a flat connection,
\item If $M$ is a complex manifold, a vector bundle on $M_{\dbar}$ is a holomorphic vector bundle on $M$.
\item If $M$ is again a complex manifold, a vector bundle on $T[1] M_{\dbar}$ is a Higgs bundle on $M$.  Indeed, the sheaf of algebras on $M$ describing $T[1] M_{\dbar}$ is $\Omega^{\ast,\ast}_M$ equipped with the differential $\dbar$. If $\E$ is a vector bundle on $T[1] M_{\dbar}$, then $\E$ is isomorphic to $\Omega^{\ast,\ast}(M,V)$ for some rank $n$ holomorphic vector bundle on $M$; but with a differential of the form $\dbar + \phi$, where $\phi \in \Omega^{1,0}(M, \op{End}(V))$.  The condition that the differential squares to zero means that $\dbar \phi = 0$ and $[\phi,\phi] = 0$.  
\end{enumerate}

\subsection{}
Let us now discuss the definition of a general principal $G$-bundle on an elliptic ringed space $(M,\A)$.  Because of lack of space, I will be a little terse.

To motivate our definition, let us recall the definition of a connection and of a flat connection on a principal $G$-bundle on a manifold.

In what follows, if $\pi : P \to M$ is a principal $G$-bundle, and $\mscr{E}$ is a sheaf on $M$ of modules over $\cinfty_M$, we use the notation $\pi^\ast \mscr{E}$ to denote the sheaf of $\cinfty_P$-modules
$$
\pi^\ast \mscr{E} = \pi^{-1} \mscr{E} \otimes_{\pi^{-1} \cinfty_M} \cinfty_P.
$$
\begin{definition}
Let $G$ be a real Lie group. Let $P \to M$ be a principal $G$-bundle on a manifold $M$.  Then a connection on $P$ is a $G$-equivariant and $\cinfty_P$-linear map
$$
\eta: \Omega^1_P \to \pi^\ast \Omega^1_M
$$
which splits the natural map
$$
\pi^\ast \Omega^1_M \to \Omega^1_P.
$$
\end{definition}
By composing with the de Rham differential on $P$, such a connection induces a derivation $\d_\eta$ on the bundle of graded algebras $\pi^\ast \Omega^\ast_M$, by the formula
$$
\d_\eta ( f \otimes \omega) = \eta(\d f ) \wedge \omega + f \d \omega
$$
for a local section $\omega$ of $\pi^{-1} \Omega^\ast_M$ and $f$ of $\cinfty_P$. 
\begin{definition}
A connection $\eta$ is flat if $\d_\eta^2 = 0$. 
\end{definition}
We will adapt this definition to define the notion of principal bundle on an elliptic ringed space $(M,\A)$.  Thus, suppose $\pi : P \to M$ is a principal $G$-bundle, and $(M,\A)$ is an elliptic ringed space over $\R$.

Since $\A$ is a sheaf of algebras over $\Omega^\ast_M$, each graded component $\A^i$ is a sheaf of modules for $\cinfty_M$.  Thus, we can define a sheaf of $\cinfty_P$-modules
$$
\pi^\ast \A^i = \pi^{-1} \A^i \otimes_{\pi^{-1} \cinfty_M} \cinfty_P. 
$$
Note that the natural map $\Omega^1_M \to \A^1$ induces a map $\pi^\ast \Omega^1_M \to \pi^\ast \A^1$.
\begin{definition}
If $G$ is a real Lie group, and $P \to M$ is a principal $G$-bundle on $M$, then an $\A$-connection on $P$ is a $G$-equivariant $\cinfty_P$-linear map
$$
\eta: \Omega^1_P \to \pi^\ast \A^1
$$
whose restriction to the subsheaf $\pi^\ast \Omega^1_M$ is the natural map $\pi^\ast \Omega^1_M \to \pi^\ast \A^1$.
\end{definition}
Note that such an $\A$-connection on $P$ induces a differential operator $\d_\eta : \cinfty_P \to \pi^\ast \A^1$, obtained by composing the de Rham differential on $P$ with $\eta$.  This operator extends uniquely to a derivation $\d_\eta$ of the sheaf of graded algebras $\pi^\ast \A$ by the Leibniz rule
$$
\d_\eta ( f \otimes a ) = \eta( \d f ) a + f \d_\A a
$$
for all local sections $f$ of $\cinfty_P$ and $a$ of $\pi^{-1} \A^i$. Here $\d_\A$ indicates the differential on $\A$. 

\begin{definition}
An $\A$-connection on $P$ is flat if the derivation $\d_\eta$ of $\pi^\ast \A$ is of square zero.  
\end{definition}
Note that in this situation, $\pi^\ast \A$, with the differential $\d_\eta$, is a sheaf of differential graded algebras over the sheaf of dgas $\pi^{-1} \A$.  

If the bundle $P$ is trivialized, so that $P = M \times G$, then there is a natural flat $\A$-connection on $P$ given by composing the projection map $\Omega^1_P \to \pi^\ast \Omega^1_M$ with the natural map $\Omega^1_M \to \A$.  In this case, we  can identify $\pi^\ast \A$ as
$$
\pi^\ast \A = \A \boxtimes \cinfty_G.
$$
and the operator $\d_\eta$ is defined by
$$
\d_\eta (a \boxtimes f ) = (\d_\A f) \boxtimes g. 
$$

\begin{definition}
Let $(M,\A)$ be an elliptic ringed space over $\R$, and let $G$ be a real Lie group.   Then a principal $G$-bundle on $(M,\A)$ is a principal $G$-bundle on $M$ equipped with a flat $\A$-connection.
\end{definition}
Recall that $M_{dR}$ denotes the elliptic ringed space $(M,\Omega^\ast_M)$.  It is clear from the definition that a principal $G$-bundle on $M_{dR}$ is the same thing as a $G$-bundle on $M$ with a flat connection. 

\subsection{}
The definition is slightly different in the cases when $G$ is complex.  Let $(M,\A)$ be an elliptic ringed space over $\C$, and let $\pi : P \to M$ be a principal bundle for a complex Lie group $G$.   We will let $\Oo_P$ denote the sheaf of smooth functions on $P$ which are holomorphic on each fibre, and we will let $\Omega^1_{P,\dbar}$ denote the sheaf of $1$-forms on $P$ which are holomorphic $(1,0)$ forms when restricted to each fibre.  We will let $\Omega^k_{P,\dbar}$ denote the $\Oo_P$-linear exterior power of the sheaf $\Omega^1_{P,\dbar}$.  Finally, we will let $\Omega^\ast_{P,\dbar}$ denote the de Rham complex built from the sheaves $\Omega^k_{P,\dbar}$. 

If $\E$ is a sheaf on $M$ of $\cinfty_M$ modules, we will let $\pi^\ast \E$ denote the sheaf of $\Oo_P$ modules
$$
\pi^\ast \E = \pi^{-1} \E \otimes_{\pi^{-1} \cinfty_M} \Oo_P.
$$
Thus, in particular, we have sheaves $\pi^\ast \A^i$ of $\Oo_P$ modules. 

\begin{definition}
If $G$ is a complex Lie group, $(M,\A)$ is an elliptic ringed space over $\C$, and $P \to M$ is a principal $G$-bundle, then an $\A$-connection on $P$ is a $G$-equivariant $\Oo_P$-linear map
$$
\eta :  \Omega^1_{P,\dbar} \to \pi^\ast \A^1
$$
whose restriction to $\pi^\ast \Omega^1_M$ is the natural map $\pi^\ast \Omega^1_M \to \pi^\ast \A^1$.

Such a connection $\eta$ is flat if the derivation $\d_\eta$ on $\pi^\ast \A$ constructed as before has square zero.   
\end{definition}
\begin{definition}
If $G$ is a complex Lie group, a principal $G$-bundle on a complex elliptic ringed space $(M,\A)$ is a principal $G$-bundle on $M$ equipped with a flat $\A$-connection. 
\end{definition}
\begin{lemma}
If $M$ is a complex manifold, then a principal $G$-bundle on the elliptic ringed space $M_{\dbar} = (M,\Omega^{0,\ast}_M)$ is the same as a holomorphic principal $G$-bundle on $M$. 
\end{lemma}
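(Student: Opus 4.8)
The plan is to show that a flat $\A$-connection on a smooth principal $G$-bundle $\pi\colon P\to M$ is exactly an integrable $\dbar$-operator on $P$, and then to appeal to the (principal-bundle form of the) Koszul--Malgrange theorem, which identifies such operators with holomorphic structures on $P$ refining its smooth structure. Conceptually, an $\A$-connection $\eta\colon\Omega^1_{P,\dbar}\to\pi^\ast\Omega^{0,1}_M$ which restricts to the tautological projection on $\pi^\ast\Omega^1_M$ dualizes to a $G$-invariant sub-bundle $D\subset T_{\C}P$ containing $\mathcal V^{0,1}$, the anti-holomorphic part of the (complex) vertical tangent bundle $\ker d\pi$, and mapping isomorphically onto $T^{0,1}M$ under $d\pi$; equivalently, to a $G$-invariant almost complex structure on the total space $P$ for which $\pi$ is pseudo-holomorphic, the fibres are complex-analytic, and the $G$-action is holomorphic. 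Flatness of $\eta$ --- the vanishing of $\d_\eta^2$ --- corresponds to integrability of $D$, whereupon Newlander--Nirenberg promotes $P$ to a complex manifold with $\pi$ and the $G$-action holomorphic, i.e.\ to a holomorphic principal $G$-bundle; conversely a holomorphic $G$-bundle furnishes such an $\eta$ by taking $D=T^{0,1}P$. So the two things to check are (a) that flatness is equivalent to integrability, and (b) that isomorphisms match up on the two sides.

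For (a) I would argue in a local smooth trivialization $P|_U\cong U\times G$ over a coordinate ball $U\subset M$. There $P|_U$ carries a distinguished flat $\A$-connection $\eta_0$ --- project $\Omega^1_U=\Omega^{1,0}_U\oplus\Omega^{0,1}_U$ onto $\Omega^{0,1}_U$ and kill the fibrewise-holomorphic $1$-forms --- whose derivation $\d_{\eta_0}$ on $\pi^\ast\A|_U\cong\Omega^{0,\ast}(U,\Oo_G)$ is $\dbar_U\otimes 1$ and visibly squares to zero. Any $\A$-connection $\eta$ on $P|_U$ agrees with $\eta_0$ on $\pi^\ast\Omega^1_M$ by definition, so by $G$-equivariance and $\Oo_P$-linearity the difference $\eta-\eta_0$ is encoded by an element $A\in\Omega^{0,1}(U,\g)$, and unwinding the Leibniz rule that defines $\d_\eta$ shows that $\d_\eta=\dbar+A$ on $\Omega^{0,\ast}(U,\Oo_G)$, where $A$ acts through the infinitesimal generators of the $G$-action on $\Oo_G$. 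A short computation identifies $\d_\eta^2$ with the operator of wedging with the $\g$-valued $(0,2)$-form
$$\dbar A+\tfrac12[A,A],$$
so the flat $\A$-connections on $P|_U$ are precisely the Maurer--Cartan elements of the Dolbeault dg Lie algebra $\Omega^{0,\ast}(U,\g)$, i.e.\ the integrable $\dbar$-operators $\dbar+A$ on the trivial $G$-bundle over $U$; by Koszul--Malgrange these are exactly the holomorphic structures on $U\times G$ refining its smooth structure, with $\eta_0\leftrightarrow A=0$ the standard one.

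For (b), and to globalize, note that an isomorphism of principal $G$-bundles on $(M,\A)$ over $U$ --- an automorphism of $U\times G$ intertwining two flat $\A$-connections --- is a map $g\colon U\to G$ conjugating $\dbar+A$ to $\dbar+A'$, which is precisely the condition that $g$ be a holomorphic isomorphism of the associated holomorphic bundles; hence the local groupoids of flat $\A$-connections and of holomorphic structures are equivalent, compatibly with restriction. Gluing then completes both directions: given a flat $\A$-connection on $P$, the local holomorphic structures agree on overlaps because the smooth transition functions intertwine the restricted flat $\A$-connections and so are holomorphic, making $P$ a holomorphic $G$-bundle on $M$; conversely a holomorphic $G$-bundle carries the patched-together standard connections $\eta_0$, a flat $\A$-connection on the underlying smooth bundle. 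These constructions are mutually inverse and compatible with isomorphisms, which is the claimed equivalence. I expect the main obstacle to be the analytic input --- invoking Newlander--Nirenberg, equivalently the principal-bundle Koszul--Malgrange theorem, in the right $G$-equivariant relative form, and checking that the almost complex structure (or $\dbar$-operator) built from $\eta$ is integrable exactly when $\d_\eta^2=0$ --- together with the bookkeeping needed to see that an abstract isomorphism of flat $\A$-connections really does unwind, fibrewise, to an honest holomorphic gauge transformation.
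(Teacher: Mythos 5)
Your proof is correct but takes a genuinely different route from the paper, and in fact treats the harder direction more carefully. The paper only argues the direction from a holomorphic bundle to a flat $\A$-connection, and does so sheaf-theoretically: it observes that for a holomorphic bundle $\pi: P \to M$ the operator $\dbar$ on $P$ sends a fibrewise-holomorphic function $f \in \Oo_P$ into $\pi^\ast \Omega^{0,1}_M$, and then uses the identification of $\Omega^1_{P,\dbar}$ with the K\"ahler differentials of $\Oo_P$ together with the universal property of K\"ahler differentials to produce the $\Oo_P$-linear map $\eta: \Omega^1_{P,\dbar} \to \pi^\ast \Omega^{0,1}_M$. Flatness of this $\eta$ is then just $\dbar^2 = 0$, and the converse is dismissed as ``straightforward.'' You instead dualize $\eta$ to a distribution on $T_\C P$, work in a local smooth trivialization, identify $\A$-connections relative to a base point $\eta_0$ with $\g$-valued $(0,1)$-forms $A$, compute $\d_\eta^2$ as wedging with $\dbar A + \tfrac12[A,A]$, and reduce the converse to Koszul--Malgrange / Newlander--Nirenberg. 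The paper's argument is shorter and more functorial, but it sweeps the genuine analytic content of the converse under ``straightforward''; your version makes that content explicit and also explains why the Maurer--Cartan picture (which the paper develops a few paragraphs later as a separate Lemma about deformations of a flat $\A$-connection) is exactly what makes flatness equivalent to integrability. Both approaches are sound; the one in the paper buys brevity for the forward direction, yours buys a complete and self-contained equivalence at the cost of a local computation and an appeal to the integrability theorem.
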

\begin{proof}
$P \to M$ is a bundle where the fibre and the base are both complex manifolds.  Suppose that we have a complex structure on the total space $P$ which is $G$-equivariant and compatible with the complex structures on the fibre and the base.

Let $\Omega^1_{P,hol}$ be the sheaf of holomorphic $1$-forms on $P$, and, as before, let $\Omega^1_{P,\dbar}$ be the sheaf of $1$-forms which are holomorphic $1,0$ forms on each fibre.  

Recall that we use the notation $\Oo_P$ to denote the sheaf of functions on $P$ which are holomorphic on all fibres.  If $f \in \Oo_P$ is a local section, then the complex structure on $P$ allows us to define $\dbar f \in \Omega^{0,1}_P$.  Since $f$ is holomorphic on fibres, $\dbar f$ will actually land in the sheaf $\pi^\ast \Omega^{0,1}_M \subset \Omega^{0,1}_P$.  

Thus, a complex structure on $P$ induces a differential operator 
$$
\Oo_P \to \pi^\ast \Omega^{0,1}_M.
$$
The sheaf $\Omega^1_{P,\dbar}$ of $1$-forms on $P$ which are holomorphic $(1,0)$-forms on each fibre is the sheaf of K\"ahler differentials of $\Oo_P$.  The universal property of K\"ahler differentials thus gives us an $\Oo_P$-linear map 
$$
\Omega^1_{P,\dbar} \to \pi^\ast \Omega^{0,1}_M
$$
and so an $\A$-connection as desired. Of course, since $\dbar^2 = 0$ on $P$, this $\A$-connection is flat.

The converse is straightforward.
\end{proof}

\subsection{}
Let $(M,\A)$ be an elliptic ringed space over the field $\mbb{F}$, which is either $\R$ or $\C$. Given a principal $G$-bundle $(P,\eta) \to (M,\A)$, and a representation $V$ of $G$. one can define an associated sheaf $V_{P,\A}$ of $\A$-modules on $M$ as follows.   We give $\pi^\ast \A$ the differential $\d_\eta$.  Then, $\pi^\ast \A \otimes_{\mbb{F}} V$ is a $G$-equivariant sheaf of $\pi^{-1} \A$ modules on $V$.  We can then define a sheaf $V_{P,\A}$ on $U$ by defining
$$
V_{P,\A}(U) =  \left( \pi^\ast \A (U) \otimes_{\mbb{F}} V \right)^G. 
$$
In this way, for example, we construct a sheaf $\A( \g_P )$ of dg Lie algebras on $M$, over the dg algebras $\A$, associated to the adjoint representation of  $G$.  Note that, if $V_P$ denotes the sheaf of $\cinfty_M$ modules of sections of the adjoint vector bundle on $M$ associated to $P$, then $V_{P,\A}$ is isomorphic to $\A \otimes_{\cinfty_M} V_P$ equipped with a differential coming from $\eta$.  

\begin{lemma}
In this situation, to give a deformation of the flat $\A$-connection $\eta$ on the fixed principal $G$-bundle $P$ is the same as to give a Maurer-Cartan element
$$
\alpha \in (\A( \g_P ))^1.
$$ 
\end{lemma}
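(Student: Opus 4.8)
The plan is to mimic, for a general elliptic ringed space, the treatment of deformations of a flat $G$-bundle given earlier in the flat-bundle example. I will work in the real case; in the complex case one makes the evident substitutions ($\Omega^\ast_P$ becomes $\Omega^\ast_{P,\dbar}$, and so on). As in the flat-bundle discussion, "a deformation" should be read concretely as: for a dg Artin ring $(R,\mathfrak m)$, an $R$-linear flat $\A$-connection on $P$ reducing to $\eta$ modulo $\mathfrak m$. The goal is to produce a bijection between such data and Maurer-Cartan elements of $\A(\g_P)\otimes\mathfrak m$, and then to note that equivalences of deformations correspond to gauge equivalences, so that one actually obtains an equivalence of formal moduli problems; for the statement of the lemma it suffices to treat the bijection with $\mathfrak m$ absent, i.e.\ to identify flat $\A$-connections near $\eta$ with Maurer-Cartan elements of $(\A(\g_P))^1$.

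First I would observe that the set of $\A$-connections on $P$ is a torsor over $(\A(\g_P))^1$. Given two $\A$-connections $\eta,\eta'$, their difference $\alpha:=\eta'-\eta$ is $G$-equivariant and $\cinfty_P$-linear and kills the subsheaf $\pi^\ast\Omega^1_M$ (both connections restrict there to the same natural map), so it factors through the vertical cotangent sheaf $\Omega^1_P/\pi^\ast\Omega^1_M$. The fundamental vector fields of the $G$-action trivialize this sheaf $G$-equivariantly as $\cinfty_P\otimes\g^\vee$, so $\alpha$ becomes a $G$-equivariant $\cinfty_P$-linear map $\cinfty_P\otimes\g^\vee\to\pi^\ast\A^1$, that is, an element of $(\pi^\ast\A^1\otimes\g)^G=(\A(\g_P))^1$; conversely every such $\alpha$ yields an $\A$-connection $\eta+\alpha$. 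This step is routine and is the direct analogue of the fact that connections on an ordinary principal bundle form a torsor over $\Omega^1(M,\g_P)$.

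Next I would translate flatness of $\eta'=\eta+\alpha$ into the Maurer-Cartan equation. Comparing the defining formulas for $\d_{\eta'}$ and $\d_\eta$ gives $\d_{\eta'}=\d_\eta+D_\alpha$, where $D_\alpha$ is the unique $\pi^{-1}\A$-linear degree-$1$ derivation of $\pi^\ast\A$ sending $f\in\cinfty_P$ to the contraction of $\alpha$ with the vertical derivative of $f$. The crucial input, which I would isolate as a sublemma, is that $\alpha\mapsto D_\alpha$ identifies $(\A(\g_P))^\bullet$ with the graded Lie algebra of $G$-equivariant, $\pi^{-1}\A$-linear, vertical derivations of $\pi^\ast\A$, carrying the bracket of $\A(\g_P)$ to the graded commutator and the $\eta$-induced differential of $\A(\g_P)$ to $[\d_\eta,-]$. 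Granting this, and using that $\alpha$ has odd cohomological degree,
$$
\d_{\eta'}^{\,2}=[\d_\eta,D_\alpha]+D_\alpha^2=D_{\d_\eta\alpha}+\tfrac12 D_{[\alpha,\alpha]}=D_{\d_\eta\alpha+\frac12[\alpha,\alpha]},
$$
and since $\alpha\mapsto D_\alpha$ is injective, $\eta+\alpha$ is flat precisely when $\d_\eta\alpha+\tfrac12[\alpha,\alpha]=0$. Running the same computation $R$-linearly gives the statement for families, and a bundle automorphism of $P$ covering $\op{id}_M$ and respecting the $\A$-structure acts on flat $\A$-connections exactly as the gauge group integrating $(\A(\g_P))^0$ acts on Maurer-Cartan elements, which upgrades the bijection to an equivalence of formal moduli problems.

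The main obstacle is the sublemma invoked above: identifying the dg Lie algebra $\A(\g_P)$ with the commutator algebra of vertical derivations of $\pi^\ast\A$ with the correct graded signs, and in particular verifying $D_\alpha^2=\tfrac12 D_{[\alpha,\alpha]}$ with the right factor of $\tfrac12$. I would prove it by reducing to a local trivialization $P|_U\cong U\times G$, where $\pi^\ast\A=\A\boxtimes\cinfty_G$, the fundamental vector fields are the invariant vector fields on $G$, the vertical derivative is the de Rham differential on $G$ composed with the invariant trivialization of $T^\ast G$, and $\g_P$ is canonically trivial; there the identity collapses to the classical $(\d+\omega\wedge)^2=\d\omega+\tfrac12[\omega,\omega]$ for a $\g$-valued $1$-form $\omega$, the Lie bracket of $\g$ entering through the structure constants of the invariant vector fields. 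Since every operation involved is built $\eta$-equivariantly from intrinsic data, the local formulas are independent of the trivialization and glue to the global statement. Everything else — the torsor statement and the gauge-equivalence bookkeeping — is routine and already appears in the flat-bundle example.
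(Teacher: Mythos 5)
Your proposal is correct and follows the same route as the paper's proof: identify the torsor structure of $\A$-connections over $(\A(\g_P))^1$ (the paper passes through $\Omega^1_\pi$ to reach the same coadjoint-bundle identification you get via fundamental vector fields), and then verify that flatness of $\eta + \alpha$ is the Maurer-Cartan equation. The only difference is that the paper dismisses the second step with "it is straightforward to calculate," whereas you spell out the derivation-algebra sublemma and the local reduction to $(\d + \omega\wedge)^2 = \d\omega + \tfrac12[\omega,\omega]$ — a genuine and welcome expansion of what the paper leaves implicit, but not a different argument.
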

\begin{proof}
I will give the proof for the real case; the complex case is similar.  Suppose that $\eta'$ is another $\A$-connection on the principal bundle $P \to M$.  Then, $\eta' - \eta$ is a $G$-equivariant $\cinfty_P$ linear map
$$
\Omega^1_{P} \to \pi^\ast \A^1
$$
which is zero on $\pi^\ast \Omega^1_M$.  Thus, if $\Omega^1_\pi$ refers to the sheaf relative $1$-forms for the map $\pi : P \to M$, $\eta' - \eta$ is a $G$-equivariant map
$$
\Omega^1_\pi \to \pi^\ast \A^1.
$$
This is the same as a $\cinfty_M$-linear map
$$
\g^\vee_P \to \A^1
$$
where $\g^\vee_P$ is the sheaf of sections of the co-adjoint vector bundle on $M$ associated to $P$.  

Thus, we have seen that $A$-connections form a torsor for $\A^1 \otimes \g_P$.  It is straightforward to calculate that the condition for an $\A$-connection to be flat is the same Maurer-Cartan equation in the dg Lie algebra $\A( \g_P )$.  (Recall that $\A( \g_P ) = \g_P \otimes_{\cinfty_M} \A$ with a differential coming from $\eta$).   
\end{proof}

From this observation, we see that to every principal $G$-bundle $(P,\eta)$ on $(M,\A)$, we can construct an elliptic $L_\infty$ algebra $\A( \g_P )$, and that this elliptic $L_\infty$ algebra controls the deformations of $(P,\eta)$.  We have already seen special cases of this construction. When $\A = \Omega^\ast_M$, we have seen that the elliptic $L_\infty$ algebra $\Omega^\ast(M, \g_P$ controls deformations of a flat principal $G$ bundle $P$, and that in the complex case, $\Omega^{0,\ast}(M, \g_P)$ controls deformations of a holomorphic principal $G$ bundle $P$. 

Of course, the statement that the elliptic $L_\infty$ algebra $\A(\g_P)$ controls deformations of the principal $G$-bundle $(P,\eta)$ requires proof. Since the proof is identical to the proof of the more familiar statements concerning flat $G$-bundles or holomorphic $G$-bundles, I will omit it.  Alternatively, since in this paper we are mostly interested in formal moduli problems, the reader can simply take the Maurer-Cartan moduli problem associated to $\A(\g_P)$ as a definition of the formal moduli space of $G$-bundles on $(M,\A)$.  

\subsection{}
Finally, I will briefly consider one further example.  Let $M$ be a complex manifold, and let $E$ be a vector bundle on $M$.  Let us consider the complex elliptic ringed space $T[1] M_{\dbar}$ with sheaf of algebras $\A = \Omega^{\ast,\ast}(M)$, with differential $\dbar$. The following lemma is easy to verify from the above discussion. 
\begin{lemma}
A principal $G$-bundle on $T[1]M_{\dbar}$ is the same as a Higgs bundle on $M$, that is, a holomorphic principal $G$-bundle $P$ on $M$ together with an element
$$
\phi \in \Omega^{1,0}(M, \g_P ) 
$$
satisfying 
\begin{align*}
\dbar \phi &= 0\\\
[\phi, \phi] &= 0 .
\end{align*}
\end{lemma}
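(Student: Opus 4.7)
The plan is to imitate the proof of the preceding lemma (which handled $M_{\dbar}$) and then use the bigrading of $\A = \Omega^{\ast,\ast}_M$ to split off an extra piece of data. Let $(P, \eta)$ be a principal $G$-bundle on $T[1] M_{\dbar}$, so $\pi : P \to M$ is a principal $G$-bundle and $\eta : \Omega^1_{P,\dbar} \to \pi^\ast \A^1$ is a flat $\A$-connection, with $\A^1 = \Omega^{1,0}_M \oplus \Omega^{0,1}_M$. First I would extract the holomorphic structure on $P$: the projection of sheaves of dg algebras $\A = \Omega^{\ast,\ast}_M \twoheadrightarrow \Omega^{0,\ast}_M$ obtained by killing the differential ideal generated by $\Omega^{1,0}_M$ turns $\eta$ into a flat $\Omega^{0,\ast}$-connection on $P$. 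By the preceding lemma this data is exactly a holomorphic structure on $P$, producing a holomorphic principal $G$-bundle $P_{hol}$.

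Next I would extract the Higgs field from the $(1,0)$-part of $\eta$. The torsor lemma just above identifies the set of flat $\A$-connections on the fixed $G$-bundle $P$ as a torsor over $\A^1 \otimes \g_P = \Omega^{1,0}(M, \g_P) \oplus \Omega^{0,1}(M, \g_P)$; the $(0,1)$-part has already been consumed by the choice of holomorphic structure on $P$, and the remaining freedom is an element $\phi \in \Omega^{1,0}(M, \g_P)$. Concretely, one picks a reference $\A$-connection $\eta_0$ whose $(1,0)$-part vanishes (constructed locally from a holomorphic trivialisation of $P_{hol}$) and sets $\phi := \eta - \eta_0$. The same lemma then identifies flatness of $\eta$ with the Maurer-Cartan equation for $\phi$ in the elliptic $L_\infty$ algebra $\A(\g_P) = \Omega^{\ast,\ast}(M, \g_P)$ with differential $\dbar$,
\[
\dbar \phi + \tfrac{1}{2}[\phi,\phi] = 0.
\]
Because $\phi$ sits in pure bidegree $(1,0)$, the summand $\dbar \phi$ lives in $\Omega^{1,1}(M, \g_P)$ while $[\phi,\phi]$ lives in $\Omega^{2,0}(M, \g_P)$; these bidegrees are disjoint, so each piece must vanish independently. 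This yields exactly $\dbar \phi = 0$ and $[\phi,\phi] = 0$, the defining equations of a Higgs bundle. The converse direction assembles $\eta$ from $(P_{hol}, \phi)$ by running the same recipe backwards and using the same bidegree split to verify flatness.

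The one delicate point is the globalisation of the reference $\A$-connection $\eta_0$ adapted to the chosen holomorphic structure on $P$: one needs to check that such an $\eta_0$ exists on $P$ up to gauge, so that $\phi = \eta - \eta_0$ is well-defined as an element of $\Omega^{1,0}(M, \g_P)$ (rather than a gerby class). In essence this is the statement that a holomorphic principal $G$-bundle carries a canonical (up to gauge) extension of its $\dbar$-operator to an $\A$-connection with vanishing $(1,0)$-part, which can be verified by a local-to-global patching argument parallel to the one used to prove the preceding lemma. Once this is in hand, the bidegree decomposition of the Maurer-Cartan equation is routine and the two Higgs equations emerge independently.
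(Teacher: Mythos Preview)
Your approach is correct and matches what the paper intends: the paper gives no proof of this lemma, merely saying it is ``easy to verify from the above discussion,'' and your argument is precisely the verification the paper has in mind, using the preceding lemma on $M_{\dbar}$ together with the torsor lemma and the bidegree splitting.

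The point you flag as ``delicate'' --- globalising the reference $\A$-connection $\eta_0$ with vanishing $(1,0)$-part --- is in fact not delicate at all, and resolving it clarifies the whole argument. The $\Omega^\ast_M$-algebra structure on $\A = \Omega^{\ast,\ast}_M$ is \emph{not} the identity: for $(\A,\dbar)$ to be a dga over $(\Omega^\ast_M,d)$ the structure map must intertwine $d$ with $\dbar$, and the only such map is the composite $\Omega^\ast_M \twoheadrightarrow \Omega^{0,\ast}_M \hookrightarrow \Omega^{\ast,\ast}_M$ (this is exactly how example~4 in the list of elliptic ringed spaces is set up, with $E = T_M$). Consequently the ``natural map'' $\Omega^1_M \to \A^1$ lands entirely in $\Omega^{0,1}_M$, so the defining constraint on $\eta$ says nothing about its $(1,0)$-component on $\pi^\ast\Omega^1_M$ beyond forcing it to vanish there. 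Hence $\eta^{1,0}$ descends canonically to a $G$-equivariant map $\Omega^{1,0}_\pi \to \pi^\ast\Omega^{1,0}_M$, i.e.\ to an element $\phi \in \Omega^{1,0}(M,\g_P)$, with no choice involved. Equivalently, the $\A$-connection $\eta_0$ with $\eta_0^{0,1}$ given by the holomorphic structure and $\eta_0^{1,0} = 0$ is a perfectly valid, globally defined, flat $\A$-connection; so your torsor argument runs with this canonical base point, and the patching concern evaporates. After this correction your bidegree splitting of the Maurer-Cartan equation goes through exactly as you wrote.
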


\addcontentsline{toc}{part}{Field theories}
\section{The classical Batalin-Vilkovisky formalism}
Before I get to giving a definition of a perturbative classical field theory in the language of elliptic $L_\infty$ algebras, I will explain a little about the general Batalin-Vilkovisky formalism for classical field theories.  

Let us start by discussing the classical Batalin-Vilkovisky formalism in a finite-dimensional toy model (which we can think of as a $0$-dimensional classical field theory).    Our model for the space of fields is a finite-dimensional smooth manifold manifold $M$.  The ``action functional'' is given by a smooth function $S \in \cinfty(M)$.      Classical field theory is concerned with solutions to the equations of motion.  In our setting, the equations of motion are given by the subspace $\op{Crit}(S) \subset M$.  Our toy model will not change if $M$ is a smooth algebraic variety or a complex manifold, or indeed a smooth formal scheme.  Thus we will write $\Oo(M)$ to indicate whatever class of functions (smooth, polynomial, holomorphic, power series) we are considering on $M$.

If $S$ is not a nice function, then this critical set can be highly singular.  The classical Batalin-Vilkovisky formalism tells us to take, instead the \emph{derived} critical locus of $S$.     (Of course, this is exactly what a derived algebraic geometer \cite{Lur09b,Toe06,CioKap01} would tell us to do as well).  

The critical locus of $S$ is the intersection of the graph 
$$\Gamma( \d S) \subset T^\ast M$$
with the zero-section of the cotangent bundle of $M$.  Algebraically, this means that we can write the algebra $\Oo(\op{Crit}(S))$ of functions on $\op{Crit}(S)$ as a tensor product
$$
\Oo(\op{Crit}(S)) = \Oo( \Gamma ( \d S) ) \otimes_{\Oo( T^\ast M ) } \Oo (M) .
$$
Derived algebra geometry tells us that the derived critical locus is obtained by replacing this tensor product with a derived tensor product.  Thus, the derived critical locus of $S$ (which we denote $\op{Crit}^h(S)$ is an object such that
$$
\Oo(\op{Crit}^h(S)) = \Oo( \Gamma ( \d S) ) \otimes^{\mbb{L}}_{\Oo( T^\ast M ) } \Oo (M) .
$$
In derived algebraic geometry, as in ordinary geometry, spaces are determined by their algebras of functions.  In derived geometry, however, one allows differential-graded algebras as algebras of functions (normally one restricts attention to differential-graded algebras concentrated in non-positive cohomological degrees). 

We will take this derived tensor product as a definition of $\Oo(\op{Crit}^h(S))$. 
\subsection{}
It is convenient to consider an explicit model for the derived tensor product.  By taking a standard Koszul resolution of $\Oo( M)$ as a module over $\Oo(T^\ast M)$, one sees that $\Oo(\Crit^h(S))$ can be realized as the complex
$$
\Oo(\Crit^h(S)) \simeq \dots \xto{\vee \d S} \Gamma (M, \wedge^2 T M ) \xto{\vee \d S} \Gamma (M, TM ) \xto{\vee \d S} \Oo(M).
$$
In other words, we can identify $\Oo(\Crit^h (S))$ with functions on the graded manifold $T^\ast[-1] M$, equipped with the differential given by contracting with $\d S$.  

Note that 
$$
\Oo( T^\ast[-1] M ) = \Gamma (M ,  \wedge^\ast TM )
$$
has a Poisson bracket of cohomological degree $1$, called the Schouten-Nijenhuis bracket.  This Poisson bracket is characterized by the fact that if $f, g  \in \Oo(M)$ and $X, Y \in \Gamma (M, T M)$, then 
\begin{eqnarray*}
\{X,Y\} = [X, Y] & \{X, f\} =  X f & \{f, g \} = 0 
\end{eqnarray*}
(the Poisson bracket between other elements of $\Oo(T^\ast[-1] M)$ is inferred from the Leibniz rule). 

The differential on $\Oo(T^\ast [-1] M)$ corresponding to that on $\Oo(\Crit^h(S))$ is given by 
$$
\d \phi = \{S, \phi\}
$$
for $\phi \in \Oo( T^\ast [-1] M)$.  

\subsection{}

The derived critical locus of any function is a dg manifold equipped with a symplectic form of cohomological degree $-1$.  We call such an object a \emph{$0$-symplectic} dg manifold.     In the Batalin-Vilkovisky formalism, the space of fields always has such a symplectic structure.  However, one does not require that the space of fields arises as the derived critical locus of a function.

\section{Classical field theories}

We would like to consider classical field theories in the BV formalism.  For us, such a classical field theory is specified by a $0$-symplectic elliptic moduli problem (that is, equipped with a symplectic form of cohomological degree $-1$).

We defined the notion of formal elliptic moduli problem on a manifold $M$ using the language of $L_\infty$ algebras.  Thus, in order to give the definition of a classical field theory, we need to understand the following question: what extra structure on an $L_\infty$ algebra $\g$ endows the corresponding formal moduli problem with a symplectic form?

The answer to this question was given by Kontsevich \cite{Kon93}.  Given a pointed formal moduli problem $\mc{M}$, the associated $L_\infty$ algebra $\g_{\mc{M}}$ has the property that
$$
\g_{\mc{M}} = T_p \mc{M} [-1].
$$
Further, we can identify geometric objects on $\mc{M}$ in terms of $\g_{\mc{M}}$ as follows.

\begin{quote}
\begin{tabular}{c c}
$C^\ast(\g_{\mc{M}})$ & The algebra $\Oo(\mc{M})$ of functions on $\mc{M}$ \\
$\g_{\mc{M}}$-modules  & $\Oo_{\mc{M}}$-modules \\
$C^\ast(\g_{\mc{M}}, V )$ & the $\Oo_{\mc{M}}$ module $\Gamma(\mc{M}, V)$\\
The $\g_{\mc{M}}$-module $g_{\mc{M}}[1]$ & $T \mc{M}$ 
\end{tabular}
\end{quote}
Following this logic, we see that the complex of two-forms on $\mc{M}$ can be identified with $C^\ast(\g_{\mc{M}}, \wedge^2 ( \g_{\mc{M}}^\vee [-1] ) )$. 

However, on a symplectic formal manifold, one can always choose Darboux coordinates.   Changes of coordinates on $\mc{M}$ correspond to $L_\infty$ isomorphisms on $\g_{\mc{M}}$.   In Darboux coordinates, the symplectic form has constant coefficients, and thus can be viewed as a $\g_{\mc{M}}$-invariant element of $\wedge^2 ( \g_{\mc{M}}^\vee [-1])$. 

Note that the usual Koszul rules of signs imply that
$$
\wedge^2 (\g_{\mc{M}}^\vee [-1]) = \Sym^2 (\g^\vee_{\mc{M}} ) [-2].
$$
To give a $\g_{\mc{M}}$-invariant element of $\Sym^2 (\g^\vee_{\mc{M}} )$ is the same as to give an invariant symmetric bilinear form on $\g_{\mc{M}}$.

Thus, we arrive at the following principle:
\begin{quote}
To give a formal pointed derived moduli problem with a symplectic form of cohomological degree $k$ is the same as to give an $L_\infty$ algebra with a symmetric, invariant, and non-degenerate pairing of cohomological degree $k-2$.  
\end{quote}

We will define a classical field theory to be an elliptic $L_\infty$ algebra equipped with a non-degenerate invariant pairing of cohomological degree $-3$.  Let us first define what it means to have an invariant pairing on an elliptic $L_\infty$ algebra. 
\begin{definition}
Let $M$ be a manifold, and let $E$ be an elliptic $L_\infty$ algebra on $M$.  An invariant pairing on $E$ of cohomological degree $k$ is a symmetric vector bundle map
$$
\ip{-,-}_E : E \otimes E \to \op{Dens}(M) [k]
$$
satisfying some additional conditions.
\begin{enumerate}
\item Non-degeneracy: we require that this pairing induces a vector bundle isomorphism
$$
E \to E^\vee \otimes \op{Dens}(M) [k].
$$
\item Invariance:  let $\E_c$ denotes the space of compactly supported sections of $E$.  The pairing on $E$ induces an inner product on $\E_c$, defined by
\begin{align*}
\ip{-,-} : \E_c \otimes \E_c &\to \R \\
\alpha \otimes \beta & \to \int_M \ip{\alpha,\beta}.
\end{align*}
We require that this is an invariant pairing on the $L_\infty$ algebra $\E_c$. 
\end{enumerate}
\end{definition}
Recall that a symmetric pairing on an $L_\infty$ algebra $\g$ is called invariant if, for all $n$, the linear map
\begin{align*}
\g^{\otimes n+1} &\to \R \\
\alpha_1 \otimes \dots \otimes \alpha_{n+1} & \mapsto \ip{l_n(\alpha_1,\dots,\alpha_n), \alpha_{n+1}} 
\end{align*}
is graded anti-symmetric in the $\alpha_i$. 

\begin{definition}
A formal pointed elliptic moduli problem with a symplectic form of cohomological degree $k$ on a manifold $M$ is an elliptic $L_\infty$ algebra on $M$ with an invariant pairing of cohomological degree $k-2$.
\end{definition}

\begin{definition}
A (perturbative) classical field theory on $M$ in the BV formalism is a formal pointed elliptic moduli problem on $M$ with a symplectic form of cohomological degree $-1$.  
\end{definition}

\subsection{}
Suppose that $\mscr{L}$ is an elliptic $L_\infty$ algebra with an invariant pairing of cohomological degree $k-2$ on a manifold $M$, in the sense described above.  Then, if $M$ is compact, the pairing sets up a quasi-isomorphism between $\mscr{L}(M)$ and the continuous linear dual $\mscr{L}(M)^\vee$, with a shift.    Since the differential on $\mscr{L}$ is elliptic, $\mscr{L}(M)$ has finite dimensional cohomology. Thus, $\mscr{L}(M)$ describes a finite-dimensional formal moduli problem (in the ordinary sense), together with a symplectic form of degree $k$.

One needs to be a little careful in interpreting the invariant pairing on $\mscr{L}$ on non-compact open subsets $U$ of $M$.  If $U$ is such a subset, then the invariant pairing on $\mscr{L}$ \emph{does not} give a quasi-isomorphism between $\mscr{L}(U)$ and its continuous linear dual.

Rather, if $\mscr{L}_c(U)$ denotes the compactly supported sections of $\mscr{L}$, the invariant pairing gives a quasi-isomorphism
$$
\mscr{L}(U) \iso \mscr{L}_c(U)^\vee.
$$

One should interpret this as follows.  $\mscr{L}(U)$ describes a (possibly infinite-dimensional) formal pointed moduli problem, whose tangent complex at the base point is $\mscr{L}(U)$.   Note that $\mscr{L}_c(U) \subset \mscr{L}(U)$ is an $L_\infty$ ideal: any higher bracket at least one of whose inputs is compactly supported will yield a compactly supported section of $\mscr{L}(U)$.

In the dictionary between formal geometry and $L_\infty$ algebras, $L_\infty$ ideals correspond to foliations.  Thus, we see that $\mscr{L}_c(U)$ gives a foliation on the formal moduli problem corresponding to $\mscr{L}(U)$.  Two points of the formal moduli problem $B \mscr{L}(U)$ associated to $\mscr{L}(U)$ are on the same leaf if they coincide outside of a compact set.  

Let us denote the sub-bundle of the tangent bundle of $B \mscr{L}(U)$ corresponding to this foliation as  $T_c B \mscr{L}(U) \subset T B \mscr{L}(U)$.

The pairing between $\mscr{L}(U)$ and $\mscr{L}_c(U)$ gives rise to an isomorphism between $T_c B \mscr{L}(U)$ and the dual to $T B \mscr{L}(U)$.  In other words, $B \mscr{L}(U)$ is equipped with a kind of ``leaf-wise'' symplectic structure, pairing a tangent vectors along a leaf with an arbitrary tangent vector.

This leafwise symplectic structure can also be thought of as a Poisson structure on $B \mscr{L}(U)$ satisfying a weak version of non-degeneracy: namely, that the map $T^\ast B \mscr{L}(U) \to T B \mscr{L}(U)$ arising from the Poisson tensor gives an isomorphism $T^\ast B \mscr{L}(U) \to T_c B \mscr{L}(U)$.  

\section{Quantization of classical field theories}
\label{section_factorization}
In \cite{CosGwi11}, we develop an approach to quantum field theory which starts with the definition of classical field theory given above.  This work has two parts: we give a system of axioms for a quantum field theory, and we have a theorem stating that we can construct quantum field theories starting with a classical field theory by obstruction theory.

Let me sketch the axioms we use.
\begin{definition}
Let $M$ be a manifold.  A \emph{prefactorization algebra} $\mc{F}$ on $M$ is the assignment of a cochain complex $\mc{F}(U)$ to every open subset $U \subset M$, together with product cochain maps maps
$$
m_{U_1,\dots,U_n}^V : \mc{F}(U_1) \otimes \dots \otimes \mc{F}(U_n) \to \mc{F}(V)
$$ 
whenever $U_1,\dots,U_n$ are disjoint open subsets of $V$.  This product map must be independent of the ordering chosen on the $U_i$'s.

In addition, a certain associativity condition must be satisfied.  If $U_{ij}$ (for $i = 1,\dots,n$ and $j = 1,\dots, m_i$) are disjoint opens in $V_i$ and $V_i$ are disjoint opens in $W$, then the diagram
$$
\xymatrix{
\otimes_{i = 1}^n \otimes_{j = 1}^{m_i} \mc{F}(U_{ij}) \ar[r] \ar[d] &  \mc{F}(W) \\
\otimes_{i = 1}^n V_j \ar[ur] &
}
$$
commutes.
\end{definition}
\begin{remark}
A prefactorization algebra can be more succinctly defined to be an algebra over the colored operad whose colors are open subsets of $M$, and where the set of morphisms from a $n$-tuple of colors $\{U_1,\dots,U_n\}$ to $V$ consists of a point if the $U_i$ are disjoint in $V$, and is empty otherwise. 
\item A factorization algebra is a prefactorization algebra satisfying a certain ``local-to-global'' property, saying that the value of $\mc{F}$ on a large open subset $V$ is determined (in a specified way) by the values of $\mc{F}$ on the sets in a sufficiently fine cover of $V$.\end{remark}

\begin{definition}
Suppose we have a classical field theory defined, as above, by an elliptic $L_\infty$ algebra $\L$ with an invariant pairing. Define the prefactorization $\Obs^{cl}_{\L}$ of classical observables of the theory to be the prefactorization algebra which assigns to $U$ the Chevalley-Eilenberg cochain complex complex
$$
\Obs^{cl}_{\L}(U) = C^\ast(\L(U)).
$$
\end{definition}
\begin{remark}
As explained in detail in \cite{CosGwi11}, we need to use a completed tensor product and continuous linear dual in the definition of Chevalley-Eilenberg cochain complex.
\end{remark}
We can view $C^\ast(\L(U))$ as the dg algebra of functions on the formal moduli problem $B \L(U)$. Thus, $C^\ast(\L(U))$ should be thought of as the algebra of functions on the derived moduli space of solutions to the equations of motion of our field theory.

We have seen above that $B \L(U)$ has a Poisson structure with a Poisson tensor of degree $1$. We would therefore expect that the commutative algebra $\Obs^{cl}_{\L}(U)$ has a Poisson bracket.  This is true, but in a homotopical sense:
\begin{lemma}
There is a sub-factorization algebra $\til{\Obs}^{cl}_{\L}$ of $\Obs^{cl}_{\L}$ which is equipped with a commutative product and a Poisson bracket of cohomological degree $1$, and is such that the inclusion
$$
\til{\Obs}^{cl}_{\L}(U) \to \Obs^{cl}_{\L}(U)
$$
is a quasi-isomorphism for every open subset $U$ of $M$. The factorization structure maps are obtained by applying the Chevalley-Eilenberg cochain functor to the restriction maps of $L_\infty$ algebras
$$
\L(V) \to \L(U_1) \oplus \dots \oplus \L(U_n)
$$
defined for any inclusion $U_1,\dots,U_n \into V$. 
\end{lemma}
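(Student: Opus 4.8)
The plan is to take $\til{\Obs}^{cl}_{\L}(U)$ to be the subcomplex of $\Obs^{cl}_{\L}(U) = C^\ast(\L(U))$ spanned by functionals whose Taylor coefficients are \emph{smooth}, and then to verify three things in turn: that it carries the $P_0$-structure, that the inclusion into $\Obs^{cl}_{\L}$ is an objectwise quasi-isomorphism, and that it is a sub-prefactorization algebra. Concretely, let $\E$ be the graded bundle underlying $\L$ and set $\E^! = \E^\vee \otimes \op{Dens}(M)$. The invariant pairing $\ip{-,-}_\E \colon \E\otimes\E\to\op{Dens}(M)[-3]$, being non-degenerate, is a bundle isomorphism $\E \iso \E^![-3]$; hence the compactly supported sections $\L^!_c(U) := \Gamma_c(U,\E^!)$ satisfy $\L^!_c(U) \iso \L_c(U)[3]$, while the continuous dual $\L(U)^\vee$ is the larger space $\br{\L^!_c}(U)$ of compactly supported \emph{distributional} sections of $\E^!$. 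I would set
$$
\til{\Obs}^{cl}_{\L}(U) := \what{\Sym}\bigl(\L^!_c(U)[-1]\bigr)\ \subset\ \what{\Sym}\bigl(\L(U)^\vee[-1]\bigr) = C^\ast(\L(U)),
$$
the inclusion induced by smooth $\into$ distributional sections. This is a genuine subcomplex, since the operators $\d$ and $l_n$ of an elliptic $L_\infty$ algebra are polydifferential operators, so their transposes preserve smoothness and compact support, and hence so does the Chevalley--Eilenberg differential.

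The $P_0$-structure is then essentially forced: transporting $\int_M \ip{-,-}_\E$ through $\E\iso\E^![-3]$ gives a graded-symmetric pairing on $\L^!_c(U)$ which, after the shift $[-1]$, induces on the completed symmetric algebra a biderivation $\{-,-\}$ of cohomological degree $1$. The relevant integrals converge precisely because the arguments have compact support, which is exactly why this bracket does \emph{not} extend to all of $\Obs^{cl}_{\L}(U)$. The graded Jacobi identity for $\{-,-\}$ amounts to the symmetry of the pairing, and the fact that the Chevalley--Eilenberg differential is a derivation of $\{-,-\}$ is, once definitions are unwound, precisely the invariance condition (2) of the definition of an invariant pairing on $\L$.

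The real content, and the step I expect to be the main obstacle, is the quasi-isomorphism. Since $\E$ is elliptic so is $\E^!$, and elliptic regularity (via a parametrix) shows that on compact supports the inclusion of smooth into distributional sections, $\L^!_c(U)[-1] \into \L(U)^\vee[-1]$, is a quasi-isomorphism of complexes. I would then promote this to the symmetric algebras: completed symmetric powers of a quasi-isomorphism of the nuclear complexes in play are again quasi-isomorphisms, by a K\"unneth argument for the relevant completed projective tensor products; filtering $C^\ast(\L(U))$ by symmetric degree, the inclusion $\til{\Obs}^{cl}_{\L}(U)\into\Obs^{cl}_{\L}(U)$ is a map of filtered complexes that is a quasi-isomorphism on the associated graded, and since this filtration is complete and exhaustive a comparison of spectral sequences shows it is a quasi-isomorphism on the total complexes. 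The functional analysis making these two reductions rigorous is exactly what is carried out in \cite{CosGwi11}, which I would cite rather than reproduce.

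For the factorization structure, for disjoint opens $U_1,\dots,U_n\subset V$ the restriction of sections is an $L_\infty$-map $\L(V)\to\L(U_1)\oplus\dots\oplus\L(U_n)$; applying $C^\ast$ and using $C^\ast(\bigoplus_i\L(U_i))\iso\bigotimes_i C^\ast(\L(U_i))$ produces the product map on $\Obs^{cl}_{\L}$. On the subcomplexes $\til{\Obs}^{cl}_{\L}$ this map is dual to restriction, hence on generators is extension-by-zero $\L^!_c(U_i)\into\L^!_c(V)$, which lands in the smooth compactly supported sections; so it restricts to $\bigotimes_i\til{\Obs}^{cl}_{\L}(U_i)\to\til{\Obs}^{cl}_{\L}(V)$. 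Associativity is inherited from functoriality of $C^\ast$ and of restriction, and these maps are morphisms of $P_0$-algebras because the pairing of two sections extended by zero from disjoint opens vanishes identically. Finally, $\til{\Obs}^{cl}_{\L}$ is a factorization algebra, not merely a prefactorization algebra, since it is objectwise quasi-isomorphic to the factorization algebra $\Obs^{cl}_{\L}$ and the local-to-global axiom is invariant under such equivalences. The only genuinely nonformal ingredient is thus the passage, in the quasi-isomorphism step, from the generator-level statement to the completed symmetric algebras; everything else is formal bookkeeping with the $P_0$-structure and the factorization maps.
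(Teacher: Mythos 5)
The paper states this lemma without proof, deferring the details to the cited Costello--Gwilliam reference \cite{CosGwi11}. Your construction --- taking $\til{\Obs}^{cl}_{\L}(U)$ to be the completed symmetric algebra on the smooth compactly supported sections $\L^!_c(U)[-1]$, establishing the quasi-isomorphism via Atiyah--Bott elliptic regularity together with a K\"unneth argument for the completed tensor products, and obtaining the factorization maps by extension-by-zero --- is precisely the construction carried out in that reference, so your approach agrees with the intended one.
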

We call a commutative dg algebra with a Poisson bracket of cohomological degree $1$ a $P_0$ algebra.  Thus, $\til{\Obs}^{cl}_{\L}(U)$ is a $P_0$-factorization algebra.  The $P_0$ structure and factorization structure are compatible: all the factorization product maps are maps of $P_0$-algebras. 

\begin{definition}
A quantization of the $P_0$-factorization algebra $\til{\Obs}^{cl}_{\L}$ is a factorization algebra $\Obs^q$ over $\C[[\hbar]]$ which, modulo $\hbar$, is equipped with a quasi-isomorphism of factorization algebras to $\til{\Obs}^{cl}_{\L}$; and such that, to leading order in $\hbar$, the failure of $\Obs^q$ is govenerned by the Poisson bracket on $\til{\Obs}^{cl}_{\L}$.
\end{definition}
\begin{remark}
We have only sketched the definition; details are given in \cite{CosGwi11}. In fact, this is a sketch of the definition of a \emph{weak quantization} of a $P_0$ factorization algebra.  The related notion of \emph{strong quantization} has stronger compatbilities between the structure of $\Obs^q$ and the Poisson structure on $\til{\Obs}^{cl}_{\L}$.
\end{remark}
The main result of \cite{CosGwi11} is the following.
\begin{theorem*}
A quantization, in the sense of \cite{Cos11}, of a classical field theory $\L$, leads to a factorization algebra $\Obs^q$ quantizing the factorization algebra $\Obs^{cl}_{\L}$ of classical observables of the the theory.
\end{theorem*}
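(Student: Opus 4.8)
The plan is to follow the strategy of \cite{CosGwi11} and build the factorization algebra $\Obs^q$ directly from the data of a quantization, which in the sense of \cite{Cos11} is a family $\{S[L]\}_{L>0}$ of effective interactions related by the renormalization group (RG) flow and satisfying the scale-$L$ quantum master equation, together with the usual locality axioms. First I would define, for each open $U\subseteq M$, a cochain complex $\Obs^q(U)$ over $\C[[\hbar]]$: an element is a family $\{O[L]\}$ of functionals on the space of fields, compatible with the RG flow from scale to scale, and subject to a support condition forcing $O[L]$ to be concentrated arbitrarily close to $U$ as $L\to 0$. The differential is the scale-$L$ BV operator $\d+\hbar\Delta_L+\{S[L],-\}_L$; that it squares to zero is precisely the scale-$L$ quantum master equation for $S[L]$, and RG compatibility makes it independent of the scale up to the RG isomorphism. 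The restriction maps $\Obs^q(V)\to\Obs^q(U)$ for $U\subseteq V$ are then tautological from the support condition.

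Next I would construct the factorization product maps. Given disjoint opens $U_1,\dots,U_n\subseteq V$ and observables $O_i\in\Obs^q(U_i)$, choose a scale $L$ small compared with the distances between the $U_i$ and set $(O_1\cdots O_n)[L]=O_1[L]\cdots O_n[L]$, defining the product at all other scales by RG flow. The key point is that at sufficiently small scale the supports of the $O_i$ do not interact under the propagator, so this naive product is a cochain map and lies in $\Obs^q(V)$; independence of the auxiliary scale is exactly RG invariance, and associativity together with the required compatibility diagram then follow formally. Passing to $\hbar=0$, the differential becomes the Chevalley--Eilenberg differential of $\L(U)$ and the factorization product becomes the commutative product, so $\Obs^q$ reduces modulo $\hbar$ to $\Obs^{cl}_{\L}$ (equivalently, after the quasi-isomorphic passage of the preceding Lemma, to $\til{\Obs}^{cl}_{\L}$). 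Expanding the scale-$L$ BV bracket to first order in $\hbar$ then identifies the leading failure of commutativity of $\Obs^q$ with the degree $1$ Poisson bracket on $\til{\Obs}^{cl}_{\L}$, so that $\Obs^q$ is a quantization in the sense defined above. It remains to verify the local-to-global (descent) axiom for $\Obs^q$: filtering by powers of $\hbar$, the associated graded is the classical factorization algebra $\til{\Obs}^{cl}_{\L}[[\hbar]]$, which already satisfies descent by the earlier Lemma, and a standard comparison of the resulting filtered complexes (checking completeness of the $\hbar$-adic filtration) lifts descent to $\Obs^q$.

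The hard part will be genuinely analytic rather than formal, and occurs at two places. The first is making the support condition in the definition of $\Obs^q(U)$ precise enough that the factorization product is well-defined: one must control, uniformly as $L\to 0$, how far an observable spreads out under RG flow, which is exactly where the estimates on the heat-kernel propagator and on the weights of Feynman graphs are needed. The second is the descent argument: although the $\hbar$-filtration reduces it to the classical statement, one must check that the comparison map is a filtered quasi-isomorphism and that all the complexes in sight are complete, so that no quantum correction obstructs gluing. These are precisely the points that occupy the bulk of \cite{CosGwi11}.
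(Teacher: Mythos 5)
The paper itself offers no proof of this statement: it is quoted verbatim as ``the main result of \cite{CosGwi11}'' and used as a black box, so there is nothing internal to compare against. That said, your sketch accurately tracks the strategy of the cited reference: the definition of $\Obs^q(U)$ as RG-compatible families $\{O[L]\}$ with a support condition shrinking to $U$ as $L\to 0$, the scale-$L$ BV differential whose square-zero property is the quantum master equation, the definition of the factorization product by multiplying at small scale and flowing up, the reduction mod $\hbar$ to the Chevalley--Eilenberg complex of $\L(U)$, the identification of the leading $\hbar$-correction with the degree $1$ Poisson bracket, and the descent argument via the $\hbar$-adic filtration and completeness are all the genuine steps of that construction. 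You have also correctly located the two places where the real analytic work lives (uniform control of the spread of observables under RG flow, and the filtered comparison used to lift descent past quantum corrections).

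One point needs fixing. You write that ``the restriction maps $\Obs^q(V)\to\Obs^q(U)$ for $U\subseteq V$ are then tautological from the support condition,'' but the arrow is backwards. Observables form a precosheaf, not a presheaf: an observable supported near $U$ is \emph{a fortiori} supported near $V$, so the tautological map is the extension map $\Obs^q(U)\to\Obs^q(V)$, which is exactly the unary case of the factorization structure maps defined earlier in the paper. There is no natural way to restrict an observable supported near $V$ to one supported near $U$. With the arrow corrected, your account of the structure maps is consistent with the rest of your construction and with the cited reference.
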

Since the results of \cite{Cos11} allow one to construct quantum field theories from classical ones using obstruction theory, this result provides a rich source of quantum factorization algebras.  In particular, in this paper, we show that minimal twists of supersymmetric gauge theories admit unique quantizations on $\C^2$.  This yields a factorization algebra quantizing the classical observables of these theories. 

\section{Cotangent field theories}
The basic way symplectic manifolds arise in geometry is, of course, as cotangent bundles.  Thus, given any elliptic moduli problem, we can construct a classical field theory as a shifted cotangent bundle.  Let us explain this construction in detail.

Let $L$ be an elliptic $L_\infty$ algebra on a manifold $X$; and let $\mc{M}_{L}$ be the associated elliptic moduli problem. 

Let $L^!$ be the bundle $L^\vee \otimes \op{Dens}(X)$.  Note that there is a natural pairing between compactly supported sections of $L$ and sections of $L^!$. 

Recall that we use the notation $\L$ to denote the space of sections of $L$; we will let $\L^!$ denote the space of sections of $L^!$.

\begin{definition}
Let us define $T^\ast [k] \mc{M}_L$ to be the elliptic moduli problem associated to the elliptic $L_\infty$ algebra $L \oplus L^! [k-2]$. 

This elliptic $L_\infty$ algebra has a pairing of cohomological degree $k-2$.
\end{definition}
The $L_\infty$ structure on the space $\L \oplus \L^! [k-2]$ of sections of the direct sum bundle $L \oplus L^![k-2]$ arises from the natural $\L$-module structure on $\L^!$.
\begin{definition}
Let $\mc M$ be an elliptic moduli problem.  Then, the \emph{cotangent field theory} associated to $\mc M$ is the $0$-symplectic elliptic moduli problem $T^\ast[-1] \mc M$. Explicitly, if $\mc M$ is described by the elliptic $L_\infty$ algebra $\L$, then $T^\ast[-1] \mc M$ is described by $\L \oplus \L^![-3]$. 
\end{definition}
As we will see, many important theories in physics and mathematics are cotangent theories.  However, field theories are not normally presented in the language we use: instead, one is normally given an action functional and a gauge group.  In this language, one can recognize a cotangent field theory as follows.  

Suppose that we have a field theory on a manifold which has a space of fields which is the sections of the direct sum of two vector bundles $E$ and $F$.  Sections of $E$ will be denoted by $A$ and of $F$ by $B$.  Let us suppose that we have an action functional of the form
$$
S = \int_{M} B \Phi(A)
$$
where $\Phi : E \to F^! = F^\vee \otimes \op{Dens}_M$ is some local polynomial map (local meaning that the Taylor components of $F$ are polydifferential operators).  Let us also suppose that we have a gauge Lie algebra which is the space of sections of some bundle $C$, which acts on the space of fields preserving the decomposition into $E$ and $F$.  Let us use the notation $\mc{C}, \mc{E}, \mc{F}$ for sections of $C,E,F$. Let us further assume that the equation $\Phi(A) = 0$ is elliptic modulo gauge.  Then, we have a cotangent field theory.

Indeed, the elliptic $L_\infty$ algebra constructed by taking the derived critical locus of the action functional on the derived quotient of the space of fields by the gauge group is of the form $\mc{L} \oplus \mc{L}^![-3]$, where
$$
\mc{L} = \mc{C} \oplus \mc{E}[-1] \oplus \mc{F}^![-2].
$$
Theories of this nature are often called in the physics literature $BF$ theories, or $\beta-\gamma$ systems, or $b-c$ systems. 

The phase space of a cotangent field theory is particularly simple. Recall the \emph{phase space} of a classical field theory is the space of jets of solutions to the equations of motion along a compact codimension $1$ manifold. The phase space is always symplectic.   The phase space of a cotangent field theory is always a cotangent bundle.  
\begin{comment}
Indeed, if $M$ is a manifold equipped with an elliptic $L_\infty$ algebra $\L$, and if $N$ is a codimension $1$ submanifold of $M$, let $\L_{N}$ denote the sheaf on $N$ of jets of sections of $\L$.  The phase space of the cotangent field theory associated to $\L$ is, by definition, described by the sheaf of $L_\infty$ algebras $(\L \oplus \L^{!}[-3])_{N}$ on $N$.  The statement is that this phase space is equivalent (up to issues of completion) to an ordinary, unshifted cotangent bundle to the formal moduli space associated to sections of $\L_{N}$.  More precisely, there is a map of sheaves of $\L_{N}$-modules 
$$
(\L^!)_{N} [-3] \to \Gamma_c( \L_{N} ) ^\vee [-2]
$$
where $\Gamma_c (\L_{N})$ is the compactly supported sections of $\L_{N}$.  This map is injective with dense image.  On the right hand side the shift  by $[-2]$ indicates an ordinary cotangent bundle. 
\end{comment}

\subsection{}
Next I will give the definition of an $L_\infty$ action on a classical field theory.   Recall that, if $\g$ is an $L_\infty$ algebra and $\mscr{L}$ is an elliptic $L_\infty$ algebra on a space $X$, then a $\g$-action on $\mscr{L}$ is a family of elliptic $L_\infty$ algebras over $C^\ast(\g)$, which specializes to $\mscr{L}$ modulo the maximal ideal $C^{>0}(\g)$.  The same definition applies to classical field theories.

Let $R$ be a differential graded algebra, and let $\mscr{L}$ be an $R$-family of elliptic $L_\infty$ algebras.   Recall that this means that we have a graded bundle $L$ of $R^\sharp$-modules on $X$, whose sheaf $\mscr{L}$ of sections is equipped with a differential making it into a sheaf of dg $R$-modules, and with an $R$-linear $L_\infty$ structure.  We will let
$$
L^! = L^\vee \otimes \op{Dens}_X
$$
where $L^\vee$ is the $R^\sharp$-linear dual of $L$.  We will let $\mscr{L}^!$ denote the sheaf of sections of $L^!$.  This has a natural structure of sheaf of dg modules over $R$, with an $L_\infty$ action of $\mscr{L}$. 
\begin{definition}
An invariant pairing of degree $k$ on an $R$-family of elliptic $L_\infty$ algebras $\mscr{L}$ is an $R$-linear isomorphism 
$$
\mscr{L} \iso \mscr{L}^! [k]
$$
of sheaves of $\mscr{L}$-modules, which is symmetric as before.
\end{definition}
\begin{definition}
Let $\g$ is an $L_\infty$ algebra, and let $\mscr{L}$ be a classical field theory on a space $X$. Thus $\mscr{L}$ is an elliptic $L_\infty$ algebra on $X$ with an invariant pairing $\mscr{L} \iso \mscr{L}^![-3]$ of degree $-3$.   Then a $\g$-action on $\mscr{L}$ is a family of elliptic moduli problems $\mscr{L}^{\g}$ on $X$, flat over the base ring $C^\ast(\g)$, equipped with an invariant pairing of degree $-3$, which specializes to $\mscr{L}$ modulo the maximal ideal $C^{>0}(\g)$ of $C^\ast(\g)$. 
\end{definition}
If $\mscr{L}$ is an elliptic $L_\infty$ algebra on $X$ with an action of $\g$, then the cotangent field theory $T^\ast[-1] \mscr{L}$ also has a natural action of $\g$, compatible with the invariant pairing.  

\section{Examples of cotangent field theories}
Many classical field theories of interest in mathematics and physics arise as cotangent theories.  In this section we will list some examples.

In order to make the discussion more transparent, I will normally not explicitly describe the elliptic $L_\infty$ algebra related to an elliptic moduli problem; instead, I will simply define the elliptic moduli problem in terms of the geometric objects it classifies.  In all examples, it is straightforward using the techniques we have discussed so far to write down the elliptic $L_\infty$ algebra describing the formal neighbourhood of a point in any of the elliptic moduli problems we will consider.

\subsection{Self-dual Yang-Mills theory}
Let $X$ be an oriented $4$-manifold equipped with a conformal class of a metric.  Let $G$ be a compact Lie group.   Let $\mc M(X, G)$ denote the elliptic moduli problem parametrizing principal $G$-bundles on $X$ with a connection whose curvature is self-dual.

Then, we can consider the cotangent theory $T^\ast[-1] \mc M(X,G)$.   This theory is known in the physics literature as \emph{self-dual Yang-Mills theory}.  

Let us describe the $L_\infty$ algebra of this theory explicitly. Observe that the elliptic $L_\infty$ algebra describing the completion of $\mc M(X,G)$ near a point $(P,\nabla)$ is 
$$
\Omega^0(X, \g_P ) \xto{\d} \Omega^1( X, \g_P ) \xto{\d_-} \Omega^2_-(X, \g_P )
$$ 
where $\g_P$ is the adjoint bundle of Lie algebras associated to the principal $G$-bundle $P$. 

Thus, the elliptic $L_\infty$ algebra describing $T^\ast[-1] \mc M$ is given by the diagram
$$
\xymatrix{
\Omega^0(X, \g_P ) \ar[r]^{\d}  & \Omega^1( X, \g_P ) \ar[r]^{\d_-} \ar@{}[d]^{\bigoplus} & \Omega^2_-(X, \g_P ) \ar@{}[d]^{\bigoplus} & \\
& \Omega^2_-(X, \g_P ) \ar[r]^{\d} & \Omega^3( X, \g_P ) \ar[r] & \Omega^4(X, \g_P )
}
$$
This is a standard presentation of the fields of self-dual Yang-Mills theory in the BV formalism.  Indeed, it is obtained by applying the BV construction to the space of fields $\Omega^1(X, \g_P) \oplus \Omega^2_-(X,\g_P)$, with action functional
$$
S(A,B) = \int F(A)_- \wedge B
$$
(where $A$ is the connection and $B$ is the anti-self dual $2$-form), and gauge Lie algebra $\Omega^0(X, \g_P)$. 

Ordinary Yang-Mills theory arises as a deformation of the self-dual theory.   The deformation is given by simply deforming the differential in the dg Lie algebra presented in the diagram above by including a term which is multiplication by a constant $c$ from the copy of $\Omega^2_-$ situated in degree $1$ to the copy in degree $2$. The constant $c$ is the coupling constant of the theory.  

\subsection{Curved $\beta-\gamma$ systems}
Let $E$ be an elliptic curve and let $X$ be a complex manifold.   Let $\mc M(E, X)$ denote the elliptic moduli problem parametrizing holomorphic maps from $E \to X$.  As before, there is an associated cotangent field theory $T^\ast [-1] \mc M(E,X)$.

This field theory is called by physicists the curved $\beta-\gamma$ system. It arrises as a twist of the $\sigma$-model with $(0,2)$-supersymmetry \cite{Wit05, Kap05}.  (In \cite{Cos10a}, I called this theory a ``a holomorphic Chern-Simons theory''. Although from the point of view of derived geometry this terminology is reasonable, people have found it confusing so I would prefer not to use it).

The case we are considering here is that of maps from an elliptic curve $E$ to the cotangent bundle of a smooth variety $X$. There is an isomorphism
$$
T^\ast[-1] \mc{M}(E,X) = \mc{M}(E,T^\ast X).
$$
Indeed, the definition of $T^\ast[-1]$ implies that $T^\ast[-1]\mc{M}(E,X)$ is the elliptic moduli problem associated to holomorphic maps $f : E \to X$ together with a holomorphic section of $K_{E} \otimes f^\ast T^\ast X$. Since the canonical bundle is trivial, this is the same data as a holomorphic map to $X$.
 
This theory has an interesting role in both mathematics and physics.  For instance, the factorization algebra associated to this theory (using the techniques of \cite{CosGwi11}) is believed \cite{Wit05, Kap05} to be an incarnation of the chiral differential operators of $X$.    Also, it was shown in \cite{Cos11a} that the partition function of this theory (at least, the part which discards the contributions of non-constant maps to $X$) is the Witten genus of $X$.

\section{The $A$- and $B$-models of mirror symmetry}

In this section we will first describe the $\tfrac{1}{2}$-twisted $A$- and $B$-models, and then describe how the fully twisted $A$- and $B$-models arise as deformations of the $\tfrac{1}{2}$-twisted models.

\subsection{}
Both the $\tfrac{1}{2}$-twisted models we will be discussing are cotangent theories built from holomorphic maps between graded complex manifolds.  Recall that the space of maps between any elliptic ringed space and any $L_\infty$ space defines an elliptic moduli problem.  The source elliptic ringed space will be one of the following:
\begin{align*} 
\Sigma_{\dbar} &= (\Sigma, \Omega^{0,\ast}_{\Sigma} ) \\ 
T[1] \Sigma_{\dbar} &= (\Sigma, \Omega^{0,\ast}(\Sigma, \Oo_\Sigma \oplus K_\Sigma[-1] ) ) \\
T^\ast [1] \Sigma_{\dbar}  &= (\Sigma, \Omega^{0,\ast}(\Sigma, \Oo_\Sigma \oplus T_\Sigma[-1] ))\\
\Sigma_{dR} &= (\Sigma, \Omega^\ast_\Sigma \otimes \C ).
\end{align*}
Let $X$ be a complex manifold, and let $\g_X$ be the curved $L_\infty$ algebra over $\Omega^\ast_X$ encoding the complex structure of $X$. Recall that $\g_X$ has the property that the sheaf of $\Omega^\ast_X$-linear cochains $C^\ast(\g_X)$ are quasi-isomorphic to Dolbeaut complex $\Omega^{0,\ast}_X$.  

The target $L_\infty$ space be one of the following:
\begin{align*}
X_{\dbar} &= (X, \g_X ) \\
T[1] X_{\dbar} &= (X, \g_X \oplus \g_X[1] )\\
T^\ast[1] X_{\dbar} &= (X, \g_X \oplus \g_X^\vee ).
\end{align*}
Recall that points in the elliptic moduli problem of maps $\Sigma_{\dbar} \to X_{\dbar}$ are just holomorphic maps from $\Sigma$ to $X$.  We rewrite holomorphic maps in terms of the $L_\infty$ space $(X,\g_X)$ because this language allows us to give a concrete description of the formal neighbourhood of every holomorphic map in the derived moduli space of maps.  (Another important reason for using this language is that it is well-suited to a perturbative analysis of the quantum theory).
 
\subsection{}
Let us first discuss the $\tfrac{1}{2}$-twisted $B$-model.  Let $\Sigma$ be a Riemann surface, and let $X$ be a complex manifold. 
\begin{definition}
 The $\tfrac{1}{2}$-twisted $B$-model is the cotangent theory to the elliptic moduli problem on $\Sigma$ describing maps
$$
\phi : \Sigma_{\dbar} \to T^\ast[1] X_{\dbar}
$$
to the shifted cotangent bundle of $X$ (which we view as a graded complex manifold).
\end{definition}

Let us fix a holomorphic map $\phi : \Sigma \to X$.  We will describe the elliptic $L_\infty$ algebra on $\Sigma$ describing the classical field theory near $\phi$.     If $\phi : \Sigma \to X$ is a holomorphic map, the pull back 
$$
\phi^\ast \g_X = \phi^{-1} \g_X \otimes_{\phi^{-1} \Omega^\ast_X} \Omega^{0,\ast}_\Sigma
$$
is an ordinary (non-curved) $L_\infty$ algebra, which controls deformations of the holomorphic map $\phi$.  Also, as a sheaf of $\Omega^{0,\ast}_\Sigma$-modules,
$$
\phi^\ast \g_X = \Omega^{0,\ast}(\Sigma, \phi^\ast T X [-1] ) .
$$
This identification equips $\Omega^{0,\ast}(\Sigma, \phi^\ast T X[-1] )$ with an $\Omega^{0,\ast}_\Sigma$-linear $L_\infty$ structure.  In what follows, we will identify $\phi^\ast \g_X$ in this way. 

Deformations of $\phi$ as a map to $T^\ast [1] X$ are described by the elliptic $L_\infty$ algebra 
$$
\Omega^{0,\ast}( \Sigma, \phi^\ast T X [-1] \oplus  \phi^\ast T^\ast X ).
$$
The $L_\infty$ structure here is a semi-direct product $L_\infty$ algebra, arising from the natural $L_\infty$ action of $\Omega^{0,\ast}(\Sigma, \phi^\ast T X [-1] ) $ on $\Omega^{0,\ast} ( \Sigma, \phi^\ast T^\ast X )$. 

The cotangent theory to this elliptic $L_\infty$ algebra is then 
$$
\Omega^{0,\ast}( \Sigma, \phi^\ast T X [-1] \oplus  \phi^\ast T^\ast X \oplus K_\Sigma \otimes \phi^\ast T^\ast X [-1] \oplus K_\Sigma \otimes \phi^\ast T X [-2]  ).
$$
As before, the $L_\infty$ structure is a semi-direct product structure arising from the action of $\Omega^{0,\ast}(\Sigma, \phi^\ast T X [-1] )$ on the other summands. 

Note that we can also view this field theory as the cotangent field theory to the elliptic moduli problem described by 
$$
\Omega^{0,\ast}( \Sigma, \phi^\ast T X [-1] \oplus K_\Sigma\otimes \phi^\ast T X [-2] ).
$$
The latter elliptic moduli problem can be interpreted as the space of maps $T[1] \Sigma_{\dbar} \to X_{\dbar}$.

Following this reasoning, the full elliptic moduli problem (i.e.\ including the cotangent directions) for the $\tfrac{1}{2}$-twisted theory can also be interpreted as describing the space of holomorphic maps from $T[1] \Sigma$ to $T^\ast[1] X$.   The degree $-1$ symplectic form on this mapping spaces arises via the AKSZ formalism by transgressing the degree $1$ symplectic form on $T^\ast[1] X_{\dbar}$ using the degree $-1$ volume form on $T[1] \Sigma_{\dbar}$.  

\subsection{}
Next let us discuss the half-twisted $A$-model, with target $X$.  The factorization algebra associated to this field theory is conjectured \cite{Kap05} to be the chiral de Rham complex of $X$ \cite{GorMalSch00}.

\begin{definition}
The $\tfrac{1}{2}$-twisted $A$-model is to be the cotangent theory for the elliptic moduli problem of maps
$$
\Sigma_{\dbar} \to T[1] X_{\dbar}. 
$$
\end{definition}
If we perturb around a given holomorphic map $\phi :\Sigma \to X$, as above, then the elliptic $L_\infty$ algebra describing this mapping problem is 
$$
\Omega^{0,\ast}(\Sigma, \phi^\ast T[-1] X \oplus \phi^\ast T X )
$$
where $\phi^\ast T X $ is an $L_\infty$ module over $\phi^\ast T [-1] X$.  

The corresponding cotangent theory is described by the elliptic $L_\infty$ algebra
$$
\Omega^{0,\ast} (  \Sigma, \phi^\ast T[-1] X \oplus \phi^\ast T  X \oplus K_\Sigma \otimes \phi^\ast T^\ast X[-2] \oplus  K_\Sigma \otimes \phi^\ast T^\ast X[-3]).
$$

\subsection{}
Let us now consider the fully twisted $A$- and $B$-models.  We will start with the fully-twisted $B$-model.  

When we discuss supersymmetric gauge theories, we will see that the a twist of a supersymmetric field theory is given by a $\C^\times$ equivariant family of field theories over $\C$; which at the origin specializes to the original theory, and elsewhere to the twisted theory.  I will only discuss supersymmetry in detail in $4$ dimensions; in $2$ dimensions, I will only describe the twisted theories, without giving a derivation in terms of supersymmetry. 

\begin{definition}
The (fully twisted) $B$-model with source a Riemann surface $\Sigma$, and target a complex manifold $X$, is the cotangent theory to the elliptic moduli problem of maps
$$\Sigma_{dR} \to X_{\dbar}.$$
\end{definition}

Note that the un-derived version of the space of maps from $\Sigma_{dR}$ to $X_{\dbar}$ is the space of constant maps.  We can describe the derived moduli space of such maps as the $L_\infty$ space  $(X, \g_X \otimes \Omega^\ast(\Sigma) )$. Note that this is quasi-isomorphic to the $L_\infty$ space $(X,\g_X \otimes H^\ast(\Sigma,\C) )$, because there is a quasi-isomorphism of commutative dg algebras $\Omega^\ast(\Sigma) 
\simeq H^\ast(\Sigma)$. 

The corresponding cotangent theory is described by the $L_\infty$ space
$$
(X, (\g_X \oplus \g_X^\vee[2]) \otimes \Omega^\ast(\Sigma)).
$$
This cotangent theory (i.e. the fully-twisted $B$-model) can be interpreted as the space of maps 
$$
\Sigma_{dR} \to T^\ast[1] X_{\dbar}.
$$

In order to justify the relationship between this theory and the $\tfrac{1}{2}$-twisted $B$-model, we need to exhibit this theory as the general fibre of a $\C^\times$-equivariant family of theories over $\C$.

For $t \in \C$, let us define an elliptic ringed space $\op{Rees}_t (\Sigma_{dR})$ by
$$
\op{Rees}_t (\Sigma_{dR}) = (\Sigma, \Omega^{\ast,\ast}_\Sigma, \dbar + t \partial ).
$$
As $t$ varies, this defines the Rees family of algebras associated to the  Hodge filtration on $\Omega^\ast_\Sigma$.  At $t = 0$, $\op{Rees}_t(\Sigma_{dR})$ is $T[1] \Sigma_{\dbar}$, and for $t \neq 0$, $\op{Rees}_t(\Sigma_{dR})$ is $\Sigma_{dR}$.  

By considering the cotangent theory associated to the elliptic moduli problem of maps
$$
\op{Rees}_t(\Sigma_{dR}) \to X_{\dbar}
$$
we find a $\C^\times$-equivariant family of theories over $\C$.   At $t = 0$, this family of elliptic moduli problems specializes to the cotangent theory associated to maps $T[1] \Sigma_{\dbar} \to X$ and we have seen above that this cotangent theory is the $1/2$ twisted $B$-model.

\begin{remark}
When I claim that ``this is the $B$-model'' all I mean is that this is the classical field theory which a physicist would call the $B$-model.  Later we will see how, if one quantizes this theory, one finds a projective volume form on the derived space of maps from $\Sigma_{dR}$ to $X_{\dbar}$.   Integrals against this volume form should produce the correlators of the $B$-model topological field theory. 
\end{remark}

\subsection{}
Next, let us describe the fully-twisted $A$-model. From the point of view of perturbative quantum field theory, the fully-twisted $A$-model is not very interesting: we will see that the elliptic $L_\infty$ algebra describing this theory on a surface $\Sigma$ has vanishing cohomology. 

We have seen that the $\tfrac{1}{2}$-twisted $A$-model is the cotangent theory for holomorphic maps from $\Sigma$ to $T[1] X$.  Note that the sheaf on $X$ of holomorphic functions on $T[1] X$ is the holomorphic de Rham algebra of $X$, with zero differential. Of course, this is quasi-isomorphic to the sheaf $\Omega^{\ast}_X$ of $\cinfty$ de Rham complex, equipped with the differential $\dbar$. 

To define the fully-twisted $A$-model, we will deform this sheaf of rings into the de Rham complex of $X$.  Thus, we define
$$
\op{Rees}_t(X_{dR})
$$
to be the $L_\infty$ space
$$
(X, \g_X \xto{t \times \op{Id} } \g_X[-1] ) .
$$
Note that the $\Omega^\ast_X$-linear cochain complex of the $L_\infty$ algebra $\g_X[1] \xto{t \times \op{Id} } \g_X$ is quasi-isomorphic to $\Omega^\ast_X$ with differential $\dbar + t \partial$. 
The deformation parameter $t$ is best taken to be a formal parameter.  
\begin{definition}
The fully twisted $A$-model is the family of theories over $\C[[t]]$ obtained as  the cotangent theory to the elliptic moduli problem of maps 
$$
\Sigma_{\dbar} \to \op{Rees}_t( X_{dR} ) .
$$
\end{definition}
If we perturb near a given holomorphic map $\phi : \Sigma \to X$, then the elliptic $L_\infty$ algebra on $\Sigma$ (linear over $\C[[t]]$) describing this mapping problem is 
$$
\Omega^{0,\ast} (\Sigma, \phi^\ast T X \xto{t \times \op{Id} } \phi^\ast T X [-1] ) \otimes \C[[t]]. 
$$
That is, the sheaf $\phi^\ast T X \oplus \phi^\ast T X [-1]$ is equipped with a differential, which is $t$ times the identity map from $\phi^\ast T X$ in degree $0$ to $\phi^\ast T X $ in degree $1$.  Note that if we invert $t$, this sheaf of elliptic $L_\infty$ algebras has no cohomology.  Thus, perturbative quantum field theory has nothing interesting to say about the fully-twisted $A$-model.

When we discuss quantization later, we will see that the quantization of a cotangent theory to an elliptic moduli problem leads to a volume form on that elliptic moduli problem.  The $A$-model is essentially the cotangent theory to the de Rham stack of the moduli space of holomorphic maps from $\Sigma$ to $X$.  A volume form on any space $Z$ is a section of the Grothendieck-Serre dualizing sheaf. The dualizing sheaf of $Z_{dR}$ should be the topological Verdier dualizing sheaf of the topological space $Z$, and a section of this is a homology class on $X$.  

Thus, we would hope that the volume form arising from quantizing the $A$-model should be the virtual fundamental class of the moduli space of holomorphic maps from $\Sigma$ to $X$.  

More generally, we can see from this picture that local observables are what one would expect. In general, classical observables of a field theory on $\Sigma$ are functions on the solutions to the equations of motion on some open subset $U$ of $\Sigma$. In this case, if $U = D$ is a disc, then the equations of motion are the de Rham stack of the space of holomorphic stack from $D$ to $X$. Therefore, observables on $D$ should be the de Rham cohomology of this mapping space.  Local observables -- supported on a point in $\Sigma$ -- are obtained by using a formal disc in place of $D$.  Since the space of maps from a formal disc to $X$ is homotopy equivalent to $X$, we find that local observables are $H^\ast(X)$, as expected.    One expects that the operator product of local observables gives the quantum cup product on $H^\ast(X)$. 

One might hope that a complete construction of the $\tfrac{1}{2}$-twisted $A$-model will give us an interesting refinement of the theory of Gromov-Witten invariants.  Indeed, local observables of the $\tfrac{1}{2}$-twisted $A$-model are the chiral de Rham complex of $X$, which is infinite dimensional. One might hope that correlation functions of such observables (if they could be defined non-perturbatively, which is not at all obvious) should generalize Gromov-Witten invariants.

\begin{comment} Indeed, following the above logic, we would expect that the volume form associated to the $\tfrac{1}{2}$-twisted model should be a volume form on $T[1] \op{Maps}(\Sigma, X)$ where the mapping space should be treated in a derived way.   One could hope that this volume form should extend to the Kontsevich stable-map compactification of the space of maps from $\Sigma$ to $X$, so that one can integrate.   Local observables of the $\tfrac{1}{2}$-twisted $A$-model are described by the chiral de Rham complex, which is infinite-dimensional.  Correlation functions of local observables should be given by integrals of holomorphic functions on $T[1] \op{Maps}(\Sigma,X)$ against the volume form on this space.

The chiral de Rham complex admits a differential -- the chiral de Rham differential -- whose cohomology is the ordinary de Rham cohomology of $X$.  The twisting philosophy tells us that correlation functions of local observables of the $\tfrac{1}{2}$-twisted $A$-model which are closed under the chiral de Rham differential should coincide with Gromov-Witten invariants. Correlation functions of observables which are not closed should give the desired refinement of Gromov-Witten theory. 

The main difficulty with making this story rigorous seems to me to be extending the $\tfrac{1}{2}$-twisted $A$-model to nodal curves instead of just smooth curves, which would allow one to integrate the measure well as smooth curves, which would ensure that 
\end{comment}

\subsection{}
Of course, there are many variants of the $\tfrac{1}{2}$-twisted $A$- and $B$-models.   Given any surface $\Sigma$ with a line bundle $L$, one can consider the cotangent theory of the elliptic moduli problem of holomorphic maps from the graded complex manifold $L[1]$ to $X$.  When $L = K_\Sigma$, this is the $\tfrac{1}{2}$-twisted $B$-model, and when $L$ is trivial, this is the $\tfrac{1}{2}$-twisted $A$-model.

\section{Quantization of cotangent field theories}
In this section I'll say a little bit about the geometric meaning of a quantization of a cotangent field theory. Quantization is interpreted in the sense of \cite{Cos11} and \cite{CosGwi11}; I briefly sketched the approach developed in \cite{CosGwi11} in section \ref{section_factorization}.    The main result sketched in this section is the following.
\begin{proposition}
Let $\mc M$ denote a formal elliptic moduli problem on a manifold $X$, and let $T^\ast[-1] \mc M$ be the associated cotangent field theory.   Then, a quantization of the cotangent field theory yields a volume form (defined up to multiplication by a scalar) on the formal derived space $\mc M(X)$. 
\end{proposition}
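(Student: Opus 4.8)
The plan is to reduce the assertion to a finite-dimensional statement about Batalin--Vilkovisky operators, and then to invoke the standard dictionary identifying BV operators on a $(-1)$-shifted cotangent space with volume forms on its base.

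First I would pass to a finite-dimensional model using ellipticity and compactness. Since $\L$ is an elliptic $L_\infty$ algebra on the compact manifold $X$, the complex $\L(X)$ has finite-dimensional cohomology, and homotopy transfer to a minimal model presents the formal derived space $\mc M(X)$ as a finite-dimensional formal derived space $N$, with $\Oo(N) = C^\ast(\g)$ for a finite-dimensional $L_\infty$ algebra $\g \simeq \L(X)$. The invariant pairing $\int_X \ip{-,-}$ together with elliptic (Serre-type) duality identifies $\L^!(X)$ with the continuous linear dual of $\L(X)$, so that the global sections of the cotangent theory, $\L(X) \oplus \L^!(X)[-3]$, present (up to quasi-isomorphism) the $L_\infty$ algebra of the $(-1)$-shifted cotangent space $T^\ast[-1] N$. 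Consequently the classical global observables $\Obs^{cl}_{T^\ast[-1]\mc M}(X) = C^\ast(\L(X) \oplus \L^!(X)[-3])$ are quasi-isomorphic, as a $P_0$ algebra, to the algebra of functions on $T^\ast[-1] N$, i.e.\ the shifted polyvector fields on $N$ with the Schouten bracket.

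Next I would extract a BV operator from the quantization. By the theorem of \cite{CosGwi11} recalled above, a quantization yields a factorization algebra $\Obs^q$ over $\C[[\hbar]]$ deforming $\Obs^{cl}$; evaluating on the compact manifold $X$ and transferring to the finite-dimensional model $N$, the global quantum observables form a $\C[[\hbar]]$-linear deformation of the $P_0$ algebra $\Oo(T^\ast[-1]N)$ in which the differential generates the bracket. Its $\hbar$-linear part $\Delta$ is therefore a second-order operator of cohomological degree $+1$ which annihilates constants, squares to zero, commutes with the Chevalley--Eilenberg differential, and generates the degree $+1$ Poisson bracket; that is, $\Delta$ is a BV (odd) Laplacian for the degree $-1$ symplectic structure on $T^\ast[-1]N$.

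Finally I would apply the finite-dimensional fact that such BV Laplacians are precisely the divergence operators attached to volume forms. A volume form $\omega$ on $N$ -- a trivialization of the Berezinian line, equivalently of the Grothendieck--Serre dualizing sheaf -- induces the contraction isomorphism between the polyvector fields of $N$ and the differential forms of $N$; under it the de Rham differential corresponds to an operator $\Delta_\omega$ on $\Oo(T^\ast[-1]N)$ of exactly the type above, with $\Delta_\omega^2 = 0$ inherited from $\d_{dR}^2 = 0$. Conversely every BV Laplacian $\Delta$ on $\Oo(T^\ast[-1]N)$ compatible with the Chevalley--Eilenberg differential is $\Delta_\omega$ for an $\omega$ uniquely determined up to an overall scalar, the compatibility with the differential being precisely the closedness condition that makes $\omega$ a volume form on the derived space $N = \mc M(X)$. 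Applying this to the $\Delta$ produced by the quantization gives the desired volume form on $\mc M(X)$, canonically up to scaling.

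The hard part will be the passage from the homotopy-theoretic notion of quantization of \cite{Cos11, CosGwi11} to an honest, strict BV Laplacian on a finite-dimensional model of the global observables, and the verification that the residual homotopical ambiguities in that passage correspond exactly to the scalar ambiguity asserted in the statement rather than to some coarser equivalence. One must also check that the abstract deformation is genuinely realized by divergence against a volume form and not merely by an operator cohomologous to one; it is here that the compactness of $X$ and the cotangent (rather than general $0$-symplectic) nature of the theory do the essential work.
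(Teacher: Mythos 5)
Your overall strategy — pass to a finite-dimensional $L_\infty$ model by homotopy transfer, identify global observables of the cotangent theory with $\Oo(T^\ast[-1]B\g)$, extract the $\hbar$-linear part of the quantized differential as a BV operator, and match it to a divergence operator — is the same route the paper takes. But there is a genuine gap at exactly the spot you flag as "the hard part," and it is worth naming the missing ingredient.

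You claim that the $\hbar$-linear part $\Delta$ of the quantized differential "squares to zero." This does not follow from the definition of a $BD_0$ quantization: for a general $BD_0$ structure the differential may be $D = \d_0 + \hbar\Delta_1 + \hbar^2\Delta_2 + \cdots$, and $D^2 = 0$ only forces $\Delta_1^2 = -[\d_0,\Delta_2]$, which need not vanish. The paper closes this gap by restricting to \emph{$\C^\times$-equivariant} quantizations, using the grading on $\Oo(T^\ast[-1]B\g) = \bigoplus_k \Gamma(B\g,\wedge^k TB\g)$ by polyvector degree (with $\hbar$ assigned weight $1$). Equivariance forces the differential to preserve $\Oo(B\g)$ and to send $\op{Vect}(B\g)$ into $\op{Vect}(B\g)\oplus\hbar\Oo(B\g)$; no higher powers of $\hbar$ can appear, so the differential is exactly $\d_0 + \hbar\tr$, and $\tr$ is the sought-after square-zero BV operator. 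Your sketch is silent on where this equivariance comes from and why it is legitimate to impose it; the point is that the cotangent construction $T^\ast[-1]\mc M$ carries a canonical $\C^\times$ action scaling the fibres, and the result one actually proves is about $\C^\times$-equivariant quantizations.

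The second thing the paper handles that you gesture at but do not resolve is the meaning of "volume form" on a formal derived space. The paper explicitly declines to define the canonical sheaf of a formal moduli problem and instead \emph{defines} a "projective volume form" on $B\g$ to be a right $D(B\g)$-module structure on $\Oo(B\g)$; it then proves the precise statement that the simplicial set of such structures is homotopy equivalent to the simplicial set of $\C^\times$-equivariant quantizations, by reducing the $\tr$ operator to a map $\Phi:\op{Vect}(B\g)\to\Oo(B\g)$ satisfying the derivation-type identity $(\dagger)$ that encodes the right module axioms. Your appeal to the Berezinian line or the Grothendieck--Serre dualizing sheaf is morally the same object but is not something the paper constructs; replacing it by the $D$-module formulation is exactly how the paper makes the statement and its proof precise.
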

This result is a version of a theorem proved by Koszul \cite{Kos85}.  A closely related result is proved in \cite{BeiDri04}.

In order to separate the analytic technicalities from more conceptual issues, I will start by proving this result in a finite-dimensional context, that is, when the space-time manifold $X$ is a point.  Then I will indicate how the statements need to be modified for the general case.  This modification is not difficult, and uses techniques developed in great detail in \cite{Cos11} and \cite{CosGwi11}.

Thus, let $\g$ be a finite dimensional differential graded $L_\infty$ algebra, equipped with an invariant pairing of degree $-3$.  Such a $\g$ describes a formal pointed derived space with a symplectic form of degree $-1$.   Let $C^\ast (\g)$ denote the pro-nilpotent differential graded algebra of cochains on $\g$.  The invariant pairing on $\g$ endows $C^\ast(\g)$ with a Poisson bracket of cohomological degree $1$.  

\begin{definition}
A $P_0$ algebra is a commutative differential graded algebra equipped with a Poisson bracket of cohomological degree $1$.

The $P_0$ operad is the operad in the category of cochain complexes whose algebras are $P_0$ algebras.
\end{definition}

\subsection{}
Our approach to quantization parallels the deformation-quantization interpretation of quantum mechanics.  In ordinary deformation quantization, one starts with commutative algebra $A$ with a Poisson bracket of degree $0$.  This encodes the classical mechanical system we start with.  The problem of quantization is then interpreted as the problem of deforming $A$ into a flat family of associative algebras $\til{A}$ over $\C[[\hbar]]$, which reduces to $A$ modulo $\hbar$, and has the property that, to first order in $\hbar$, the failure of $\til{A}$ to be commutative is measured by the Poisson bracket on $A$. 

\subsection{}
Before introducing the notion of quantization of a dga with a Poisson bracket of degree $1$, I will show how the usual deformation-quantization story can be expressed using the language of operads.  

\begin{definition}
Let $P_k$ be the operad whose algebras are commutative dg algebras with a Poisson bracket of cohomological degree $1-k$.  

Thus, a $P_1$-algebra is a Poisson algebra in the usual sense.
\end{definition}
\begin{remark}
This terminology was suggested by Jacob Lurie: the point is that the $P_k$ operad is closely related to the operad $E_k$ of little $k$-discs. Note that $E_1$ is equivalent to the associative operad.  
\end{remark}
Next, we will construct an operad $BD_1$ over $\C[[\hbar]]$ with the property that, modulo $\hbar$, $BD_1$ is isomorphic to the Poisson operad; and that when we invert $\hbar$, $BD_1$ is isomorphic to the associative operad tensored with $\C((\hbar))$.  This operad was constructed by Ed Segal \cite{Seg10}.

\begin{definition}
Let $V$ be a cochain complex flat over $\C[[\hbar]]$.  A $BD_1$ structure on $V$ consists of an associative product $\star$, on $V$, and a Lie bracket $[-,-]$ on $V$, both $\C[[\hbar]]$ linear and of degree $0$; such that the following additional relations hold.
\begin{align*}
a \star b - (-1)^{\abs{a} \abs{b}} b \star a &= \hbar [a,b]\\ 
[a \star b, c] = a \star [b,c ] + (-1)^{\abs{b}\abs{c}} [a,c] \star b.
\end{align*}

Let $BD_1$ be the operad over $\C[[\hbar]]$ whose algebras are $BD_1$ algebras.
\end{definition}
Note that, modulo $\hbar$, $BD_1$ is the ordinary Poisson operad $P_1$. Indeed, the relations in $BD_1$ are precisely those ensuring that the product $\star$ is commutative, and that the bracket $[-,-]$ is a Poisson bracket. When $\hbar$ is inverted, $BD_1$ is the associative operad. One can check that $BD_1$ is flat over $\C[[\hbar]]$. 

\begin{lemma}
Suppose that $A$ is a differential graded $P_1$ algebra. Then, deformation quantizations of $A$ are the same as lifts of $A$ to a $BD_1$ algebra $\til{A}$.
\end{lemma}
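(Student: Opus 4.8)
The plan is to set up mutually inverse constructions between $BD_1$ lifts of $A$ and deformation quantizations of $A$; the key observation is that in a $BD_1$ algebra the Lie bracket is not independent data but is forced by the associative product.

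First, suppose $\til A$ is a $BD_1$ algebra whose reduction modulo $\hbar$ is the $P_1$ algebra $A$. Flatness of $\til A$ over $\C[[\hbar]]$ is part of the $BD_1$ data, and $\star$ is $\C[[\hbar]]$-linear and compatible with the differential, so $(\til A,\star)$ is a flat family of dg associative algebras over $\C[[\hbar]]$. Reducing the first $BD_1$ relation $a\star b - (-1)^{\abs a\abs b}b\star a = \hbar[a,b]$ modulo $\hbar$ shows that $\star$ is graded-commutative modulo $\hbar$, hence agrees with the product of $A$; and the same relation, read to first order in $\hbar$, says that the failure of $\star$ to commute is $\hbar$ times $[-,-]$, whose reduction modulo $\hbar$ is the Poisson bracket of $A$ since $\til A$ lifts $A$ as a $P_1$ algebra. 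So $(\til A,\star)$ is a deformation quantization of $A$.

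Conversely, given a deformation quantization $(\til A,\star)$, I would define $[a,b] = \hbar^{-1}(a\star b - (-1)^{\abs a\abs b}b\star a)$. This makes sense precisely because $\star$ is graded-commutative modulo $\hbar$, so the graded commutator lies in $\hbar\til A$, and because $\til A$, being flat over $\C[[\hbar]]$, is $\hbar$-torsion-free, so the division by $\hbar$ is unambiguous and stays in $\til A$. The bracket $[-,-]$ is then $\C[[\hbar]]$-linear, of degree $0$, graded-antisymmetric, and a chain map since $\star$ is. The two $BD_1$ axioms are formal consequences: the first is the defining formula multiplied by $\hbar$; for the graded Jacobi identity and the Leibniz rule $[a\star b,c] = a\star[b,c] + (-1)^{\abs b\abs c}[a,c]\star b$, one notes that over $\til A[\hbar^{-1}]$ the bracket $[-,-]$ is a rescaling of the associative graded commutator of $\star$, hence satisfies both identities there — for Leibniz, multiplying by $\hbar$ and expanding both sides reduces each, using associativity of $\star$, to $(a\star b)\star c - (-1)^{(\abs a + \abs b)\abs c}c\star(a\star b)$ — and since $\til A \subset \til A[\hbar^{-1}]$ and all the relevant terms already lie in $\til A$, the identities hold in $\til A$ as well. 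Finally, modulo $\hbar$ the bracket is the Poisson bracket of $A$ by the first-order hypothesis, so $(\til A,\star,[-,-])$ is a $BD_1$ algebra lifting the $P_1$ algebra $A$.

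These two constructions are mutually inverse: passing from a $BD_1$ algebra to a deformation quantization just forgets the Lie bracket, while the first $BD_1$ relation together with $\hbar$-torsion-freeness shows that the Lie bracket of any $BD_1$ algebra must equal $\hbar^{-1}$ times the $\star$-commutator, which is exactly the bracket produced by the reverse construction. The one genuine subtlety in the argument is the divisibility-by-$\hbar$ step: one must use flatness of $\til A$ over $\C[[\hbar]]$ both to make sense of the formula for $[-,-]$ and to deduce the $BD_1$ relations in $\til A$ from their counterparts over $\til A[\hbar^{-1}]$. Everything else is sign bookkeeping with graded commutators, and compatibility with the differential comes for free because $\star$ is a chain map.
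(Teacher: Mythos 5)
Your proof is correct and follows the same route as the paper's: define $[a,b] = \hbar^{-1}\bigl(a\star b - (-1)^{\abs a\abs b}b\star a\bigr)$, using flatness over $\C[[\hbar]]$ to make sense of the division, and observe that the $BD_1$ relations are then immediate from associativity of $\star$. You fill in more detail (explicit sign bookkeeping, the $\hbar$-torsion-freeness point, verifying the Leibniz identity, and the mutual-inverse check) where the paper declares the verification clear, but the underlying argument is identical.
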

\begin{proof}  Indeed, let $\til{A}$ be a deformation quantization of $A$.  If $\star$ denotes the product on $\til{A}$, then the bracket $a \star b - (-1)^{\abs{a} \abs{b}} b \star a$ lands in the image of $\hbar$.  Thus, we can define a Lie bracket on $\til{A}$ by
$$
[a,b] = \hbar^{-1} \left( a \star b - (-1)^{\abs{a} \abs{b}} b \star a\right).
$$
It is clear that, with the product $\star$ and bracket $[-,-]$, $\til{A}$ defines a $BD_1$ algebra which reduces modulo $\hbar$ to the Poisson algebra $A$.   The converse is clear.  
\end{proof}
One way to think about this construction is as follows.  We can think of an operad $P$ as the universal multi-category containing a $P$-algebra.  Thus, the operad $P_1$ is, in this sense, the universal Poisson algebra.  The operad $BD_1$ plays the role of the universal deformation quantization. 

\subsection{}
We will follow this operadic approach when formulating the notion of quantization of a $P_0$ algebra.    
\begin{definition}
Let $P_0$ denote the operad whose algebras are $P_0$-algebras as above.  The BD operad is the differential graded operad over $\C[[\hbar]]$ which, as a graded operad, is simply $P_0 \otimes \C[[\hbar]]$; but where the differential is given by 
$$
\d \ast = \hbar \{-,-\}.
$$
\end{definition}

Note that $\op{BD}$ is a flat family of operads over $\C[[\hbar]]$, which reduces mod $\hbar$ to $P_0$.  Further, when we invert $\hbar$, the operad $\op{BD}$ becomes contractible: the cohomology of $\op{BD}(n)[\hbar^{-1}]$ vanishes when $n > 0$, and the cohomology when $n = 0$ is one dimensional, corresponding to the unit element of a $P_0$ algebra.  
\begin{definition}
A quantization of a $P_0$ algebra $A$ is a BD algebra $\til{A}$, flat over $\C[[\hbar]]$, which reduces mod $\hbar$ to the $P_0$ algebra $A$.
\end{definition}

In section \ref{section_factorization}, we explained that the observables of a classical field theory form a $P_0$ factorization algebra, and explained that the observables of a quantum field theory form a factorization algebra over $\C[[\hbar]]$ which reduces modulo $\hbar$ to the observables of the classical field theory.  
\begin{definition}
Suppose we have a classical field theory on a manifold $M$. Let $\til{\Obs}^{cl}_{\L}(U)$ be the $P_0$ factorization algebra of classical observables.  Then, a \emph{strong quantization} of this $P_0$ factorization algebra is a $BD_0$ factorization algebra $\Obs^q$ with a quasi-isomorphism of $P_0$ algebras $\Obs^q\op{mod} \hbar \simeq \til{\Obs}^{cl}_{\L}$. 
\end{definition}
\begin{remark}
In \cite{CosGwi11}, we prove a general theorem allowing one to construct a factorization algebra of quantum observables for every quantum field theory in the sense of \cite{Cos11}.  This implies that we can quantize the factorization algebra of classical observables using obstruction theory. However, we have not proved that the factorization algebra of quantum observables has a $BD_0$ structure (although we conjecture that this is the case).  Instead, our quantum factorization algebra satisfies a weaker compatibility condition with the $P_0$ structure on classical observables.  
\end{remark}

\subsection{}
The main result of this section will relate quantizations of certain $P_0$ algebras to ``projective volume forms'' on formal moduli problems.  Thus, I need to explain what I mean by a projective volume form on a formal moduli problem.  

\begin{definition}
Let $X$ be a complex manifold.  A \emph{projective volume form} on $X$ is a flat connection on the canonical bundle $K_X$.  Equivalently, it is a trivialization of the $\Oo_X^\times / \C^\times$-torsor associated to  $K_X$.
\end{definition}
Note that what we call a projective volume form is \emph{not} the same as a volume form on $X$ up to scalar multiplication.  Locally, the two notions coincide.  Globally, however, the flat connection on $K_X$ may have non-trivial monodromy: this provides an obstruction to lifting a projective volume form to a volume form. 
\begin{lemma}
A projective volume form on $X$ is the same as a right $D_X$-module structure on $\Oo_X$. 
\end{lemma}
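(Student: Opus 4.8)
The plan is to reduce the statement to the classical dictionary between flat connections on a vector bundle and left $D$-module structures on it, via the side-changing operation for $D$-modules.

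Recall first that $K_X$ carries a canonical right $D_X$-module structure, in which a local vector field $\xi$ acts on a top form by minus the Lie derivative, $\omega \cdot \xi = -\mathrm{Lie}_\xi\, \omega$. More generally, if $M$ is a left $D_X$-module then $K_X \otimes_{\Oo_X} M$ is naturally a right $D_X$-module, and $N \mapsto \Hom_{\Oo_X}(K_X, N)$ is a quasi-inverse; these two functors are mutually inverse equivalences between left and right $D_X$-modules. Unwinding this for the underlying $\Oo_X$-module $\Oo_X$: a right $D_X$-module structure on $\Oo_X$ extending its tautological $\Oo_X$-module structure is precisely a left $D_X$-module structure on $\Hom_{\Oo_X}(K_X,\Oo_X) = K_X^\vee$ extending the tautological $\Oo_X$-module structure on the anticanonical bundle.

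Next I would invoke the standard equivalence: for a finite-rank locally free $\Oo_X$-module $E$, a left $D_X$-module structure on $E$ compatible with its $\Oo_X$-action is the same as a flat connection $\nabla$ on $E$. Indeed, the action of a vector field $\xi$ gives an operator $\nabla_\xi$; the relation $\xi f - f \xi = \xi(f)$ in $D_X$ gives the $\Oo_X$-linearity of $\xi \mapsto \nabla_\xi$ together with the Leibniz rule $\nabla_\xi(f s) = \xi(f) s + f \nabla_\xi s$; and the relation $\xi\eta - \eta\xi = [\xi,\eta]$ gives the flatness identity $\nabla_{[\xi,\eta]} = \nabla_\xi \nabla_\eta - \nabla_\eta \nabla_\xi$. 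Applied to $E = K_X^\vee$, this identifies right $D_X$-module structures on $\Oo_X$ with flat connections on $K_X^\vee$, and passing to dual connections identifies the latter with flat connections on $K_X$ --- that is, with projective volume forms in the sense of the preceding definition. Concretely, in a local frame $\omega$ for $K_X$ the right action attached to a local connection one-form $a$ is $f \cdot \xi = -\xi(f) - a(\xi) f$, which is minus the $\omega$-divergence operator when $\nabla$ is the trivial connection.

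The argument is essentially formal, so there is no real obstacle; the one point demanding care is to check that \emph{both} halves of the dictionary --- side-changing, and ``flat connection $=$ left $D$-module structure on a bundle'' --- respect the tautological $\Oo_X$-module structures, so that one obtains a genuine bijection between the two indicated sets of structures rather than merely an equivalence of the ambient categories. Alternatively, one may bypass side-changing entirely and verify by hand, in a local trivialization of $K_X$, that the defining relations of a right $D_X$-action $\rho$ on $\Oo_X$ amount to the conditions that $\rho(\xi)$ has the form $f \mapsto -\xi(f) - a(\xi) f$ for a one-form $a$ with $\d a = 0$, i.e.\ that $a$ is the connection one-form of a flat connection on $K_X$; this is a brief unwinding of the relations in $D_X$.
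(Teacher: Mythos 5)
Your argument is correct and follows essentially the same route as the paper's proof: both pass through side-changing to convert a right $D_X$-module structure on $\Oo_X$ into a left $D_X$-module structure on $K_X^{-1}$, and then identify the latter with a flat connection on $K_X^{-1}$, hence on $K_X$. You supply more detail (the explicit side-changing functors, the relations in $D_X$ underlying the flat-connection dictionary, and an optional local verification), but the underlying argument is the one in the paper.
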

\begin{proof}
If $M$ is a right $D_X$-module, then $M \otimes K_X^{-1}$ is a left $D_X$-module.  Thus, a right $D_X$-module structure on $\Oo_X$ induces a left $D_X$-module structure on $K_X^{-1}$, that is, a flat connection on $K_X^{-1}$; and so a flat connection on $K_X$.    The converse is immediate.   
\end{proof}

\subsection{}
We are interested in projective volume forms on formal moduli problems.    I will follow a very helpful suggestion of Nick Rozenblyum, and \emph{define} a projective volume form on a formal moduli problem to be a right $D$-module structure on the structure sheaf.  The reason for this approach is that I don't know how to define the canonical sheaf of a formal moduli problem; presumably, the correct definition would involve some version of Grothendieck-Serre duality.  

Let us introduce some notation related to formal moduli problems.  Let $\g$ be an $L_\infty$ algebra (without an invariant pairing).  We will let $B \g$ denote the corresponding formal moduli problem; thus, $\Oo(B \g)$ will refer to the dga of cochains on $\g$, and so on. 

Let $\op{Vect}( B \g)$ be the dg Lie algebra of vector fields on $B \g$, that is, 
$$\op{Vect}( B \g) = C^\ast(\g, \g[1]) = \op{Der} (\Oo(B \g) ).$$ 
Let us define the associative algebra of differential operators $D (B \g)$ to be the free associative algebra generated over $\Oo(B \g)$ by $X \in \op{Vect}( B\g)$ subject to the usual relations:
\begin{align*}
X \cdot f - f \cdot X = (X f) \\
f \cdot X = f X 
\end{align*}
where $\cdot$ denotes the associative product in $D (B \g)$, and juxtaposition indicates the action of $\op{Vect}(B \g)$ on $\Oo(B \g)$ or the $\Oo(B \g)$-module structure on $\op{Vect}( B \g)$. 

\begin{definition}
A projective volume form on $B \g$ is a right $D(B\g)$-module structure on $\Oo( B \g)$. 
\end{definition} 

\subsection{}
Let $T^\ast [-1] B \g$ denote the formal moduli problem $B ( \g \oplus \g^\vee[-3])$.  Note that $T^\ast [-1]  B \g$ has a symplectic form of degree $-1$, so that $\Oo( T^\ast[-1] B \g)$ is a commutative dga equipped with a Poisson bracket of degree $1$.   In other words, $\Oo( T^\ast [-1] B \g )$ is a $P_0$ algebra.

Note that there's a $\C^\times$ action on $T^\ast [-1] B \g$ by scaling the cotangent fibres, that is, by scaling the $\g^\vee [-3]$ in the Lie algebra $\g \oplus \g^\vee [-3]$.    Under this $\C^\times$ action on $\Oo( T^\ast [-1] B \g )$, the Poisson bracket has weight $-1$, and the product has weight $0$.

One way to say this is to observe that there is a $\C^\times$ action on the $P_0$ operad, where the Poisson bracket has weight $-1$; and that $\Oo (T^\ast [-1] B \g)$ is a $\C^\times$-equivariant algebra. 

Note that the operad $BD_0$ also has a $\C^\times$, where the parameter $\hbar$ has weight $1$, the product has weight $0$ and the bracket has weight $-1$.  Thus, we can talk about $\C^\times$-equivariant quantizations of a $\C^\times$-equivariant $P_0$ algebra. 

\subsection{}
Possible right $D(B \g)$-module structures on $\Oo( B \g)$ form a simplicial set, as we can consider such objects in families over the algebra $\Omega^\ast(\tr^n)$ of forms on the $n$-simplex.

We can also define a simplicial set of quantizations of $\Oo(T^\ast[-1] B\g)$.   By the graded along of the Darboux lemma, $\Oo( T^\ast[-1] B \g)$ can not be deformed as a graded $P_0$ algebra (without a differential).  Thus, any quantization of $\Oo( T^\ast[-1] B \g)$ is given by a $BD_0$ structure on $\Oo( T^\ast[-1] B \g)[[\hbar]]$ with fixed underlying graded $P_0$ algebra.  Such a $BD_0$ structure is entirely specified by the differential, which must be compatible with the $P_0$ structure in the sense described above, and which must agree with the given differential modulo $\hbar$. 

We can thus define the simplicial set of quantizations by saying that the $n$-simplices are families of $BD_0$ structures on $\Oo( T^\ast[-1] B \g) \otimes \Omega^\ast(\tr^n) [[\hbar]]$, with fixed underlying $P_0$ structure, and linear over $\Omega^\ast(\tr^n)$. 

\begin{proposition}
There is a natural homotopy equivalence between the simplicial set of right $D(B \g)$-structures on $\Oo(B \g)$ and that of $\C^\times$-equivariant quantizations of the $P_0$ algebra $\Oo(T^\ast [-1] B \g)$.  
\end{proposition}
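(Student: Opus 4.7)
The plan is to unpack the notion of $\C^\times$-equivariant quantization, recognize it as the classical notion of a BV (``divergence'') operator on the polyvector fields of $B\g$, and then apply the Koszul correspondence between such divergences and right $D(B\g)$-module structures on $\Oo(B\g)$.

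First, I would analyze what a $\C^\times$-equivariant quantization differential actually is. Writing $d=d_0+\hbar d_1+\hbar^2 d_2+\cdots$, the defining $BD_0$ relation $d(ab)-(da)b-(-1)^{|a|}a(db)=\hbar\{a,b\}$ forces, order by order, $d_0$ to be a derivation of the commutative product (given), $d_1$ to be a second-order BV operator whose symbol is the Poisson bracket, and each $d_i$ with $i\ge 2$ to be a derivation. Placing $\hbar$ in $\C^\times$-weight $1$, equivariance forces $d_i$ to have cotangent weight $-i$. Since the generators $\g[2]$ of the cotangent direction have weight $1$ and $\Oo(T^\ast[-1]B\g)$ is supported in non-negative cotangent weight, any weight-$(-i)$ derivation must vanish once $i\ge 2$. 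The conclusion is that a $\C^\times$-equivariant quantization is exactly the datum of a square-zero, $d_0$-compatible BV operator $\Delta$ of cohomological degree $1$ and cotangent weight $-1$ on $\Oo(T^\ast[-1]B\g)$.

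Next, I would use the $\C^\times$-grading to identify $\Oo(T^\ast[-1]B\g)$ with the polyvector fields $\Gamma(B\g,\wedge^\ast T_{B\g})=\what{\Sym}_{C^\ast(\g)}(\g[2])$, equipped with their Schouten--Nijenhuis bracket; under this identification a weight $-1$ BV operator is precisely a divergence operator in the classical BV sense. Now the Koszul correspondence enters: a flat divergence on the polyvector fields of $B\g$ is the same datum as a right $D(B\g)$-module structure on $\Oo(B\g)$. Concretely, given a divergence $\Delta$, one defines a right action $f\cdot X=-Xf+f\,\Delta(X)$ of $\op{Vect}(B\g)$ on $\Oo(B\g)$, and extends to $D(B\g)$ using the presentation given in the excerpt; conversely, the divergence is read off from the right action of a single vector field and then extended to higher polyvector degree by the BV Leibniz identity. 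The conditions $\Delta^2=0$ and $[d_0,\Delta]=0$ correspond exactly to associativity and chain-map compatibility of the $D(B\g)$-action.

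To promote this bijection to a homotopy equivalence of simplicial sets, I would observe that both sides are defined level-wise by replacing the ground ring with $\Omega^\ast(\tr^n)$ and demanding $\Omega^\ast(\tr^n)$-linear structures (a family of $BD_0$ differentials on one side; a family of right $D(B\g)$-actions on the other). Since the Koszul correspondence above is manifestly $\Omega^\ast(\tr^n)$-linear, it produces a bijection at each simplicial level, hence an isomorphism---and in particular a homotopy equivalence---of simplicial sets. The main obstacle in making this precise is checking the Koszul dictionary in the pro-nilpotent derived setting: the $L_\infty$ algebra $\g$ may carry higher brackets and a non-trivial internal differential, $C^\ast(\g)$ is only completed pro-nilpotently, and one must verify that the BV$\leftrightarrow$divergence$\leftrightarrow D$-module dictionary holds at the level of underlying data (not merely up to quasi-isomorphism) in this completed $L_\infty$ context. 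This amounts to the extension of Koszul's classical theorem \cite{Kos85} in the spirit of \cite{BeiDri04} which is referenced in the excerpt.
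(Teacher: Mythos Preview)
Your proposal is correct and follows essentially the same route as the paper's own proof: reduce a $\C^\times$-equivariant $BD_0$ differential to the form $d_0+\hbar\Delta$ with $\Delta$ a second-order BV operator of cotangent weight $-1$, then translate $\Delta$ into a right $D(B\g)$-action via the divergence formula and observe that the correspondence is $\Omega^\ast(\tr^n)$-linear, giving a bijection of simplicial sets. Your order-by-order analysis of the $BD_0$ relation (showing $d_i$ is a derivation for $i\ge 2$ and then killing it by weight) is in fact more explicit than the paper, which simply asserts that $\C^\times$-invariance forces the differential into this shape; and your framing via the Koszul divergence/right-$D$-module dictionary is exactly what the paper carries out by hand through the map $\Phi(X)=1\cdot\rho(X)$ and the identity $(\dagger)$.
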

\begin{proof}
Suppose we have a right $D(B\g)$-module structure on $\Oo(B \g)$.  If $V \in D(B \g)$ and $f \in \Oo(B \g)$, we will let $f \rho(V) \in \Oo(B \g)$ be the result of applying $V$ to $f$ using the right $D$-module structure.  Note that, by definition, for $g \in \Oo(B \g) \subset D(B \g)$, $f \rho(g) =  fg $.  Thus, for $X \in \op{Vect}(B \g)$, 
$$f \rho (X)   = 1 \rho(f) \rho(X) = 1 \rho( f X).$$
Thus, the entire action is determined by a linear map 
\begin{align*}
\Phi : \op{Vect}( B \g)  & \to \Oo ( B \g) \\
\Phi( X) = 1 \rho(X). 
\end{align*}
Note that the relations in $D (B \g)$ imply that $X \cdot f = (Xf )  + (-1)^{\abs{f} \abs{X}} f X$.  It follows that
\begin{equation}
\Phi (f X) - f \Phi(X) =  -  (-1)^{\abs{f} \abs{X}} (X f )      \in \Oo(B \g) \tag{$\dagger$} \label{eqn_dagger}.
\end{equation}

We will use the map $\Phi$ to define a quantization of $\Oo ( T^\ast [-1] B \g)$.  The underlying graded Poisson algebra of our quantization is $\Oo ( T^\ast [-1] B \g) [[\hbar]]$.  To describe the differential, let us introduce an auxiliary operator $\tr$ on $\Oo ( T^\ast [-1] B \g)$.  The operator $\tr$ is the unique order $2$ differential operator with the property that, for $f \in \Oo(B \g)$, and for $X \in \op{Vect}(B \g ) [1] \subset \Oo (T^\ast [-1] B \g)$, we have
\begin{align*}
\tr (f ) &= 0\\
\tr (X) &= \Phi(X).
\end{align*}
The fact that $\tr$ is well-defined follows from the fact that $\Phi$ is an order $1$ differential operator.   It is not hard to verify (from equation (\ref{eqn_dagger})) that the failure of $\tr$ to be a derivation is the Poisson bracket on $\Oo( T^\ast[-1] B \g)$.  Thus, we define the differential on our $BD_0$ algebra to be $\d + \hbar \tr$, where $\d$ is the usual differential on $\Oo( T^\ast[-1] B \g)$. 

Let us now consider the converse.  The simplicial set of quantizations we are considering has, for $n$-simplices, $BD_0$ structures on $\Oo(T^\ast[-1] B \g) \otimes \Omega^\ast(\tr^n) [[\hbar]]$ with fixed underlying graded $P_0$ algebra.   The $\C^\times$ invariance properties of the quantization force the differential to be of the form $\d + \hbar \tr$, where $\d$ is the given differential on $\Oo(T^\ast[-1] B \g)$, and $\tr$ is some operator mapping $\Gamma(B \g, \wedge^i T B \g ) \to \Gamma(B \g, \wedge^{i-1} T B \g)$.  The operator $\tr$ is determined uniquely by its behaviour on $\op{Vect}(B \g)$; restricted to this subspace, it must be a cochain map 
$$
\Phi_{\tr} : \op{Vect}( B \g) \to \Oo( B\g)
$$
satisfying the axiom in (\ref{eqn_dagger}).  

Thus, we have set up a bijection of simplicial sets between right $D(B \g)$-module structures on $\Oo ( B \g)$ and $\C^\times$-invariant quantizations. 
\end{proof}

\subsection{}
In this section we will discuss the $P_0$ structure on the classical observables of a cotangent field theory, and explain how quantization of a cotangent field theory leads to a projective volume form on the moduli of solutions to the equations of motion. 

Now, suppose we have a classical field theory on a compact manifold $M$, given by an elliptic $L_\infty$ algebra $\mscr{L}$ equipped with an invariant pairing of degree $-3$.  Let $\mscr{L}(M)$ denote the global sections of $\mscr{L}$; note that, since $M$ is finite dimensional  and the differential on $\mscr{L}$ is elliptic, $\mscr{L}(M)$ has finite dimensional cohomology.  

Let $C^\ast(\mscr{L}(M))$ denote the Chevalley-Eilenberg cochain complex of the topological $L_\infty$ algebra $\mscr{L}(M)$.  By definition, $C^\ast(\mscr{L}(M))$ is a commutative differential graded algebra; one would expect that the invariant pairing on $\mscr{L}(M)$ induces a Poisson bracket of degree $1$ on $C^\ast(\mscr{L}(M))$.  

It turns out that a little work is require to produce this Poisson bracket, because of analytic difficulties inherent in the infinite-dimensional nature of $\mscr{L}(M)$.  In \cite{Cos11}, it is shown how to construct a canonical, up to contractible choice, $P_0$ algebra structure on $C^\ast(\mscr{L}(M))$.  (The results of \cite{Cos11} assume an additional technical condition, namely the existence of a gauge fixing condition.  This condition is easy to verify in all examples we consider here).

However, in order to avoid having to discuss any infinite-dimensional issues, I will explain how the results of \cite{Cos11} yield structures on the cohomology of $\mscr{L}(M)$.
\begin{lemma}
There is a canonical, up to contractible choice, $L_\infty$ structure on $H^\ast(\mscr{L}(M))$ for which the pairing on $H^\ast(\mscr{L}(M))$ is invariant.  Further, this $L_\infty$ algebra is  equipped with an $L_\infty$-equivalence to $\mscr{L}(M)$.
\end{lemma}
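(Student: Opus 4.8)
The plan is to recognize the statement as an instance of the homotopy transfer theorem (the homological perturbation lemma for $L_\infty$ algebras), refined so that it respects the invariant pairing, and to carry it out explicitly via Hodge theory. The first step is to fix a \emph{gauge-fixing operator}: choose a Riemannian metric on $M$ and Hermitian metrics on the graded bundle $L$, let $\d^\ast$ be the resulting formal adjoint of $\d$, and arrange --- as in \cite{Cos11} --- that in addition $\d^\ast$ is skew-self-adjoint for the invariant pairing $\ip{-,-}$ and that $[\d,\d^\ast]$ is a generalized Laplacian. Because $M$ is compact and $(\mscr{L}(M),\d)$ is an elliptic complex, Hodge theory then furnishes a decomposition $\mscr{L}(M)=\mc{H}\oplus\d\mscr{L}(M)\oplus\d^\ast\mscr{L}(M)$, orthogonal for both the $L^2$ inner product and (by the skew-adjointness of $\d^\ast$) for $\ip{-,-}$, in which the harmonic subspace $\mc{H}$ is finite-dimensional and the projection $p:\mscr{L}(M)\to\mc{H}$ identifies $\mc{H}\iso H^\ast(\mscr{L}(M))$.

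This decomposition assembles into a special deformation retract of $(\mscr{L}(M),\d)$ onto $(\mc{H},0)$: with the inclusion $i:\mc{H}\into\mscr{L}(M)$ and the homotopy $h=\d^\ast G$ (where $G$ is the Green's operator) one has $\mathrm{id}-ip=\d h+h\d$, $pi=\mathrm{id}$, $h^2=0$, $ph=0$ and $hi=0$. Feeding this retract into the tree-sum formulas of the homotopy transfer theorem --- vertices labelled by the brackets $l_n$ of $\mscr{L}(M)$, internal edges by $h$, leaves by $i$, root by $p$ --- yields transferred operations $\{l_n^{\mc{H}}\}$ making $\mc{H}$ an $L_\infty$ algebra together with an $L_\infty$-morphism $\mc{H}\rightsquigarrow\mscr{L}(M)$ whose linear term is $i$; since $i$ and $p$ are quasi-isomorphisms, this morphism is an $L_\infty$-equivalence. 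The phrase ``canonical up to contractible choice'' will be justified by noting that the space of admissible gauge-fixing operators is contractible (a standard convexity argument for the space of compatible metrics), and that the transferred structure together with its equivalence depends continuously on the operator, so any two choices are joined by a path of such data.

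It remains to check that the pairing on $\mc{H}\iso H^\ast(\mscr{L}(M))$ --- the restriction of $\ip{-,-}$, which by orthogonality of the Hodge decomposition is also the pushforward of the pairing on $\mscr{L}(M)$ along $p$ --- is invariant for the transferred structure, i.e.\ that $\ip{l_n^{\mc{H}}(a_1,\dots,a_n),a_{n+1}}$ is graded anti-symmetric in all $n+1$ arguments. Here I would run the standard cyclicity bookkeeping (Kontsevich--Soibelman, Kajiura): pairing a contributing tree against $a_{n+1}$ produces a ``circular'' diagram with $n+1$ external legs, and the three ingredients --- invariance of $\ip{-,-}$ on $\mscr{L}(M)$ (equivalently, cyclic symmetry of each $l_n$), skew-self-adjointness of $h$, and self-adjointness of the projector $ip$, all with respect to $\ip{-,-}$ --- make the sum of such diagrams invariant under cyclic rotation of the external legs; graded symmetry of the outermost pairing then promotes cyclic invariance to the required full (anti)symmetry. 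All three ingredients are exactly the properties secured by the choice of $\d^\ast$ in the first step, and one also reads off that the $L_\infty$-equivalence $\mc{H}\rightsquigarrow\mscr{L}(M)$ is compatible with the pairings.

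The one genuinely non-formal point, and the main obstacle, is therefore the existence of a gauge-fixing operator: a single $\d^\ast$ that is at once a formal adjoint with $[\d,\d^\ast]$ elliptic (so that the Hodge machinery applies and $\mc{H}$ is finite-dimensional) \emph{and} skew-self-adjoint for the degree $-3$ pairing (so that the Hodge decomposition is $\ip{-,-}$-orthogonal and the cyclicity argument goes through). Using the isomorphism $L\iso L^\vee\otimes\op{Dens}(M)[-3]$ induced by the pairing, $\d$ becomes a skew operator, and one produces $\d^\ast$ from an auxiliary metric and corrects it to be pairing-skew without destroying ellipticity of $[\d,\d^\ast]$; this is precisely the discussion of gauge-fixing operators in \cite{Cos11}, which I would invoke. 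Everything downstream is formal homological algebra.
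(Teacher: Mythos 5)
Your proposal is correct and is exactly the argument the paper has in mind: the paper dispatches this lemma with the single sentence ``This $L_\infty$ structure is given by the familiar homotopical transfer of structures,'' having already invoked the gauge-fixing conditions of \cite{Cos11} in the preceding paragraph. Your write-up supplies precisely the details that sentence elides — the Hodge-theoretic special deformation retract from a compatible gauge-fixing operator, the tree-sum transfer formulas, the cyclic bookkeeping showing the transferred brackets remain invariant for the pairing, and contractibility of the space of gauge-fixing data to justify ``canonical up to contractible choice.''
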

This $L_\infty$ structure is given by the  familiar homotopical transfer of structures. 

The Chevalley-Eilenberg  cochain complex $C^\ast(H^\ast(\mscr{L}(M)))$ is thus equipped with a $P_0$ structure, arising from the invariant pairing on $H^\ast(\mscr{L}(M))$.

In \cite{Cos11}, a definition of \emph{quantization} of a classical field theory is presented.  Part of the data of a quantization of a classical field theory is a quantization of the $P_0$ algebra $C^\ast(H^\ast(\mscr{L}(M)))$, in the operadic sense discussed earlier.    Our discussion about the relationship between $BD_0$ algebras and projective volume forms now shows the following.
\begin{lemma}
Suppose that $\mscr{L}_0(M)$ is an elliptic moduli problem on $M$, and that  $\mscr{L}(M) = \mscr{L}_0(M) \oplus \mscr{L}^!_0(M)[-3]$ is the corresponding cotangent field theory.

Then, a $\C^\times$-invariant quantization of this cotangent theory (using the definitions of \cite{Cos11}) yields a projective volume form on the formal moduli problem $\mscr{L}_0(M)$.  
\end{lemma}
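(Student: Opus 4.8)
The plan is to chain together the structural results already in place, reducing the statement to the Proposition identifying $\C^\times$-equivariant quantizations of $\Oo(T^\ast[-1]B\g)$ with right $D(B\g)$-module structures on $\Oo(B\g)$, applied to a suitable finite-dimensional $L_\infty$ algebra.

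First I would pass to finite dimensions. Since $M$ is compact and the differential on $\mscr{L}_0$ is elliptic, $\h := H^\ast(\mscr{L}_0(M))$ is finite-dimensional; by the homotopy-transfer lemma above it carries an $L_\infty$ structure together with an $L_\infty$-equivalence $\mscr{L}_0(M) \simeq \h$, so $B\h$ is a model for the formal moduli problem $\mscr{L}_0(M)$. The key point is to run the same transfer on $\mscr{L}(M) = \mscr{L}_0(M) \oplus \mscr{L}^!_0(M)[-3]$ compatibly with its cotangent structure. Because $\mscr{L}^!_0(M)[-3]$ is an $L_\infty$-ideal of $\mscr{L}(M)$, the projection $\mscr{L}(M) \to \mscr{L}_0(M)$ is a strict $L_\infty$-map; choosing transfer data on $\mscr{L}(M)$ compatible with this surjection and with the $\C^\times$-action scaling the cotangent fibres, one obtains a $\C^\times$-equivariant $L_\infty$-equivalence $H^\ast(\mscr{L}(M)) \simeq \h \ltimes \h^\vee[-3]$, where the invariant degree $-3$ pairing on cohomology is the evaluation pairing between $\h$ and $\h^\vee[-3]$. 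Here I use that the pairing on $\mscr{L}(M)$ induces a perfect pairing on cohomology --- a consequence of Poincar\'e/Serre duality for the elliptic complex $\mscr{L}_0$ on the compact manifold $M$ --- under which $H^\ast(\mscr{L}^!_0(M)[-3])$ is identified with $\h^\vee[-3]$. Thus, as a $\C^\times$-equivariant $L_\infty$ algebra with invariant pairing, $H^\ast(\mscr{L}(M))$ is precisely the one underlying $T^\ast[-1]B\h$.

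Next I would translate this into a statement about $P_0$ algebras and invoke the Proposition. The Chevalley--Eilenberg cochain complex $C^\ast(H^\ast(\mscr{L}(M)))$ is identified, $\C^\times$-equivariantly, with $\Oo(T^\ast[-1]B\h)$ as $P_0$ algebras, the degree $1$ Poisson bracket coming from the evaluation pairing and carrying weight $-1$ for the scaling action. By the definition of quantization in \cite{Cos11}, part of the data of a quantization of the classical field theory $\mscr{L}$ is exactly a quantization of this $P_0$ algebra in the operadic ($BD_0$) sense, and $\C^\times$-invariance of the quantization of the cotangent theory forces this $BD_0$ structure to be $\C^\times$-equivariant with $\hbar$ of weight $1$. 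Applying the Proposition above with $\g = \h$, such a $\C^\times$-equivariant quantization of $\Oo(T^\ast[-1]B\h)$ is the same datum as a right $D(B\h)$-module structure on $\Oo(B\h)$, i.e.\ a projective volume form on $B\h = \mscr{L}_0(M)$. The ``defined up to a scalar'' clause is built into the notion of projective volume form (equivalently, into the flat connection on the canonical bundle), so no further bookkeeping is needed.

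The main obstacle is the second step: showing that homotopy transfer can be carried out $\C^\times$-equivariantly and, more importantly, so as to preserve the cotangent/semidirect-product shape, so that $H^\ast(\mscr{L}(M))$ is genuinely equivalent to the $L_\infty$ algebra of a shifted cotangent space and not merely to some $L_\infty$ algebra carrying an invariant degree $-3$ pairing. Concretely one wants the transferred brackets mixing the $\h$ and $\h^\vee[-3]$ summands to be exactly those of the semidirect product $\h \ltimes \h^\vee[-3]$; this follows by transferring along a deformation retract chosen compatibly with the projection to $\mscr{L}_0(M)$ and with the pairing, but it is the place where the ``up to contractible choice'' and the equivariance must be handled with care (and where the gauge-fixing hypothesis of \cite{Cos11}, noted above to hold in all our examples, is used). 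Everything else --- the identification of cochain algebras, the recognition of the $P_0$ quantization inside the data of \cite{Cos11}, and the final appeal to the Proposition --- is formal.
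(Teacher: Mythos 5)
Your proposal is correct and follows essentially the same route as the paper: pass to the finite-dimensional cohomology $\h = H^\ast(\mscr{L}_0(M))$ via homotopy transfer, identify the transferred structure on $H^\ast(\mscr{L}(M))$ with $T^\ast[-1]B\h$, and apply the earlier Proposition equating $\C^\times$-equivariant quantizations of $\Oo(T^\ast[-1]B\h)$ with right $D(B\h)$-module structures. The one place where you go beyond what the paper makes explicit is in flagging and justifying the fact that the transferred $L_\infty$ structure on $H^\ast(\mscr{L}(M))$ is genuinely of semidirect-product (cotangent) form: you correctly observe that a $\C^\times$-equivariant choice of transfer data forces all brackets with two or more inputs from $\h^\vee[-3]$ to vanish (as there is nothing of weight $\geq 2$), and the invariance of the transferred pairing then pins down the remaining brackets as the coadjoint action, so that $C^\ast(H^\ast(\mscr{L}(M)))$ is literally $\Oo(T^\ast[-1]B\h)$ with its standard $\C^\times$-action. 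The paper leaves this step implicit (the preceding Lemma only produces an $L_\infty$ structure with invariant pairing, not a priori a shifted cotangent), so your added care is warranted and does not change the method.
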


The key concept in the definition of quantization presented in \cite{Cos11} is that of \emph{locality}.  This is reflected in the fact that possible quantizations of a classical field theory form a sheaf on $M$: so that the projective volume form on the moduli problem $\mscr{L}_0(M)$ is built up from local data on $M$. 

In the algebraic language presented in \cite{CosGwi11}, locality is reflected in the fact that the a quantization provides a $BD_0$ algebra\footnote{In the current incarnation of \cite{CosGwi11}, the theorem is a little weaker than I have stated: we have not yet constructed the $BD_0$ algebra on $\Obs^q(U)$ for each open subset $U$, we have only constructed these as cochain complexes.  We have, however, constructed the $BD_0$ structure on global observables $\Obs^q(M)$.  The stronger statement is work in progress . } quantizing not just $C^\ast(\mscr{L}(M))$, but also $C^\ast(\mscr{L} (U))$, for each open subset $U \subset M$.   We refer to this $BD_0$ algebra as the complex of quantum observables $\Obs^q(U)$.   As $U$ varies, the quantum observables form what we call a \emph{factorization algebra}.

\newcommand{\Spin}{\op{Spin}}
\newcommand{\PT}{\mbb{PT}}
\renewcommand{\P}{\mbb{P}}
\addcontentsline{toc}{part}{Supersymmetry}
\section{Basics of supersymmetry}
Many of the field theories of interest in mathematics arise as twists of supersymmetric field theories.  In this section I will say what it means for a field theory in $4$-dimensions to be supersymmetric,  and explain the twistor-space construction of certain supersymmetric gauge theories on $\R^4$  developed in \cite{BoeMasSki07,Wit04}. 

Before we start, I should say a few words about the gradings used in supersymmetry.  Supersymmetric field theories have two gradings: one by $\Z/2$, and one by $\Z$.  The first grading is the number of fermions, and the second is a cohomological grading, called in the physics literature the ``ghost number''.   Both gradings contribute to signs: if we move an element $\alpha$ of bidegree $(a_1,a_2)$ past an element $\beta$ of bidegree $(b_1,b_2)$, we introduce a sign of $(-1)^{(a_1 + a_2) ( b_1 + b_2) }$. 

When we deal with such bi-graded cochain complexes, the differential is of degree $(0,1)$.  In other words, the differential only affects the cohomological degree and not the fermionic degree.  We call such an object a \emph{super cochain complex}.

There are two possible ways of shifting a super-cochain complex: we can reverse the fermionic grading, or we can shift the cohomological grading.  If $V$ is a super-cochain complex, we will let $\pi V$ denote the same complex with fermionic grading reversed and cohomological grading unchanged.  We will let $V[-1]$ denote the same complex with fermionic grading unchanged, and cohomological grading shifted by one. 

We would like to adapt the definition of classical field theory given earlier to this bigraded context.    The main point to remember is that all algebraic structures we consider \emph{preserve} the fermion degree, and have the same cohomological degree as they do in the world of ordinary cochain complexes.   Thus, a \emph{super $L_\infty$ algebra} is a super cochain complex $V$, equipped with maps $l_n : V^{\otimes n} \to V$ of bidegree $(0,2-n)$, satisfying the usual $L_\infty$ relation.  

The relationship between $L_\infty$ algebras and formal moduli problems exists in the super context as well, where one defines a super formal moduli problem to be a functor on the category of super Artinian dg algebras. 

All of the definitions we gave earlier (like that of an elliptic $L_\infty$ algebra) can be defined without any difficulty in the super context.
\begin{definition}
Let $M$ be a manifold. A \emph{perturbative classical field theory with fermions} on $M$ is a super elliptic $L_\infty$ algebra $\mscr{L}$ on $M$ together with an invariant pairing on $\mscr{L}$ of bidegree $(0,-3)$.
\end{definition}
Given any super elliptic $L_\infty$ algebra $\mscr{L}$ on $M$, corresponding to a sheaf of formal super moduli problems on $M$, one can, as before, construct a classical field theory $\mscr{L} \oplus \mscr{L}^![-3]$.  This classical field theory will be called the \emph{cotangent field theory} associated to $\mscr{L}$. 

\subsection{}
First, let us give the definition of supersymmetry.  I will concentrate on dimension $4$ in Euclidean signature.   Suppose that $\mc M$ is a classical field theory on $\R^4$.  Thus, $\mc M$ is a formal elliptic moduli problem on $\R^4$, equipped with a symplectic form of degree $-1$.

Let us suppose that the classical field theory $\mc M$ is invariant under the group $\op{Spin}(4) \ltimes \R^4$, the double cover of the group of Euclidean symmetries of $\R^4$.   This means that this group acts on $\mc M(\R^4)$ in such a way that, if $g \in \op{Spin}(4) \ltimes \R^4$, the action of $g$ on $\mc M(\R^4)$ takes $\mc M(U) \subset \mc M(\R^4)$ to $\mc M(g(U))$. 

We will further assume that this action differentiates to an action of the Lie algebra $\mf{so}(4) \ltimes \R^4$ on $\mc M(U)$, for each $U \subset \mc M$.  

We will define a supersymmetric field theory on $\R^4$ to be a field theory equipped with an action of a certain super Lie algebra called the super Euclidean Lie algebra, extending the given action of the Euclidean Lie algebra.   

In Euclidean signature, this larger Lie algebra is only defined over $\C$, and not over $\R$.  Thus, to talk about supersymmetry in Euclidean signature, we need to use elliptic moduli problems which are defined over $\C$.  (For an elliptic moduli problem $\mc M$ to be defined over $\C$ just means that the corresponding $L_\infty$ algebra $\g_{\mc M}$ is a complex Lie algebra).

\subsection{}
Recall that there is an isomorphism of groups
$$
\Spin(4) \iso SU(2) \times SU(2).
$$
We will refer to the two copies of $SU(2)$ as $SU(2)_+$ and $SU(2)_-$.  

Let $\mc S_+$ denote the $2$ complex dimensional fundamental representation of $SU(2)_+$, endowed with the trivial $SU(2)_-$ action.  Thus, $\mc S_+$ is a complex representation of $\Spin(4)$.  Define $\mc S_-$ in the same way. 

Let $V$ denote the defining $4$-dimensional real representation of $SO(4)$.  There is an isomorphism of complex $\Spin(4)$ representations
$$
V_\C = V \otimes_\R \C \iso \mc{S}_+ \otimes \mc{S}_-.
$$

Given any complex vector space $W$, we can define a super-translation Lie algebra based on $W$.  The dimension of $W$ will be the number of supersymmetries.

The super-translation Lie algebra $T^{W}$ is defined to be the super Lie algebra
$$
T^{W} = V_\C \oplus \Pi ( \mc S_+ \otimes W \oplus \mc S_- \otimes W^\vee).
$$
Thus, the even part of $T^W$ is $V_\C$, and the odd part is $ \mc S_+ \otimes W \oplus \mc S_- \otimes W^\vee$.

The only non-trivial bracket on $T^{W}$ is between $\mc S_+ \otimes W$ and $\mc S_- \otimes W^\vee$.  If $\phi : \mc S_+ \otimes \mc S_- \to V_\C$ is the natural map, then the bracket is defined by the formula
$$
[s \otimes w, s' \otimes w'] = \phi ( s \otimes s') \ip{w,w'}
$$
for $s \in \mc S_+$, $s' \in \mc S_-$, $w \in W$, $w' \in W^\vee$. 

We will often use the notation 
$$
T^{\mscr{N}=k} = T^{\C^k}
$$
to refer to the super-translation Lie algebra associated to $\C^k$.  The cases of interest are when $k = 1,2,4$. 

Note that there is a natural action of $\op{Spin}(4)$ on $T^{W}$.  Thus, the complexified Lie algebra $\mf{so}(4,\C)$ acts on $T^{W}$, so that we can define the semi-direct product $\mf{so}(4,\C) \ltimes T^{W}$. This is the (complexified) super Euclidean Lie algebra.  

Also, the group $GL(W)$ acts on $T^{W}$, in a way commuting with the natural action of $\op{Spin}(4)$.  
\begin{definition}
A field theory on $\R^4$ with $\mscr{N}=k$ supersymmetries is a $\op{Spin}(4)  \ltimes \R^4$-invariant super elliptic moduli problem $\mc M$ defined over $\C$ with a symplectic form of cohomological degree $-1$; together with an extension of the action of the complexified Euclidean Lie algebra $\mf{so}(4,\C) \ltimes V_\C$ to an action of the complexified super-Euclidean Lie algebra $\mf{so}(4,\C) \ltimes T^{\mscr{N}=k}$.

Given any complex Lie subgroup $G \subset GL(k,\C)$, we say that such a supersymmetric field  theory has $R$-symmetry group $G$ if the group $G$ acts on the theory in a way covering the trivial action on space-time $\R^4$, and compatible with the action of $G \subset GL(k,\C)$ on $T^{\mscr{N}=k}$. 
\end{definition}
\begin{remark}
Although in this paper we are mostly content with a complex space of fields, reality conditions are important in physics. One way they appear is in the path integral: when we have a complex space of fields with an action functional which is holomorphic, the path integral is performed over a contour, which is a real slice of the space of fields.  In perturbation theory, the contour is irrelevant; however, it appears when we want to include contributions from non-trivial solutions to the equations of motion (e.g. instantons). For example, we have seen that quantization of a cotangent theory leads to a (projective) volume form on the space of solutions to the equations of motion, which in the case of a complex theory will be a holomorphic volume form on a complex manifold.  The partition function should be the integral of this volume form; in order to perform this integral, we need to choose a real slice of the space of solutions to the equations of motion. 
\end{remark}
\subsection{Examples}
Now that we have a definition of a supersymmetric classical field theory, I should give some examples. 

Supersymmetric gauge theories are difficult to write down explicitly on real space.  The more supersymmetry, the harder it is to write down a supersymmetric theory.  For the $\mscr{N}=1$ theory, one can use the ``superspace formalism'' \cite{DelFre99} to construct supersymmetric field theories.  The fields of the theories are geometric objects (such as connections) on the supermanifold $\R^{4 \mid 4}$, satisfying certain constraint equations.  For $\mscr{N}=2$ theories, there is also a superspace formalism, but the constraints the fields need to satisfy become more complicated.  For the $\mscr{N}=4$ theory, no superspace formulation is known. 

On the other hand, it has become clear in recent years \cite{Wit04,BoeMasSki07} that twistor theory provides a simple uniform construction of all supersymmetric gauge theories.  

For this reason, I will describe the twistor-space formulation of supersymmetric gauge theories. I will start by briefly recalling the classical Penrose-Ward correspondence, relating holomorphic bundles on twistor space and anti-self-dual bundles on $R^4$.  Then, I'll describe the super-twistor space in some detail, and state the super analogue of the Penrose-Ward corresponence.  

\subsection{}  
Recall \cite{WarWel91} that the twistor space $\PT$ of $\R^4$ can be identified with the complement of a $\P^1$ on $\P^3$, or equivalently with the total space of $\Oo(1) \oplus \Oo(1) \to \P^1$.

The group of Euclidean (and even conformal) symmetries of $\R^4$ acts on twistor space.  Let us rewrite twistor space in a way which makes this action evident.  

As before, let $\mc S_+$ and $\mc S_-$ denote the spinor representations of $\Spin(4) = SU(2) \times SU(2)$, defined as the pullback of the fundamental representations of the two copies of $SU(2)$.  The more invariant definition of the twistor space is that it is 
$$
\PT = \Oo(1) \otimes \mc S_- \to \P(\mc S_+),
$$ 
the total space of the rank $2$ vector bundle $\Oo(1) \otimes \mc S_-$ over $\P(\mc S_+)$. In this presentation, the action of the complex group 
$$SL(\mc S_-) \times SL(\mc S_+) = \op{Spin}(4,\C)$$ on $\PT$ is evident.

Next, we need to construct an action of the group of translations of $\R^4$ on twistor space $\PT$.  This action will preserve the fibration $p : \PT \to \P(\mc S_+)$.  Note that the relative tangent bundle to $p$ is $p^\ast \Oo(1) \otimes \mc S_-$.  There is a natural map
$$
H^0 ( \P(\mc S_+), \Oo(1) \otimes \mc S_- ) \to H^0(\PT, p^\ast (\Oo(1) \otimes \mc S_- ). 
$$
There is a canonical isomorphism
$$
H^0 ( \P(\mc S_+), \Oo(1) \otimes \mc S_- ) = \mc S_+ \otimes \mc S_-
$$
and $\mc S_+ \otimes \mc S_-$ is (as a $\op{Spin}(4)$ representation) the complexification of $\R^4$.  In this way, we have construct a $\op{Spin}(4)$-equivariant map
$$
\C^4 \to H^0 (\PT, T \PT ) 
$$
and so the desired action of translations on $\R^4$ on the twistor space.

\subsection{}
Next, we will construct the twistor fibration, which is a non-holomorphic fibration $\PT \to \R^4$. 

Note that the we can identify the space of sections of the bundle $\Oo(1) \otimes \mc S_- \to \P(\mc S_+)$ with the space of $\P^1$'s in $\PT$ which project isomorphically onto $\P(\mc S_+)$.  We have seen above that this space of sections is a copy of $\C^4 = \R^4 \otimes \C$.

It follows that, for each $x \in \R^4$, there is a corresponding $\P^1_x \in \PT$, namely the image of the section corresponding to 
$$
x \in \R^4 \subset \R^4 \otimes \C = \mc S_+ \otimes \mc S_-.
$$
This $\P^1$ is called the \emph{twistor line} corresponding to $x \in \R^4$.

A standard lemma is the following.
\begin{lemma}
For $x,y \in \R^4$, the twistor lines $\P^1_x$, $\P^1_y$ are disjoint.  Further,  every point in twistor space is in a unique $\P^1_x$ for some $x \in \R^4$.
\end{lemma}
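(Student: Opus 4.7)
The plan is to translate both claims into linear-algebraic statements about the evaluation map and then exploit the standard identification of $\R^4\subset\mc S_+\otimes\mc S_-$ with the quaternions. Under the isomorphism $H^0(\P(\mc S_+),\Oo(1)\otimes\mc S_-)\iso \mc S_+\otimes\mc S_-$, an element $x\in\mc S_+\otimes\mc S_-\iso\mc S_+^\vee\otimes\mc S_-$ corresponds to a section $\sigma_x$, which can equivalently be viewed as a linear map $\hat x:\mc S_+\to\mc S_-$. The twistor line $\P^1_x$ is by definition the image of $\sigma_x$, and $\sigma_x$ has a zero at $[v]\in\P(\mc S_+)$ iff $\hat x(v)=0$, i.e.\ iff $\det\hat x=0$ (viewing $\hat x$ as a $2\times2$ matrix after choosing bases).

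The key linear-algebraic input is the identity
\[
\det\hat x \;=\; \tfrac12\langle x,x\rangle\qquad\text{for }x\in\R^4\subset\mc S_+\otimes\mc S_-,
\]
where $\langle\cdot,\cdot\rangle$ is the Euclidean inner product. This is standard: the real slice $\R^4$ is cut out by the quaternionic structure on $\mc S_+\otimes\mc S_-$ induced by the $\Spin(4)$-invariant symplectic forms on $\mc S_\pm$, and under any basis compatible with these symplectic forms the real points are exactly the matrices of the form $\begin{pmatrix}a&b\\-\bar b&\bar a\end{pmatrix}$, whose determinant is $|a|^2+|b|^2=\tfrac12\langle x,x\rangle$. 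Verifying this identity is the only step that requires an explicit computation; it is the main (mild) obstacle, although entirely routine.

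\emph{Disjointness.} If $p\in\P^1_x\cap\P^1_y$ lies over $[v]\in\P(\mc S_+)$, then $\sigma_x([v])=p=\sigma_y([v])$, so $\sigma_{x-y}$ vanishes at $[v]$, hence $\det\widehat{(x-y)}=0$. Since $x-y\in\R^4$, the identity above forces $\|x-y\|^2=0$, so $x=y$.

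\emph{Surjectivity with unique preimage.} Fix $p=([v],w)\in\PT$, and consider the complex-linear evaluation map
\[
\mathrm{ev}_{[v]}:\mc S_+\otimes\mc S_-\;\longrightarrow\;(\Oo(1)\otimes\mc S_-)_{[v]},\qquad x\mapsto\sigma_x([v]).
\]
Its kernel $K$ consists of those $x$ with $\hat x(v)=0$, i.e.\ matrices of rank $\le1$ having $v$ in their kernel; a dimension count gives $\dim_\C K=2$, so $K$ is a real $4$-plane in the real $8$-plane $\mc S_+\otimes\mc S_-$. By the identity above, any $x\in K\cap\R^4$ satisfies $\det\hat x=0$ and simultaneously $\det\hat x=\tfrac12\|x\|^2$, whence $x=0$. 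Thus $K\cap\R^4=0$, and comparing real dimensions ($4+4=8$) gives $\mathrm{ev}_{[v]}|_{\R^4}:\R^4\to(\Oo(1)\otimes\mc S_-)_{[v]}$ is an $\R$-linear isomorphism onto a real $4$-plane; since the target is itself a complex $2$-plane, hence a real $4$-plane, this map is a bijection. Therefore there is a unique $x\in\R^4$ with $\sigma_x([v])=w$, i.e.\ with $p\in\P^1_x$.
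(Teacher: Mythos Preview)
Your proof is correct and, for the disjointness claim, follows essentially the same line as the paper: both reduce to showing that the section $s_x$ has no zeros for nonzero $x\in\R^4$, and both deduce this from positive-definiteness of the Euclidean form on $\R^4\subset\mc S_+\otimes\mc S_-$. The paper packages this via a Darboux basis $\alpha,\beta$ of $\mc S_-$ and the observation that $s_x=f\alpha+g\beta$ has a zero iff $f,g$ are proportional, which forces $\ip{s_x,s_x}=0$; you package the same computation as $\det\hat x=\tfrac12\ip{x,x}$ via the quaternionic model. These are the same argument in slightly different coordinates.

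Where you differ is in the second claim. The paper simply declares it an ``easy exercise'' that every point of $\PT$ lies on some $\P^1_x$; you actually supply a proof, and a clean one: you observe that the fibrewise evaluation map $\mathrm{ev}_{[v]}:\mc S_+\otimes\mc S_-\to(\Oo(1)\otimes\mc S_-)_{[v]}$ has complex $2$-dimensional kernel $K$, that $K\cap\R^4=0$ by the same determinant identity, and then conclude by a real dimension count that $\mathrm{ev}_{[v]}|_{\R^4}$ is an $\R$-linear bijection onto the fibre. This transversality argument is a genuine addition over what the paper writes, and it has the pleasant feature of giving uniqueness and existence simultaneously rather than treating them separately.
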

\begin{proof}
Note that $\P^1_0$ is the zero section of the bundle $\Oo(1)\otimes \mc S_- \to \P(\mc S_+)$.  We need to verify that, for each $x \in \R^4$, the section $s_x$ of $\Oo(1) \otimes \mc S_-$ has no zeroes. 

To see this, observe that -- since $\mc S_+$ and $\mc S_-$ both have symplectic structures -- the space $\mc S_+ \otimes \mc S_-$ of sections of $\Oo(1) \otimes \mc S_-$ has a symmetric and non-degenerate inner product.  Recall that $\mc S_+ \otimes \mc S_- = \R^4 \otimes \C$; this inner product is the complexification of the Euclidean inner product on $\R^4$.   Let us choose a Darboux basis $\alpha,\beta$ for $\mc S_-$. Let us write our section $s_x$ of $\Oo(1) \otimes \mc S_-$ as 
$$s_x = f \alpha +  g\beta,$$ where $f,g \in H^0(\P(\mc S_+), \Oo(1))$.  Such a section has a zero if and only if $f$ and $g$ are proportional.  If $f$ and $g$ are proportional, $\ip{s_x, s_x} = 0$, where the inner product is the one on $\mc S_+ \otimes \mc S_-$ inherited from the symplectic forms on $\mc S_+$ and $\mc S_-$.  

Now, the subspace $\R^4 \subset \C^4$ is positive-definite with respect to this inner product, so the section corresponding to any $x \in \R^4$ has no zeroes.

It is an easy exercise to verify that every point in twistor space lies in the twistor line for some $x \in \R^4$.  
\end{proof}
  
Thus, there is a non-holomorphic fibration -- the \emph{twistor fibration} -- 
$$\pi : \PT \to \R^4$$ 
with the property that $\pi^{-1}(x) = \P^1_x$ for each $x \in \R^4$.  

\subsection{}
We are interested in twistor space (and its supersymmetric generalizations) as a tool to construct supersymmetric gauge theories.  We will discuss the non-supersymetric case first.

Recall that the Penrose-Ward correspondence \cite{WarWel91} states that there is a natural bijection between vector bundles on $\R^4$ equipped with anti-self-dual connections $\R^4$ , and holomorphic vector bundles on $\PT$ which are trivial on every twistor fibre.   A refined version of the Penrose-Ward correspondence \cite{BoeMasSki07,Mov08} is the following.
\begin{theorem}
The anti-self-dual Yang-Mills theory on $\R^4$ (i.e. the cotangent theory to the elliptic moduli problem of anti-self-dual instantons) is equivalent to the cotangent theory for holomorphic vector bundles on $\PT$, which are trivial on every twistor fibre. 
\end{theorem}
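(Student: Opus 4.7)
The plan is to match the two elliptic $L_\infty$ algebras by derived pushforward along the twistor fibration $\pi : \PT \to \R^4$. Both theories are cotangent theories, so it is enough to produce an equivalence of the underlying elliptic moduli problems together with the natural duality between $\L$ and $\L^!$: the shifted symplectic form then matches by functoriality of the cotangent construction. Near a fixed holomorphic bundle $\mc{E}$ on $\PT$, the moduli problem is controlled by $\Omega^{0,\ast}(\PT, \g_{\mc{E}})$ with its $\dbar$ differential; near a fixed ASD connection on $\R^4$ the moduli problem is controlled by the Atiyah--Hitchin--Singer complex
$$
\Omega^0(\R^4, \g_P) \xrightarrow{\d} \Omega^1(\R^4, \g_P) \xrightarrow{\d_-} \Omega^2_-(\R^4, \g_P).
$$

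The first step is to compute $R\pi_\ast$ of the Dolbeault complex on $\PT$. Using the smooth identification $\PT \cong \R^4 \times \P^1$ produced by the twistor fibration, I would decompose $\dbar_{\PT} = \dbar_{\P^1} + \dbar_{\mathrm{horiz}}$ and filter by the vertical degree. The assumed triviality of $\mc{E}$ on every twistor fibre gives $H^0(\P^1_x, \mc{E}) = (\g_P)_x$ and $H^1(\P^1_x, \mc{E}) = 0$, so the $E_1$-page collapses to a single complex on $\R^4$. Identifying this horizontal complex is the classical Penrose calculation: the relative tangent bundle of $\PT \to \P^1$ is $\pi^\ast_{\P^1}(\Oo(1) \oplus \Oo(1))$, and after taking global sections on fibres the degree-$0,1,2$ pieces of the pushforward are canonically $\Omega^0$, $\Omega^1$, and $\Omega^2_-$, all twisted by $\g_P$, with differential $\d$ followed by anti-self-dual projection in top degree.

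For the cotangent pieces, the key input is $K_\PT \cong \pi^\ast_{\P^1} \Oo(-4)$: the relative tangent bundle has rank $2$ and determinant $\Oo(2)$, so $K_{\PT/\P^1} = \pi^\ast_{\P^1} \Oo(-2)$, and $K_\PT = K_{\PT/\P^1} \otimes \pi^\ast_{\P^1} K_{\P^1}$. Restricted to any twistor line, $H^0$ vanishes while Serre duality gives $H^1(\P^1, \Oo(-4)) \cong \C^3$, precisely the rank of $\Omega^2_-$. Running the same fibrewise cohomology argument for $\L^! = \Omega^{0,\ast}(\PT, \g_{\mc{E}}^\vee \otimes K_\PT)$ recovers the dual piece $\Omega^2_- \to \Omega^3 \to \Omega^4$ of the ASD BV complex, and compatibility of the invariant pairing of degree $-3$ reduces to a Fubini-type statement relating integration on $\PT$ to fibre integration over $\pi$ followed by integration on $\R^4$.

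The main obstacle is matching not just the underlying complexes but the full $L_\infty$ structure, i.e.\ the associated deformation functors on all dg Artinian rings. There are two natural routes. One is to check directly that the map induced by $R\pi_\ast$ is an $L_\infty$ morphism, using that on both sides the bracket is determined by the Lie bracket of $\g$ together with a graded-commutative product on forms, and that these products are compatible with fibre integration. A more conceptual route is to bootstrap from the classical Penrose--Ward correspondence cited in the statement: both formal moduli problems describe the formal neighbourhood of the same point in the same moduli space of instantons, and a formal pointed elliptic moduli problem is determined up to equivalence by its tangent $L_\infty$ algebra, which agrees on both sides by the previous steps. The most delicate technical point is the bookkeeping of signs and cohomological shifts in the Serre-dual identification for $K_\PT$, which must be carried out carefully to see that the pairing of degree $-3$ on the $\PT$ side maps to the BV pairing on the $\R^4$ side.
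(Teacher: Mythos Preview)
Your overall strategy matches the paper's: compare the two theories via pushforward along the twistor fibration $\pi: \PT \to \R^4$, computing fibrewise cohomology on the twistor lines. The paper itself does not give a self-contained proof of this theorem but cites \cite{BoeMasSki07,Mov08}; it then sketches the computation later (in the supersymmetric context), and that sketch agrees with your outline, including the identifications of $H^\ast_{\dbar}(\P^1, \Sym^\ast(\Oo(1)[-1]^{\oplus 2}))$ with $\Omega^0 \oplus \Omega^1[-1] \oplus \Omega^2_-[-2]$ and of the $\Oo(-4)$-twisted version with the dual piece $\Omega^2_-[-1] \oplus \Omega^3[-2] \oplus \Omega^4[-3]$.

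There is one technical difference worth flagging. The paper's sketch does not use a vertical/horizontal decomposition of $\dbar_{\PT}$ and a spectral sequence; instead it passes to the sub-sheaf $\mscr{L}_{\R^4} \subset \pi_\ast \mscr{L}_{hol}$ of sections which are harmonic on each twistor fibre, invokes Hodge theory to see that the inclusion is a quasi-isomorphism, and then uses homological perturbation theory to transfer the $L_\infty$ structure. Your decomposition $\dbar_{\PT} = \dbar_{\P^1} + \dbar_{\mathrm{horiz}}$ is not literally correct, because the twistor fibration $\PT \to \R^4$ is not holomorphic (there is no preferred complex structure on $\R^4$), so ``$\dbar_{\mathrm{horiz}}$'' is not defined without further work. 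The filtration you want does exist, but setting it up carefully takes more than you indicate. The harmonic-section approach sidesteps this entirely and simultaneously supplies a concrete mechanism (homological perturbation) for transferring the $L_\infty$ brackets, which is more explicit than either of the two routes you propose for matching them.
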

A proof of this is presented in \cite{BoeMasSki07} and \cite{Mov08}.    

Let me give a more precise version of the statement.  Let $G$ be a semi-simple algebraic group.  Let $P \to \PT$ be a principal $G$ bundle, and suppose that for every twistor fibre $\P^1 \subset \PT$, the $G$-bundle $P \mid_{\P^1}$ is trivial (but \emph{not} trivialized).  

The Penrose-Ward correspondence tells us that, associated to $P$, there is a $\cinfty$ $G$-bundle $\mscr{T}(P)\to \mbb{\R}^4$, equipped with a connection whose curvature is anti-self-dual.  

Note that we are considering the \emph{complex} anti-self-duality equations for a complex connection on a complex principal bundle.  In order to find solutions to the real anti-self-duality equation, one needs to put some reality structures on the holomorphic principal bundle $P \to \PT$.  We will, however, only consider the complex case.  

Given $P \to \PT$, we can consider the elliptic moduli problem of deformations of $P$. This is described by the elliptic Lie algebra
$$
\mscr{L}_{hol} = \Omega^{0,\ast} ( \PT, \g_P )
$$ 
where $\g_P$ adjoint bundle of Lie algebras associated to $P$.  The cotangent theory to this elliptic moduli problem is described by the elliptic Lie algebra
$$
T^\ast[-1] \mscr{L}_{hol} = \Omega^{0,\ast} (\PT, \g_P ) \oplus \Omega^{3,\ast} ( \PT, \g_P^\vee ) . 
$$

If $U \subset \R^4$, let $\PT(U) \subset \PT$ be the inverse image of $U$ under the twistor fibration $\pi : \PT \to \R^4$.    Our elliptic moduli problem on $\PT$ gives rise to a sheaf of dg Lie algebras on $\R^4$, which assigns to $U$ the dg Lie algebra 
$$
\pi_\ast \mscr{L}_{hol}  =   \Omega^{0,\ast}( \PT(U), \g_P ).
$$
This is not quite an elliptic Lie algebra on $\R^4$, because it is not built from sections of a finite-rank vector bundle on $\R^4$.  

However, it \emph{is} quasi-isomorphic to an elliptic Lie algebra on $\R^4$.     Recall that the moduli of deformations of the anti-self-dual Yang-Mills bundle $\mscr{T}(P)$ on $\R^4$ can be described by the elliptic Lie algebra
$$
\mscr{L}_{ASD} = \Omega^0(\R^4, \g_{\mscr{T}(P)} ) \to 
\Omega^1(\R^4, \g_{\mscr{T}(P)} ) \to 
\Omega^2_{+}(\R^4, \g_{\mscr{T}(P)} ) .
$$
Then, it is shown in \cite{BoeMasSki07,Mov08} that there is a homotopy equivalence of sheaves of Lie algebras on $\R^4$
$$
\mscr{L}_{SD} \simeq \pi_\ast \mscr{L}_{hol}.
$$
This homotopy equivalence extends to the cotangent theories: there is a homotopy equivalence
$$
T^\ast [-1] \mscr{L}_{SD} \simeq \pi_\ast T^\ast[-1] \mscr{L}_{hol},
$$
where $T^\ast [-1] \mscr{L}_{SD} $ is the elliptic Lie algebra describing the shifted cotangent bundle to the moduli of anti-self-dual bundles. 

This shows that the anti-self-dual Yang-Mills theory on $\R^4$ is equivalent to the cotangent theory for holomorphic bundles on the twistor space $\PT$.

\subsection{}
Next, we will discuss the supersymmetric version of this story, which will allow us to construct supersymmetric gauge theories. 

I gave the definition of a supersymmetric field theory on $\R^4$.  However, one can also define supersymmetric field theories on other spaces equipped with an action of the translation group $\R^4$: if $X$ is a manifold with such an action, then a supersymmetric field theory on $X$ is a field theory on $X$, invariant under the action of $\R^4$, together with an extension of the infinitesimal action of $\R^4$ to an action of the appropriate super-translation Lie algebra.    We will construct supersymmetric field theories on twistor space.

The supersymmetric extension of the Penrose-Ward correspondence states that there is an equivalence between holomorphic vector bundles on a graded complex manifold -- the super-twistor space -- and the anti-self-dual versions of supersymmetric gauge theories on $\R^4$.  In \cite{BoeMasSki07,Wit04} it is shown that this correspondence can be lifted to a homotopy equivalence of sheaves of $L_\infty$ algebras\footnote{These authors didn't use this language: they stated the result as an equivalence between classical field theories.  However, since the $L_\infty$ structure is encoded in the classical action functional, their results prove this statement.} on $\R^4$, where one sheaf is the elliptic $L_\infty$ algebra describing solutions to the super-symmetric anti-self-duality equations, and the other sheaf is the push-forward from the twistor space of the elliptic Lie algebra describing holomorphic bundles on the appropriate super-twistor space. 

The twistor space formulation of anti-self-dual supersymmetric gauge theory is far more transparent than the real-space formulation.  Thus, we will take the twistor space formulation as a definition.  

There are three versions of supersymmetric gauge theory: those with $\mscr{N}=1,2,4$ supersymmetry.  Thus, there are three corresponding super-twistor spaces.  
 
\subsection{}
We will present a uniform construction of super-twistor spaces.  For any $k \ge 0$, let us define a holomorphic super-manifold
$$
\PT^{\mscr{N}=k} = \Pi (\Oo(1) \otimes \C^k) \to \PT.
$$
Thus, $\PT^{\mscr{N}=k}$ is the complex $\Z/2$ graded manifold which is the total space of the purely odd bundle $\Oo(1) \otimes \C^k$   Let 
$$
\Oo_{\mscr{N}=k} = \wedge^\ast (\Oo(-1) \otimes \C^k)
$$
be the structure sheaf of $\PT^{\mscr{N}=k}$, viewed as a sheaf of super algebras on $\PT$.     Note that $\wedge^i (\Oo(-1) \otimes \C^k)$ is in fermion degree $i \mod 2$, and cohomological degree $0$.

Let $P \to \PT$ be a principal $G$-bundle.    Let $\g_P$ be the adjoint bundle of Lie algebras on $\PT$.   Let us consider the super elliptic moduli problem of holomorphic bundles on $\PT^{\mscr{N} = k}$ which are deformations of the bundle $\pi^\ast P$.

The corresponding elliptic Lie algebra is 
$$
\mscr{L}_{\mscr{N} = k } = \Omega^{0,\ast} (\PT, \Oo_{\mscr{N}=k} \otimes_{\Oo_{\PT}} \g_P ).
$$
Here, we have a mixture of cohomological and fermionic degrees: $\Omega^{0,i}(\PT, \wedge^j ( \Oo(-1) \otimes \C^k  ) \otimes \g_P )$ is in cohomological degree $i$ and fermionic degree $j \mod 2$. 

\begin{lemma}
The graded complex manifold $\PT^{\mscr{N}=k}$ has a natural action of the super-translation Lie algebra $T^{\mscr{N}=k}$.  Further, the composed projection
$$
\PT^{\mscr{N}=1} \to \PT \to \R^4
$$
is compatible with the action of the translation Lie algebra of $\R^4$ on all the three spaces. 

Further, this action is compatible with the natural action of $\op{Spin}(4,\C) \times GL(k,\C)$ on $\PT^{\mscr{N}=k}$. 
\end{lemma}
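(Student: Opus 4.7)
I would produce the action explicitly by writing down vector fields on $\PT^{\mscr{N}=k}$ in bijection with a basis of $T^{\mscr{N}=k}$, using exactly the same mechanism that produced the translation action on $\PT$ recalled above.

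Work in local homogeneous coordinates: let $\pi^A$ ($A=1,2$) be linear coordinates on $\mc{S}_+$, let $\omega^{A'}$ ($A'=1,2$) be fibre coordinates on $\Oo(1)\otimes\mc{S}_-$ (of weight $1$ under $\pi\mapsto\lambda\pi$), and let $\psi^a$ ($a=1,\dots,k$) be odd fibre coordinates on $\Pi(\Oo(1)\otimes W)$, $W=\C^k$, also of weight $1$.  Using the identification $H^0(\P(\mc{S}_+),\Oo(1)\otimes W) = \mc{S}_+\otimes W$ (and its companion for $W^\vee$), I would define the action of the three summands of $T^{\mscr{N}=k} = V_\C \oplus \Pi(\mc{S}_+\otimes W) \oplus \Pi(\mc{S}_-\otimes W^\vee)$ by
\begin{align*}
v \in V_\C & \;\longmapsto\; V_v,\ \text{the translation on $\PT$ lifted trivially in the $\psi$ direction,} \\
\xi\otimes w \in \mc{S}_+\otimes W & \;\longmapsto\; \ip{\pi,\xi}\, w^a\, \partial_{\psi^a}, \\
\eta\otimes w' \in \mc{S}_-\otimes W^\vee & \;\longmapsto\; \ip{\psi, w'}\, \eta^{A'}\, \partial_{\omega^{A'}}.
\end{align*}
Each expression has total weight zero under the rescaling $\pi\mapsto \lambda\pi$, so descends to a globally defined (super) vector field on $\PT^{\mscr{N}=k}$.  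All three formulas are manifestly $\Spin(4,\C)\times \GL(k,\C)$-equivariant for the natural actions on $\mc{S}_\pm$ and $W$, which settles the final sentence of the lemma.

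The main step is to verify the super-bracket relations of $T^{\mscr{N}=k}$.  Since the left partial derivatives $\partial_{\psi^a}$ (resp.\ $\partial_{\omega^{A'}}$) super-commute among themselves, a routine super-Leibniz computation yields $\{V_{\xi_1\otimes w_1},V_{\xi_2\otimes w_2}\} = 0$ and $\{V_{\eta_1\otimes w_1'},V_{\eta_2\otimes w_2'}\} = 0$; the only nontrivial anticommutator is the cross one, which after expanding via Leibniz and using $w^a w'_a = \ip{w,w'}$ gives
\[
\{V_{\xi\otimes w}, V_{\eta\otimes w'}\} \;=\; \ip{w,w'}\,\ip{\pi,\xi}\,\eta^{A'}\,\partial_{\omega^{A'}} \;=\; \ip{w,w'}\cdot V_{\phi(\xi\otimes\eta)},
\]
matching the defining relation $[s\otimes w, s'\otimes w'] = \phi(s\otimes s')\ip{w,w'}$ in $T^{\mscr{N}=k}$.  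The only real obstacle here is careful bookkeeping of Koszul signs in evaluating $\partial_{\psi^a}(\ip{\psi, w'}\,\cdots)$, so that the second-order differential-operator terms cancel between $V_{\xi\otimes w}V_{\eta\otimes w'}$ and $V_{\eta\otimes w'}V_{\xi\otimes w}$.  Finally, the compatibility of the composite fibration $\PT^{\mscr{N}=k}\to \PT\to \R^4$ with the translation action is immediate: by construction, the $V_\C$-action on $\PT^{\mscr{N}=k}$ is the trivial $\psi$-direction lift of its action on $\PT$, which itself covers translations of $\R^4$ by the construction of the twistor fibration.
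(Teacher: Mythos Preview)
Your proof is correct and follows essentially the same strategy as the paper. The paper phrases the construction invariantly---$\Pi\mc{S}_+\otimes W$ acts via global sections of $\Oo(1)\otimes\Pi W$ (fibrewise translations), and $\Pi\mc{S}_-\otimes W^\vee$ acts via the inclusion $\Pi W^\vee\otimes\mc{S}_-\subset\op{End}(\mc{S}_-\oplus\Pi W)$ (linear vector fields on the total space)---whereas you write the same vector fields in homogeneous coordinates; you also carry out the bracket computation explicitly, which the paper leaves as ``not difficult to verify.''
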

\begin{proof}
Let us change to more invariant notation. Let $W$ be a complex vector space of dimension $k$.  The super Lie algebra $T^{W}$ was defined earlier.  We will let $\PT^{W}$ refer to the total space of $\Pi ( \Oo(1) \otimes W)$ over $\PT$.   If $W = \C^k$ then, then $\PT^{W}$ is what we called $\PT^{\mscr{N}=k}$ above.

Thus, $\PT^{W}$ is the total space of the $\Z/2$-graded bundle 
$$
\Oo(1) \otimes \left( \mc S_  \oplus \Pi W \right) 
$$
over $\P(\mc S_+)$. 

We will use this presentation to define linear maps
\begin{align*}
\Pi \mc S_- \otimes W^\vee  &\to \op{Vect} (\PT^{W} ) \\
\Pi \mc S_+ \otimes W & \to \op{Vect} (\PT^{W} ).
\end{align*}
Every endomorphism of the bundle $\Oo(1) \otimes \left( \mc S_- \oplus \Pi W \right)$ yields a vector field on $\PT$.   Clearly, endomorphisms of the super vector space $\mc S_- \oplus \Pi W$ yield endomorphisms of this super vector bundle.   Since 
$$\Pi W^\vee \otimes \mc S_- \subset \op{End} ( \mc S_- \oplus \Pi W) $$
we get a map
$$
\Pi W^\vee \otimes \mc S_- \to \op{Vect} ( \PT^{W} ) .
$$ 

Next, we will construct the map
$$\Pi \mc S_+ \otimes W \to \op{Vect} ( \PT^{W} ).$$
Any section of the super vector bundle $\Oo(1) \otimes \left( \mc S_- \oplus \Pi W \right)$ yields a vector field on the total space of this bundle.  This gives us a map 
$$
H^0 \left( \P(\mc S_+), \Oo(1) \otimes \left(  \mc S_- \oplus \Pi W \right) \right) \to \op{Vect} ( \PT^{W} ) .
$$
In particular, we get a map
$$
H^0 (\P(\mc S_+),  \Oo(1) \otimes \Pi W ) = \Pi \mc S_+ \otimes W \to \op{Vect} ( \PT^{W}) .
$$
It's not difficult to verify that these maps satisfy the relations necessary to define an action of the super-translation Lie algebra $T^{W}$ on $\PT^{W}$, in a way compatible with the map $\PT^{\mscr{N}=1} \to \R^4$.   
\end{proof}

\subsection{}
Now we are ready to define the anti-self-dual versions of the $\mscr{N}=1$ and $\mscr{N}=2$ supersymmetric gauge theories. 

\begin{definition}
The anti-self-dual $\mscr{N}=1$ (respectively, $\mscr{N}=2$) gauge theory is the cotangent field theory for the super elliptic moduli problem on $\PT$ describing holomorphic bundles on the $\mscr{N}=1$ (respectively, $\mscr{N}=2$) twistor space $\PT^{\mscr{N}=1}$ (respectively, $\PT^{\mscr{N}=2}$). 
\end{definition}
Since the super-translation Lie algebra $T^{\mscr{N}=k}$ acts on the super-twistor space $\PT^{\mscr{N}=k}$, it is clear that this construction yields a super-symmetric field theory in the sense we described earlier   Further,  this theory has the largest possible $R$-symmetry $GL(k,\C)$.

\begin{remark}
I should emphasize that, in practise, the entire $R$-symmetry group will not act on the quantum theory.  As Anton Kapustin explained to me, in the ordinary $\mscr{N}=1$ gauge theory with no matter fields, the classical $R$-symmetry group $\C^\times$ has an anomaly at the quantum level: there,  only a discrete cyclic subgroup acts.
\end{remark}

Recall that $\Oo_{\mscr{N}=k}$ indicates the pushforward of the structure sheaf of $\PT^{\mscr{N}=k}$ to $\Oo_{\PT}$.  Let 
$$
\Oo^{\vee}_{\mscr{N}=k} = \Hom_{\Oo_{\PT}} (\Oo_{\PT}^{\mscr{N}=k}, \Oo_{\PT} ).
$$

By definition, the elliptic Lie algebra describing this cotangent theory is
$$
T^\ast[-1] \mscr{L}_{\mscr{N}=k} = \Omega^{0,\ast}\left( \PT,  \Oo_{\mscr{N}=k} \otimes_{\Oo_{\PT}} \g_P \oplus \Oo^\vee_{\mscr{N}=k} \otimes_{\Oo_{\PT}} K_{\PT} \otimes_{\Oo_{\PT}} \g^\vee_P \right).
$$
Here, $k = 1,2$ as above. The full $\mscr{N}=1$ or $\mscr{N}=2$ gauge theory is, as is described in \cite{BoeMasSki07}, given by a $1$-parameter family of deformations of the anti-self-dual theory.   

\subsection{}
The self-dual theories on $\R^4$ are obtained by dimensional reduction from the twistor space to $\R^4$. It is natural to ask for an explicit finite-rank model for the theories on $\R^4$.  This analysis was performed in detail in \cite{BoeMasSki07}, we will only sketch some aspects.    If $\mscr{L}^{N=k}$ is the elliptic Lie algebra on $\mbb{PT}$ describing a supersymmetric field theory, then we can push forward to $\R^4$ to find a sheaf $\pi_\ast \mscr{L}^{N=k}$ of dg Lie algebras on $\R^4$.  Let 
$$\mscr{L}_{\R^4}^{N=k} \subset \pi_\ast \mscr{L}^{N=k}$$
be the sub-sheaf constisting of those sections which are haromic on every twistor fibre. Hodge theory tells us that this inclusion is a quasi-isomorphism, and homological perturbation theory gives us an $L_\infty$-structure on $\mscr{L}_{\R^4}^{N=k}$ which is quasi-isomorphic to $\pi_\ast \mscr{L}^{N=k}$.  

The fibre of $\mscr{L}^{N=k}_{\R^4}$ can be calculated by calculating harmonic sections of various smooth bundles on $\mbb{P}^1$. Let's consider, for example, the part of $\mscr{L}^{N=k}_{\R^4}$ arising from $\Omega^{0,\ast}(\mbb{PT}, \Oo_{\PT})$.  Elements of this complex restrict, on $\mbb{P}^1$, to the Dolbealut complex of $\mbb{P}^1$ with coefficients in the exterior algebra of the complex conjugate of the conormal bundle.  Thus, we find that this summand of $\mbb{L}^{\mc{N}=k}_{\R^4}$ contributes
$$
H^\ast_{\dbar}( \mbb{P}^1, \Sym^\ast ( \Oo(1) [-1] \oplus \Oo(1)[-1]) ). 
$$
In degree $1$, we find $H^0(\mbb{P}^1, \Oo(1)^2) = \C^4$; under the action of $\op{Spin}(4)$, this is the vector representation.  So, as expected, we find that $\mscr{L}^{N=k}_{\R^4}$ contains $\Omega^1_{\R^4}$ in cohomological degree $1$. Similarly, in degree $2$, we find $H^0(\mbb{P}^1, \Oo(2))$, which is the fibre of $\Omega^2_{-}$ at the orgin in $\R^4$.  In sum, we have
$$
\cinfty(\R^4) \otimes H^\ast_{\dbar} (\mbb{P}^1, \Sym^\ast (\Oo(1)[-1] \oplus \Oo(1)[-1] ) = \Omega^0 \oplus \Omega^1[-1] \oplus \Omega^2_{-} [-2].
$$
Similarly, find that $\Omega^{0,\ast}(\mbb{PT}, \Oo(-4)$ contributes
$$
H^\ast_{\dbar}(\mbb{P}^1, \Oo(-4) \otimes \Sym^\ast ( \Oo(1)[-1] \oplus \Oo(1)[-1] )).
$$
We can calculate that 
$$
\cinfty(\R^4) \otimes H^\ast_{\dbar}(\mbb{P}^1, \Oo(-4) \otimes \Sym^\ast ( \Oo(1)[-1] \oplus \Oo(1)[-1] )) = \Omega^2_-[-1] \oplus \Omega^3[-2] \oplus \Omega^4[-3].
$$
In this way we find that $\Omega^{0,\ast}(\mbb{PT}, \Oo \oplus \Oo(-4))$ contributes precisely the field content of self-dual Yang-Mills on $\R^4$.  

One can go a bit further, and calculate the solutions to the linearized equations of motion on $\R^4$, using basic results about the Penrose transform.  For example, in the $N=1$ case, we see that the cohomology of the fermionic part of $\mscr{L}^{N=k}_{\R^4}$ on an open subset $U$ of $\R^4$ is $H^\ast(\pi^{-1}U, \Oo(-1)) \oplus H^\ast(\pi^{-1}(U), \Oo(-3))$.  Penrose tells us that $H^\ast(\pi^{-1}U, \Oo(-1))$ is isomorphic to the cohomology of the complex
$$
\mc{S}_+(U) \xto{\dirac} \mc{S}_-(U)
$$
where $\mc{S}_+(U)$ is in degree $1$ and $\mc{S}_-(U)$ is in degree $2$.  Similarly, $\Oo(-3)$ contributes 
$$
\mc{S}_-(U) \xto{\dirac} \mc{S}_+(U).
$$
From this, we conclude that the fermionic part of $H^1(\L_{\R^4}^{\mscr{N}=1}(U))$ consists of (adjoint-valued) spinors $\psi_{+}, \psi_-$ on $U$ satisfying $\dirac \psi_- = \dirac \psi_+ = 0$.   This is what we find for the linearized equations of motion of the fermionic fields of $\mscr{N}=1$ gauge theory, as expected.  

\subsection{}
So far, we have constructed the $\mscr{N}=1, 2$ gauge theories on twistor space.  Next, we will construct the $\mscr{N}=4$ gauge theory.  The construction of this theory does not quite follow the patter set by the $\mscr{N}=1,2$ theories.  

We have seen that $\PT^{\mscr{N}=4}$ has an action of $T^{\mscr{N}=4}$.  Let $\mscr{L}_{\mscr{N}=4}$ be the elliptic Lie algebra describing bundles on $\PT^{\mscr{N}=4}$, near a particular $G$-bundle $P$ pulled back from $\PT$.  Thus, if $\g_P$ is the adjoint bundle of Lie algebras on $\PT$, we have
$$
\mscr{L}_{\mscr{N}=4} = \Omega^{0,\ast}( \PT, \Oo_{\mscr{N}=4} \otimes_{\Oo_{\PT}} \g_P  ).
$$
With the $\mscr{N}=2$ and $\mscr{N}=1$ theories, we took the cotangent theory to the moduli of $G$-bundles on the appropriate super-twistor space.  With the $\mscr{N}=4$ theory, we don't need to do this.
\begin{lemma}
$\mscr{L}_{\mscr{N}=4}$ theory has an invariant pairing of cohomological degree $-3$ (and fermion degree $0$), so that the corresponding elliptic moduli problem is a classical field theory. 
\end{lemma}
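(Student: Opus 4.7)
The strategy is to exhibit $\PT^{\mscr{N}=4}$ as a Calabi-Yau supermanifold of bosonic complex dimension $3$, and then construct the pairing by integration against its holomorphic Berezinian, in exact analogy with the $-3$-shifted pairing on $\Omega^{0,\ast}(X,\g_P)$ for an ordinary Calabi-Yau threefold $X$ (holomorphic Chern-Simons). This is what forces the number of supersymmetries to be $\mscr{N}=4$: it is the unique value for which the ranks of the fermionic and bosonic tangent bundles conspire to make the Berezinian trivial.

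First I would compute the Berezinian. Since $\PT$ is the total space of $\Oo(1)\otimes\mc S_- \to \P(\mc S_+)$, the canonical bundle is
$$K_\PT \cong \pi^\ast\!\bigl(K_{\P(\mc S_+)} \otimes \det(\Oo(1)\otimes \mc S_-)^\vee\bigr) \cong \pi^\ast \Oo(-4),$$
which I abbreviate as $\Oo_\PT(-4)$. The fermionic tangent bundle of $\PT^{\mscr{N}=4}$ is $\pi^\ast\Oo(1)\otimes \C^4$, whose determinant is $\Oo_\PT(4)\otimes \det \C^4$, so
$$\op{Ber}(\PT^{\mscr{N}=4}) \;=\; K_\PT \otimes \det(\Oo(1)\otimes \C^4) \;\cong\; \Oo_\PT(-4) \otimes \Oo_\PT(4) \otimes \det \C^4 \;\cong\; \Oo_\PT$$
after choosing a trivialization $\omega \in \det \C^4$ (equivalently, breaking the $GL(4,\C)$ R-symmetry down to $SL(4,\C)$). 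Integration against this Berezinian section yields a linear map $\int : \Omega^{0,3}\bigl(\PT, \wedge^4(\Oo(-1)\otimes \C^4)\bigr) \to \C$ that vanishes on every other summand. The desired pairing is then
$$\langle \alpha,\beta\rangle \;=\; \int \op{tr}_\g(\alpha \wedge \beta),$$
where $\op{tr}_\g$ is an invariant form on $\g$ and $\wedge$ combines the Dolbeault wedge product with the exterior product on $\Oo_{\mscr{N}=4} = \wedge^\ast(\Oo(-1)\otimes \C^4)$. A summand of bidegree $(i,k)$ (cohomological times fermionic) pairs nontrivially only with the summand of bidegree $(3-i,4-k)$; hence the pairing has cohomological degree $-3$ and fermion degree $4 \bmod 2 = 0$, as required.

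Finally, I would verify the three defining properties. Non-degeneracy decomposes as a composition of three nondegenerate pieces: Serre duality on $\PT$ with dualizing sheaf $K_\PT = \Oo_\PT(-4)$, the top-exterior pairing $\wedge^k(\Oo(-1)\otimes\C^4) \otimes \wedge^{4-k}(\Oo(-1)\otimes\C^4) \to \Oo_\PT(-4) \otimes \det\C^4$, and the invariant form on $\g$. Graded symmetry follows from the symmetry of each of these three pieces once one tracks the Koszul signs arising from the combined cohomological and fermionic gradings. Invariance under the $L_\infty$ structure reduces to two standard facts: the invariance $\op{tr}_\g([x,y],z) = \op{tr}_\g(x,[y,z])$ gives cyclicity of the bracket part of the pairing, and the Berezinian section $\omega$ is holomorphic, hence $\dbar$-closed, so Stokes' theorem yields $\int \dbar(-) = 0$. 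I do not anticipate any serious obstacle; the only delicate point is matching sign conventions, i.e.\ correctly applying the Koszul rule to the bi-graded structure on $\Oo_{\mscr{N}=4}$.
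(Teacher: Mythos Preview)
Your proposal is correct and follows essentially the same approach as the paper: both identify $\PT^{\mscr{N}=4}$ as a super Calabi-Yau (you compute the Berezinian explicitly, the paper just writes down the map $\Oo_{\mscr{N}=4}\to\wedge^4(\Oo(-1)\otimes\C^4)=\Oo_{\PT}(-4)=K_{\PT}$) and construct the pairing as the holomorphic Chern-Simons pairing. Your treatment is in fact more thorough than the paper's, which omits the verification of non-degeneracy, symmetry, and invariance that you sketch.
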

\begin{proof}
Indeed,
$$
\Oo_{\mscr{N}=4} = \Sym^\ast \left(  \Pi \Oo_{\PT}(-1) \otimes \C^4  \right) .
$$
Thus, there's a map
$$
\Oo_{\mscr{N}=4} \to \wedge^4( \Oo(-1)_{\PT} \otimes \C^4)   = \Oo_{\PT}(-4) = K_{\PT}
$$
of super vector bundles on $\Oo_{\PT}$.

The invariant pairing on the Lie algebra $\g_P$ thus gives us a map of super vector bundles on $\PT$
$$
\left(\Oo_{\mscr{N}=4} \otimes_{\Oo_{\PT}} \g_P\right)^{\otimes 2} \to K_{\PT}.
$$
Composing with the isomorphism
$$
\Omega^{0,3} ( \PT, K_{\PT} ) \iso \op{Dens}( \PT)
$$
gives the desired invariant pairing. 
\end{proof}
Of course, this lemma is simply observing that $\PT^{\mscr{N}=4}$ is a super Calabi-Yau manifold.  The anti-self-dual $\mscr{N}=4$ theory is the holomorphic Chern-Simons theory on this super Calabi-Yau.  

It is clear that the action of the super-translation Lie algebra $T^{\mscr{N}=4}$ on the $\mscr{N}=4$ theory is compatible with the action of the complexified Euclidean Lie algebra, so this is indeed a supersymmetric field theory. 

The $R$-symmetry group of the $\mscr{N}=4$ theory is \emph{not} $GL(4,\C)$, because we need to choose a trivialization of $\op{det} \C^4$ in order to write down the symplectic pairing.  Instead, the $R$-symmetry group is $SL(4,\C)$. 

\begin{remark}
Our conventions are slightly different to usual conventions in the physics literature.  Our theories have complex space of fields, and we are not concerned about reality conditions.  Thus, our $R$-symmetry group is $SL(4,\C)$.  In the physics literature, reality conditions are considered to be more important, so the $R$-symmetry group for the $\mscr{N}=4$ theory is usually taken to be $SU(4)$.  
\end{remark}

\begin{remark}
One can ask why we have not considered a theory with $\mscr{N}=3$ supersymmetry.  We could follow the pattern set by the $\mscr{N}=1$ and $\mscr{N}=2$ theories, and define the $\mscr{N}=3$ theory to be the cotangent theory to the moduli of holomorphic bundles on the graded manifold 
$$\Oo(1)[1]^{\oplus 2} \oplus \Oo(1)[-1]  \to \PT.$$
This cotangent theory will recover the $\mscr{N}=4$ theory.  Thus, the $\mscr{N}=3$ theory already has $\mscr{N}=4$ supersymmetry. 

%One could also try to construct theories with more than $\mscr{N}=4$ supersymmetries, by following this pattern.  For example, we could try to define a theory with $\mscr{N}=5$ supersymetries as the cotangent theory to the moduli of holomorphic bundles on $\PT^{\mscr{N}=5}$.   From a mathematical point of view, it is hard to see what the problem with this theory is. However, I believe it violates some of the stringent positive-energy requirements that are important in physics. 
\end{remark}

\section{Twisting supersymmetric field theories}
Many of the quantum field theories of interest in mathematics arise as twistings of supersymmetric field theories.  In this section I will describe the concept of twisting of a supersymmetric field theory, and analyze the twists of the supersymmetric gauge theories introduced in the previous section. 

Recall that a field theory on $\R^4$ (or on some other space naturally associated to $\R^4$, such as the twistor space $\PT$) has $\mscr{N}=k$ supersymmetry if it is equipped with an action of the super Lie algebra
$$
T^{\mscr{N}=k} \iso T^{W} = \Pi \left( \mc S_- \otimes W^\vee \oplus  \mc S_+ \otimes W \right)  \oplus V_\C,
$$
where $W$ is a complex vector space of dimension $k$ and where $V_\C = \C^4$ denotes the complexification of the abelian Lie algebra of translations on $\R^4$.

We also assume that our supersymmetric field theory is equipped with an action of an $R$-symmetry group $G_R \subset GL(W)$, in a way compatible with the natural action of $G_R$ on $T^{W}$.  

The basic idea of twisting is as follows.  Suppose we have an odd element $Q$ of $T^{W}$ which satisfies $[Q,Q] = 0$.    A physicist would say that the twisted theory is obtained by treating $Q$ as a BRST operator (that is, as a differential).    In our set-up, field theories are already differential-graded objects.  Therefore, we should imagine constructing the twisted theory by adding $Q$ to the differential on the elliptic moduli problem defining our classical field theory.  This naive idea leads to problems, because our differential should be of fermion degree $0$ and cohomological degree $1$, whereas $Q$ is of fermion degree $1$ and cohomological degree $0$.

In order to do this construction properly, we need some additional data.
\begin{definition}
\emph{Twisting data} for a supersymmetric field theory consists of a group homomorphism $\rho : \C^\times \to G_R$, an odd element $Q \in T^{W}$, such 
\begin{enumerate}
\item $[Q,Q] = 0$.
\item $\rho(t) (Q) = t Q$ for $t \in \C^\times$, and $\rho(t)$ indicates the action of $\C^\times$ on $T^{W}$ arising from the natural action of $G_R \subset GL(W)$ on $T^{W}$.
\end{enumerate}
\end{definition}

Suppose that $\mc{M}$ is a classical field theory on $\R^4$ (or on $\PT$) with $\mscr{N}=k$ supersymmetry.  Thus, $\mc{M}$ is a super elliptic moduli problem on $\R^4$ (or on $\PT$), with an invariant pairing of degree $-1$, and additionally equipped with a compatible action of $\op{Spin}(4)$ and of the super-Lie algebra $T^{W}$.  Let us suppose also that $\mc{M}$ is acted on by the $R$-symmetry group $G_R$, in a way compatible with the $G_R$ action on $T^{W}$. 

Then, a choice of twisting data $(\rho,Q)$ as above gives rise (via $Q$) to an action on $\mc{M}$ of the Abelian super-Lie algebra $\Pi \C$, and (via $\rho$)  to an action of the group $\C^\times$.  These two actions are compatible, and can be viewed as an action of the super-Lie group $\C^\times \ltimes \Pi \C$.  (Since $\Pi \C$ is a nilpotent Lie algebra, we can think of it as a super-Lie group).

\subsection{}
The twisting construction can be defined for any field theory  $\mc M$ on a manifold $X$ with an action of the super-group $\C^\times \ltimes \Pi \C$, covering the trivial action on $X$.   Morally, the procedure to construct a twisted theory is as follows. 
\begin{enumerate}
\item First take the homotopy fixed points of $\mc M$ under the action of $\Pi \C$. This gives a field theory over the Chevalley-Eilenberg Lie algebra cochain algebra $C^\ast(\Pi \C) = \C[[t]]$,  where $t$ has cohomological degree $1$ and fermion degree $1$.
\item Invert the parameter $t$ in our base ring, to give a field theory over $\C((t))$.  
\item Now, take $\C^\times$ invariants.  
\end{enumerate} 
Let us now explain in detail how to do this at the formal level, where $\mc M$ is a formal elliptic moduli problem, corresponding to an elliptic super $L_\infty$ algebra $\mscr{L}$,  equipped with an invariant pairing of cohomological degree $-3$.    As above, suppose that $\mscr{L}$ is equipped with an action of the super-group $\C^\times \ltimes \Pi \C$, covering the trivial action on $X$.  Thus, the sheaf $B \mscr{L}$ on $X$ of formal moduli problem is equipped with an action of $\C^\times \ltimes \Pi \C$ in a way preserving the base point. 
 
Let us assume that the action of $\C^\times$ on $\mscr{L}$ is linear.  Thus, this $\C^\times$ action will give $\mscr{L}$ an additional grading. 

Let us think of the action of $\Pi \C$ on $\mscr{L}$ as a Lie algebra action, and let us denote a basis of $\Pi \C$ by $Q$   Recall that, for us, an action of a Lie algebra $\g$ on a field theory is described by a family of field theories over $C^\ast(\g)$.  Thus, the action of $\Pi \C$ on $\mscr{L}$ is described by a family of elliptic $L_\infty$ algebras $\mscr{L}^{\Pi \C}$ over the base ring $C^\ast  ( \Pi \C ) = \C[[t]]$ where $t$ has a cohomological degree $1$, and fermion degree $1$.  This $L_\infty$ algebra (viewed as an $L_\infty$ algebra over $\C$) is the homotopy fixed points for the $\Pi \C$ action on $\mscr{L}$. 

Concretely, if the action of $\Pi \C$ on the $L_\infty$ algebra $\mscr{L}$ happens to be linear (as is the case for the field theories we are interested in), then 
$$
\mscr{L}^{\Pi \C} = \mscr{L}[[t]]
$$
with differential $\d_{\mscr{L}} + t Q$.    $\mscr{L}^{\Pi C}$ is the homotopy fixed points of $\mscr{L}$ under the action of $\Pi \C$.

Next, let us invert the parameter $t$, to giving us the elliptic $L_\infty$ algebra 
$$\mscr{L}^{\Pi \C} [t^{-1} ] = \mscr{L}^{\Pi \C} \otimes_{\C[[t]]} \C((t)).$$
Finally, we take the $\C^\times$ fixed point of this object. That is, the twisted theory is 
$$
\mscr{L}^{\op{Twisted}} = \left( \mscr{L}^{\Pi \C} [t^{-1} ]  \right) ^{\C^\times}.
$$
The invariant pairing on $\mscr{L}^{\op{Twisted}}$ is defined as follows.  Note that $\mscr{L}^{\Pi \C}[t^{-1}]$ has an invariant pairing valued in $\C((t))$.  This restricts to a pairing on $\mscr{L}^{\op{Twisted}}$ with values in the $\C^\times$-invariants of $\C((t))$, which is $\C$. 

\subsection{}
Let us make this construction more explicit.  Let us assume for a moment that the action of $Q \in \Pi \C$ on $\mscr{L}$ is linear (as will be the case in the examples of interest).  Let $\mscr{L}^k \subset \mscr{L}$ denote the subspace on which $s \in \C^\times$ acts by $s^k$.   For simplicity (so we don't have to discuss completions) let us assume that $\mscr{L}^k = 0$ for all but finitely many $k$.    Note that the operator $Q$ maps $\mscr{L}^k$ to $\mscr{L}^{k+1}$.  Then, we can identify
$$
\left( \mscr{L}^{\Pi \C} [t^{-1}] \right)^{\C^\times} = \oplus_{k \in \Z} \left( \Pi^{k}  \mscr{L}^k [-k] \right).
$$
Thus, $\mscr{L}^k$ is shifted up by cohomological degree $k$, and also has a shift of fermion degree by $k \op{mod} 2$.  This shift comes from the fact that $t^k$ has fermion degree $k \op{mod} 2$ and cohomological degree $k$.

The differential on this complex is arises from the ordinary differential $\d_{\mscr{L}}$ on $\mscr{L}$, together with the operator $Q$, which maps each $\mscr{L}^k$ to $\mscr{L}^{k+1}$.  Because of the shift of fermion and cohomological degrees, the operator $Q$ is now of fermion degree $0$ and cohomological degree $1$, as is required for a differential. 

\subsection{}
Let $\mscr{L}$ be a field theory on $X$ as above, with an action of $\C^\times \ltimes \Pi \C$.  Then there is a filtration on $\mscr{L}^{\op{Twisted}}$ by setting
$$
F^i \mscr{L}^{\op{Twisted}} = \oplus_{k \ge i} \mscr{L}^k t^k \subset \mscr{L}^{\op{Twisted}}.
$$
The associated graded coincides with $\mscr{L}$, except for a change of grading, where the summand $\mscr{L}^k$ is shifted by cohomological degree $k$ and by fermion degree $k$.  In other words, the associated graded coincides with $\mscr{L}$ twisted by the given action of $\C^\times$ and the trivial action of $\Pi \C$. 

In \cite{CosGwi11}, we analyze classical and quantum field theories in terms of the associated factorization algebra.  The factorization algebra for the classical field theory $\mscr{L}$ assigns, to an open subset $U \subset X$, the Chevalley-Eilenberg cochains $C^\ast(\mscr{L}(U))$.  Let us denote this factorization algebra by $\op{Obs}^{cl}_{\mscr{L}}$: it is the factorization algebra of \emph{classical observables} of the field theory. 

The group $\C^\times \ltimes \C$ acts on the factorization algebra $\Obs^{cl}_{\mscr{L}}$, and one can construct the factorization algebra for the twisted theory directly from this action.  Indeed, it is not difficult to verify that $\Obs^{cl}_{\mscr{L}^{\op{Twisted}}}$ can be computed by
$$
\Obs^{cl}_{\mscr{L}^{\op{Twisted}}} (U) = \left( \Obs^{cl}_{\mscr{L}}(U)^{\Pi \C} \otimes_{\C[[t]]} \C((t))  \right)^{\C^\times}.
$$
In other words, the observables for the twisted theory are obtained from the observables of the untwisted theory by first, taking the homotopy fixed points with respect to $\Pi \C$; then inverting the parameter $t$; and finally taking $\C^\times$-invariants.   (In this expression, a little care is needed with completions of topological vector spaces in order to make the equality exact). 

It follows from this expression that there is a spectral sequence computing the cohomology of the observables of the twisted theory from those of the untwisted theory, as follows.   Let $H^{i,j,k}  (\Obs^{cl}_{\mscr{L}}(U)))$ denote the cohomology in cohomological degree $i$, fermion degree $j$ and weight $k$ under the $\C^\times$ action.  Then we have a spectral sequence
$$
H^{i,j,k} (\Obs^{cl}_{\mscr{L}}(U)) \Rightarrow H^{i+k, j+k} ( \Obs^{cl}_{\mscr{L}^{\op{Twisted}}} (U)).
$$

If one can construct a quantization of the classical theory described by $\mscr{L}$, and if this quantization is compatible with the action of $\C^\times \ltimes \Pi \C$, then one automatically has a quantization of the twisted theory.  Further, the relation between observables of the twisted and untwisted theories described above holds at the quantum level.   It follows that one has a similar spectral sequence relating quantum observables of the twisted and untwisted theories.  

\section{Twisted supersymmetric gauge theories}
Next, we will give a detailed description of the theories obtained by twisting our anti-self-dual super-symmetric gauge theories on twistor space.    We will see that the twisting procedure yields natural ``holomorphic gauge theories'' on $\C^2$. 

We will start with the case of $\mscr{N}=1$ supersymmetry.  Let us choose an element $Q \in \mc{S}_+$.  Recall that the choice of such an element yields a complex structure on the linear space $\R^4$; indeed, the stabilizer of $Q$ in $\op{Spin}(4) = SU(2) \times SU(2)$ is $SU(2)$, so that $Q$ provides a reduction of structure group to $SU(2)$. 

Since the $R$-symmetry group of the $\mscr{N}=1$ theory is $\C^\times$, there is a unique map $\rho : \C^\times \to G_R^{\mscr{N}=1}$ under which $Q$ has weight $1$. 

Recall that the $\mscr{N}=1$ anti-self-dual theory is the cotangent theory to the elliptic moduli problem of holomorphic bundles on $\PT^{\mscr{N}=1} = \Pi \Oo(1) \to \PT$.  As before, let 
$$
\Oo_{\mscr{N}=1} =  \Pi \Oo_{\PT}(1) \oplus \Oo_{\PT}
$$
be the structure sheaf of $\PT^{\mscr{N}=1}$, viewed as a sheaf of super commutative algebras on $\PT$.  

Recall that 
$$\PT = \Oo(1)^{\oplus 2} \to \P(\mc S_+).$$
Thus, the element $Q \in \mc S_+$ yields a section on $\P(\mc S_+)$ of $\Oo(1)$, and so a section of $\Oo_{\PT}(1)$.  Thus, it yields a map
$$
Q : \Oo_{\PT}(-1) \to \Oo_{\PT}.
$$

Recall that the twisting procedure described above involves a shift in grading.  In this example, we shift $\Oo_{\PT}(-1)$ from being in fermion degree $1$ and cohomological degree $0$ to being in fermion degree $0$ and cohomological degree $-1$. 

Thus, the twisting procedure yields the sheaf $\Oo_{\mscr{N}=1}^Q$ of differential graded algebras
$$
\Oo_{\mscr{N}=1}^Q = \Oo_{\PT}(-1)[1] \xto{Q} \Oo_{\PT}.  
$$

The twisted $\mscr{N}=1$ theory is the cotangent theory to the elliptic moduli problem for holomorphic bundles on the differential graded complex manifold $\PT$ with structure sheaf $\Oo_{\mscr{N}=1}^Q$.  Explicitly, near the trivial bundle principal $G$-bundle on $\PT$, the twisted theory is the cotangent theory associated to the elliptic Lie algebra
$$
\Omega^{0,\ast} ( \PT,  \Oo_{\mscr{N}=1}^Q \otimes \g ).
$$

Note that this theory is concentrated entirely in fermion degree $0$, so we can consider it to be an ordinary (non-super) elliptic moduli problem.  This will be a feature of all the twisted theories we consider. 

We showed above that, in general, the twisted theory lives in a $\C^\times$-equivariant family of theories over $\mbb{A}^1$ whose value at the central fibre is the untwisted theory (with a different grading).    For this example, the family of theories is defined by the family of differential graded algebras
$$
\Oo_{\PT}(-1)[1] \xto{t Q} \Oo_{\PT}.  
$$
This has a natural $\C^\times$ action (of weight $0$ on $\Oo_{\PT}$ and weight $-1$ on $\Oo_{\PT}(-1)[1]$) making it into a $\C^\times$-equivariant family of theories over $\mbb{A}^1$. 

\subsection{}
Observe that there is a quasi-isomorphism of sheaves of algebras on $\PT$
$$
\Oo_{\mscr{N}=1}^Q \simeq \Oo_{Z(Q) } 
$$
where 
$$
Z(Q) \subset \PT
$$
is the copy of $\C^2$ realized as the zero locus of $Q \in \Gamma( \PT, \Oo(1))$.  The twistor projection $\PT \to \R^4$ yields a diffeomorphism
$$
Z(Q) \to \R^4.
$$
The complex structure on $\R^4$ induced by this diffeomorphism is, of course, the complex structure associated to $Q \in \mc S_+$. 

It follows that there is a homotopy equivalence of sheaves of dg Lie algebras 
$$
\pi_\ast \Omega^{0,\ast}( \Oo_{\mscr{N}=1}^Q  \otimes \g ) \simeq \Omega^{0,\ast} ( \Oo_{Z(Q)} \otimes \g),
$$
where $\pi : \PT \to \R^4$ is the twistor fibration. 

Thus, we have shown the following.
\begin{lemma}
The twisted self-dual $\mscr{N}=1$ theory is equivalent to the cotangent theory for the moduli space of holomorphic bundles on $\C^2 = \R^4$, holomorphic with respect to the complex structure determined by the choice of spinor $Q \in \mc S_+$.  
\end{lemma}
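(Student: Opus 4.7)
My plan is to justify in turn each of the three equivalences sketched just above the lemma statement, and then invoke functoriality of the cotangent construction to deduce the result.

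First I would verify the quasi-isomorphism of sheaves of commutative dg algebras on $\PT$:
$$
\Oo_{\mscr{N}=1}^Q \;=\; \bigl( \Oo_{\PT}(-1)[1] \xto{Q} \Oo_{\PT} \bigr) \;\simeq\; \Oo_{Z(Q)}.
$$
Since $Q$ is a non-vanishing section of $\Oo_{\PT}(1)$ away from $Z(Q)$ and cuts out $Z(Q)$ transversally as a smooth divisor, the two-term complex is the Koszul resolution of the structure sheaf of this divisor. Tensoring with the adjoint bundle $\g$ and applying the Dolbeault functor $\Omega^{0,\ast}(\PT,-)$ preserves this quasi-isomorphism (the Dolbeault resolution is flat and the Koszul complex is bounded), giving
$$
\Omega^{0,\ast}(\PT, \Oo_{\mscr{N}=1}^Q \otimes \g) \;\simeq\; \Omega^{0,\ast}(Z(Q), \g).
$$
Moreover, the Koszul quasi-isomorphism is a map of cdgas, so it preserves the dg Lie structure coming from the bracket on $\g$.

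Next I would identify $Z(Q)$, with its induced complex structure, with $\C^2$ carrying the complex structure determined by the spinor $Q \in \mc S_+$. Recall $\PT$ is the total space of $\Oo(1) \otimes \mc S_-  \to \P(\mc S_+)$, and the element $Q \in \mc S_+$ determines a section of $\Oo_{\P(\mc S_+)}(1)$ whose zero locus is a single point $[Q^\perp] \in \P(\mc S_+)$. Pulling back to $\PT$, the locus $Z(Q)$ is the fibre of $\PT$ over this point, which is the rank-$2$ complex vector space $\mc S_-$, and so $Z(Q) \cong \C^2$ as a complex submanifold. To see that $\pi : \PT \to \R^4$ restricts to a diffeomorphism $Z(Q) \to \R^4$, I observe that each twistor line $\P^1_x$ meets $Z(Q)$ in exactly one point: the value of the section defining $Z(Q)$ on $\P^1_x$ is a non-vanishing holomorphic section of $\Oo(1)|_{\P^1_x}$ (by the same positivity argument used in the disjointness lemma for twistor lines), hence vanishes at a unique point. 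A short direct check from the definition of the twistor fibration identifies the resulting almost complex structure on $\R^4$ with the one determined by $Q$.

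Combining these two steps yields an equivalence of sheaves of dg Lie algebras on $\R^4$:
$$
\pi_\ast \Omega^{0,\ast}(\PT, \Oo_{\mscr{N}=1}^Q \otimes \g) \;\simeq\; \Omega^{0,\ast}(\C^2_Q, \g).
$$
The right-hand side is precisely the elliptic Lie algebra controlling deformations of holomorphic principal $G$-bundles on $\C^2_Q$. Since the twisted anti-self-dual $\mscr{N}=1$ theory was defined as the cotangent theory to the elliptic moduli problem on the left, and since the $L_\infty$-equivalence above identifies the invariant pairings (both sides pair the $\g$-summand against its Serre dual on a $2$-complex-dimensional space with its canonical bundle), compatibility of the $T^\ast[-1]$ construction with $L_\infty$-equivalences gives the lemma.

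The main obstacle I anticipate is the last bookkeeping step: ensuring that all equivalences go through as maps of elliptic $L_\infty$-algebras with invariant pairings, not merely of underlying complexes. The delicate point is that pushing forward $\Omega^{0,\ast}(\PT,-)$ along $\pi$ produces objects that are not literally elliptic on $\R^4$, only homotopy equivalent to elliptic ones; however, the Hodge-theoretic argument used in the Penrose-Ward discussion (earlier in the paper) for the purely bosonic case applies verbatim to produce a finite-rank elliptic model whose underlying complex is $\Omega^{0,\ast}(\C^2_Q,\g)$ and on which the pairing restricts to the Calabi-Yau pairing, making the final identification rigorous.
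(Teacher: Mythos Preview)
Your proposal is correct and follows essentially the same route as the paper: the Koszul quasi-isomorphism $\Oo_{\mscr{N}=1}^Q \simeq \Oo_{Z(Q)}$, the identification of $Z(Q)$ with $\C^2$ carrying the complex structure determined by $Q$ via the twistor fibration, and the resulting equivalence of elliptic dg Lie algebras. One small wording slip: you write that the restriction of $Q$ to $\P^1_x$ is a ``non-vanishing'' section which then ``vanishes at a unique point''; you mean a \emph{nonzero} section of $\Oo(1)$ on $\P^1$, which has exactly one zero.
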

Note, however, that in this presentation the relationship between the twisted theory and the original theory has been obscured.    There is no filtration on the dg Lie algebra $\Omega^{0,\ast} ( \Oo_{Z(Q)} \otimes \g)$ corresponding to that on the homotopy equivalent dg Lie algebra $\pi_\ast \Omega^{0,\ast}( \Oo_{\mscr{N}=1}^Q  \otimes \g) $; the filtration can only be written down after passing to a larger (but homotopy equivalent) complex. 
 
\subsection{}
Let us now consider the twists of the $\mscr{N}=2$ and $\mscr{N}=4$ theories.   In each case we need to choose twisting data (for the $\mscr{N}=1$ theory, all choices are essentially equivalent).  

Recall that the $R$-symmetry group of the $\mscr{N}=2$ theory is $GL_2(\C)$.  Part of the twisting data is a group homomorphism $\rho : \C^\times \to GL_2(\C)$.  We will choose the homomorphism given by the matrix
$$
\begin{pmatrix}
t & 0 \\
0 & t
\end{pmatrix}.
$$

Recall that the odd part of $T^{\mscr{N}=2}$ is $\mc S_+ \otimes \C^2 \oplus \mc S_- \otimes \C^2$.   The other part of our twisting data is an odd element $Q$ of $T^{\mscr{N}=2}$ which satisfies $[Q,Q] = 0$ and which is of weight $1$ under the $\C^\times$ action.   The $\C^\times$ action gives $\mc S_+ \otimes \C^2$ weight $1$ and $\mc S_- \otimes \C^2$ weight $-1$.  Thus, the space of possible $Q$'s is $\mc S_+ \otimes \C^2$.  Let $\alpha \in \mc S_+$ be a spinor (corresponding to a complex structure on $\R^4$).  We will take our $Q$ to be a decomposable element
$$
Q = \alpha \otimes \begin{pmatrix} 1 \\ 0\end{pmatrix}.
$$

\begin{remark}
Note that the twisting data we choose is not generic.  Indeed, once we have chosen our homomorphism $\rho: \C^\times \to GL_2(\C)$, we see that possible $Q$'s is $\mc S_+ \otimes \C^2$.  The group $GL(2) \times \op{Spin}(4,\C)$ acts on this space; the action factors through $GL_2(\C) \times SL_2(\C)$.   Two twists which are related by an element of this symmetry group are equivalent. We are choosing our $Q$ to be a decomposable tensor, and thus in a lowest-dimensional non-zero orbit.   

We call twists of this form \emph{minimal twists}.  We will realize twists by more generic elements as being obtained by further twisting the minimally twisted theory.   
\end{remark}

We find that the twisted $\mscr{N}=2$ theory is the cotangent theory to the moduli of holomorphic $G$-bundles on the differential graded complex manifold with structure sheaf
$$
\Oo_{\PT^{\mscr{N}=2}}^{Q} = \Sym^\ast\left(  \Oo_{\PT}(-1) [1] \oplus \Oo_{\PT}(-1) [1]  \right)
$$
with differential induced from the map $Q : \Oo_{\PT}(-1)[1] \to \Oo_{\PT}$.

An argument similar to that we applied for the $\mscr{N}=1$ theory now shows the following. 
\begin{lemma}
The twisted $\mscr{N}=2$ theory is the cotangent theory for the elliptic moduli problem of principal $G$-bundles on the graded complex manifold $\underline{\C}[-1] \to \C^2$, where $\underline{\C}[-1]$ refers to the trivial line bundle put in degree $1$.
\end{lemma}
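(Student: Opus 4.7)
The plan mirrors the argument already given for the $\mscr{N}=1$ case. First, I compute the twisted structure sheaf $\Oo_{\mscr{N}=2}^{Q}$ explicitly and recognise it, up to quasi-isomorphism, as a Koszul resolution of the structure sheaf of the graded complex manifold $\underline{\C}[-1] \to \C^{2}$, pulled back to $\PT$. Second, I transfer this equivalence of sheaves on $\PT$ into a quasi-isomorphism of sheaves of dg Lie algebras on $\R^{4}$, using the same pushforward along the twistor fibration $\pi : \PT \to \R^{4}$ as in the Penrose--Ward correspondence recalled earlier. Finally, I check that the cotangent structure is preserved.

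For the first step: the twisting recipe shifts both copies of $\Oo_{\PT}(-1)$ sitting inside $\Oo_{\PT^{\mscr{N}=2}}$ from fermion degree $1$, cohomological degree $0$ to fermion degree $0$, cohomological degree $-1$, and the differential coming from $Q = \alpha \otimes e_{1}$ acts only on the first copy. Writing $L_{i} = \Oo_{\PT}(-1)[1]\,e_{i}$ for $i=1,2$, this gives
$$
\Oo_{\mscr{N}=2}^{Q} \;=\; \Sym^{\ast}_{\Oo_{\PT}}(L_{1}\oplus L_{2}) \;=\; \bigl( L_{1} \xto{Q} \Oo_{\PT}\bigr) \otimes_{\Oo_{\PT}} \Sym^{\ast}_{\Oo_{\PT}}(L_{2}).
$$
The first factor is the Koszul complex $\Oo_{\mscr{N}=1}^{Q}$ of the preceding lemma, which is a resolution of $\iota_{\ast}\Oo_{Z(Q)}$ for $\iota : Z(Q) \hookrightarrow \PT$ the inclusion of the twistor fibre over the zero of $\alpha \in H^{0}(\P(\mc S_{+}),\Oo(1))$. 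The second factor carries no differential.

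For the second step: $Z(Q)$ is a single fibre of the projection $\PT \to \P(\mc S_{+})$, so $\Oo_{\PT}(-1)\mid_{Z(Q)}$ is canonically trivial, and the twistor fibration identifies $Z(Q)$ with $\R^{4}$ equipped with the complex structure determined by $\alpha \in \mc S_{+}$. Combining these observations gives
$$
\Oo_{\mscr{N}=2}^{Q}\mid_{Z(Q)} \;\simeq\; \Oo_{Z(Q)} \otimes \Sym^{\ast}(\underline{\C}[1]),
$$
which is exactly the structure sheaf, as an elliptic ringed space, of $\underline{\C}[-1] \to \C^{2}$. The same pushforward-to-$\R^{4}$ argument as in the $\mscr{N}=1$ lemma (equivalently, the super-Penrose--Ward equivalence of \cite{BoeMasSki07, Mov08}) then upgrades this to a homotopy equivalence of sheaves of dg Lie algebras on $\R^{4}$
$$
\pi_{\ast}\,\Omega^{0,\ast}\!\bigl(\PT,\, \Oo_{\mscr{N}=2}^{Q} \otimes \g_{P}\bigr) \;\simeq\; \Omega^{0,\ast}\!\bigl(\C^{2},\, \Sym^{\ast}(\underline{\C}[1]) \otimes \g\bigr),
$$
whose right-hand side is the elliptic $L_{\infty}$ algebra controlling principal $G$-bundles on $\underline{\C}[-1] \to \C^{2}$.

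The main obstacle is this second step: upgrading the fibrewise identification to a quasi-isomorphism of sheaves of elliptic dg Lie algebras on $\R^{4}$ requires the Hodge-theoretic analysis along twistor fibres that underlies the Penrose--Ward correspondence, and the extra exterior factor $\Sym^{\ast}(L_{2})$ must be tracked through this argument to recover precisely the odd direction of $\underline{\C}[-1]$. Finally, the cotangent structure is automatically preserved: the $\C^{\times}\ltimes \Pi\C$-action used to twist is compatible with the cotangent splitting of the $\mscr{N}=2$ theory, since it comes from the $R$-symmetry action on the super-translation Lie algebra, and the invariant pairing on $\g$ combined with the canonical trivialization of the Berezinian of $\underline{\C}[-1]$ over $\C^{2}$ reproduces the cotangent pairing on the target side.
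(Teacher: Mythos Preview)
Your proposal is correct and is precisely the argument the paper has in mind: the paper gives no proof for this lemma beyond the sentence ``An argument similar to that we applied for the $\mscr{N}=1$ theory now shows the following,'' and what you have written is exactly that similar argument, carried out in detail. Your factorisation $\Oo_{\mscr{N}=2}^{Q} \simeq \Oo_{\mscr{N}=1}^{Q} \otimes_{\Oo_{\PT}} \Sym^{\ast}(L_{2})$ together with the observation that $\Oo_{\PT}(-1)$ trivialises on the fibre $Z(Q)$ is the right way to see the extra graded direction $\underline{\C}[-1]$ appear.
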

\subsection{}
Next, let us consider the $\mscr{N}=4$ theory.  The $R$-symmetry group in this case is $SL_4(\C)$.  We choose our homomorphism $\rho : \C^\times \to SL_4(\C)$ to be given by the matrix
$$
\begin{pmatrix}
t & 0 &0 &0 \\
0 & t & 0 & 0\\
0 & 0 & t^{-1} & 0 \\
0 & 0 & 0 & t^{-1}
\end{pmatrix}.
$$
The space of possible $Q$'s of weight $1$ under this twist is $\mc S_+ \otimes \C^2 \oplus \mc S_- \otimes \C^2$.  We take our $Q$ to be, as in the $\mscr{N}=2$ theory, a decomposable tensor
$$
Q = \alpha \otimes \begin{pmatrix} 1 \\ 0\end{pmatrix}
$$
for some $\alpha \in \mc S_+$. 

The following lemma is easy to verify.
\begin{lemma}
The twisted $\mscr{N}=4$ theory is the cotangent theory  for the elliptic moduli problem describing holomorphic $G$-bundles on the graded complex manifold $\underline{\C}^2[1] \to \C^2$.  
\end{lemma}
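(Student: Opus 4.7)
The plan is to compute the twisted theory directly, following the pattern of the $\mscr{N}=1$ and $\mscr{N}=2$ lemmas: identify the action of the supercharge $Q$ on $\Oo_{\mbb{PT}^{\mscr{N}=4}}$ as a Koszul-type differential, take cohomology, and then split off the ``antifield'' piece using the Calabi--Yau structure.

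First, I would compute the action of $Q=\alpha\otimes e_{1}\in\mc{S}_{+}\otimes W$, with $W=\C^{4}$, on the structure sheaf $\Oo_{\mbb{PT}^{\mscr{N}=4}}=\Lambda^{*}(\Oo_{\mbb{PT}}(-1)\otimes W^{\vee})$. By the construction of the $T^{W}$-action on $\mbb{PT}^{W}$, the vector field associated to $\alpha\otimes e_{1}\in H^{0}(\mbb{P}(\mc{S}_{+}),\Oo(1))\otimes W$ is the odd translation along $e_{1}$ scaled by $\alpha$, so on the generators $\theta_{1},\dots,\theta_{4}$ of $\Oo_{\mbb{PT}}(-1)\otimes W^{\vee}$ one has $Q\theta_{i}=\delta_{i1}\,\alpha$. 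Under the chosen twisting data, the weight-$(-1)$ generators $\theta_{1},\theta_{2}$ land in cohomological degree $-1$, the weight-$(+1)$ generators $\theta_{3},\theta_{4}$ land in degree $+1$, and $Q$ becomes a differential of cohomological degree $+1$.

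Next, exactly as in the $\mscr{N}=1$ case, the sub-dga $\Oo_{\mbb{PT}}\langle\theta_{1}\rangle$ with differential $\theta_{1}\mapsto\alpha$ is the Koszul resolution of $\Oo_{Z(\alpha)}$, where $Z(\alpha)\subset\mbb{PT}$ is the zero locus of $\alpha$, and the twistor projection restricts to a diffeomorphism $Z(\alpha)\simeq\R^{4}=\C^{2}$ with the complex structure determined by $\alpha\in\mc{S}_{+}$. This yields a quasi-isomorphism of sheaves of dgas $\Oo_{\mbb{PT}^{\mscr{N}=4}}^{Q}\simeq\Oo_{Z(\alpha)}\otimes\Lambda^{*}(\theta_{2},\theta_{3},\theta_{4})$, and hence a quasi-isomorphism of sheaves of elliptic $L_{\infty}$ algebras on $\R^{4}$:
\begin{equation*}
\pi_{*}\Omega^{0,*}(\mbb{PT},\Oo_{\mbb{PT}^{\mscr{N}=4}}^{Q}\otimes\g_{P})\;\simeq\;\Omega^{0,*}\bigl(\C^{2},\Lambda^{*}(\theta_{2},\theta_{3},\theta_{4})\otimes\g_{P}\bigr),
\end{equation*}
with $|\theta_{2}|=-1$ and $|\theta_{3}|=|\theta_{4}|=+1$.

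It then remains to recognize the right-hand side as a cotangent theory. Set $\L_{0}=\Omega^{0,*}(\C^{2},\Lambda^{*}(\theta_{3},\theta_{4})\otimes\g)$; in the paper's convention that $V[1]$ is placed in cohomological degree $-1$ (so that dual coordinates sit in degree $+1$), the sheaf $\Oo_{\C^{2}}\otimes\Lambda^{*}(\theta_{3},\theta_{4})$ is precisely the structure sheaf of $\underline{\C}^{2}[1]\to\C^{2}$, so $\L_{0}$ describes holomorphic $G$-bundles on this graded complex manifold. Splitting $\Lambda^{*}(\theta_{2},\theta_{3},\theta_{4})=\Lambda^{*}(\theta_{3},\theta_{4})\oplus\theta_{2}\cdot\Lambda^{*}(\theta_{3},\theta_{4})$ and using $K_{\C^{2}}\cong\Oo_{\C^{2}}$, Dolbeault--Serre duality on $\C^{2}$, and the determinant trivialization $\Lambda^{2}\langle\theta_{3},\theta_{4}\rangle^{\vee}\cong\C$, a direct degree count identifies $\theta_{2}\cdot\L_{0}$ with $\L_{0}^{!}[-3]$. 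Under $Q$-cohomology the Calabi--Yau pairing on $\Oo_{\mscr{N}=4}$ (extraction of the coefficient of $\theta_{1}\theta_{2}\theta_{3}\theta_{4}$ followed by integration over $\mbb{PT}$) descends to the pairing $\L_{0}\otimes\theta_{2}\L_{0}\to\C[-3]$ given by multiplication in $\Lambda^{*}(\theta_{3},\theta_{4})$ followed by integration of the top Dolbeault form on $\C^{2}$, and this is exactly the defining pairing of $T^{*}[-1]\mc{M}_{\L_{0}}$. The main obstacle is the sign and shift bookkeeping in the last step, complicated further by the fact that the splitting singling out $\theta_{2}$ is not invariant under the full $R$-symmetry group $SL_{4}(\C)$ even though the resulting cotangent structure is canonical.
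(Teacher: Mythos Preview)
Your proposal is correct and follows the approach the paper has in mind. The paper gives no proof of this lemma (it says only ``the following lemma is easy to verify''), but the intended argument is exactly the pattern you execute: reduce $\Oo_{\PT^{\mscr{N}=4}}^{Q}$ to $\Oo_{Z(\alpha)}\otimes\Lambda^{*}(\theta_{2},\theta_{3},\theta_{4})$ via the Koszul step already used for $\mscr{N}=1,2$, and then read off the resulting elliptic Lie algebra on $\C^{2}$. The paper confirms your endpoint in the subsection that follows the lemma, where it writes the twisted theory explicitly as $\Omega^{0,\ast}(\C^{2},\g\otimes A\oplus\g^{\vee}[1]\otimes A)$ with $A=\C[\eps_{1},\eps_{2}]$, $|\eps_{i}|=1$; your $\theta_{3},\theta_{4}$ are the $\eps_{i}$ and your $\theta_{2}$-direction is the $\g^{\vee}[1]\otimes A$ summand.

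The one step that is genuinely new in the $\mscr{N}=4$ case, and which you handle but the paper leaves implicit, is the identification of the result as a \emph{cotangent} theory: unlike $\mscr{N}=1,2$, the $\mscr{N}=4$ theory was not defined as a cotangent theory, so one must check that the twisted theory splits as $\L_{0}\oplus\L_{0}^{!}[-3]$ with the correct module and pairing structure. Your verification is right: the bracket on $\theta_{2}\L_{0}\times\theta_{2}\L_{0}$ vanishes since $\theta_{2}^{2}=0$, and the Calabi--Yau pairing (extract the coefficient of $\theta_{2}\theta_{3}\theta_{4}$, then integrate against $\d z_{1}\d z_{2}$) pairs $\L_{0}$ with $\theta_{2}\L_{0}$ exactly as required. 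Your closing caveat about $SL_{4}(\C)$-noninvariance of the splitting is accurate but harmless: the twisting data already break $SL_{4}(\C)$ to the subgroup commuting with $\rho$ and fixing $Q$, and under that residual symmetry the splitting is natural.
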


\subsection{}
It is worthwhile describing these field theories completely explicitly.  For the $\mscr{N}=1$ theory, near the trivial principal $G$-bundle, the elliptic Lie algebra with invariant pairing is 
$$
\mscr{L}_{\mscr{N}=1}^{Q} = \Omega^{0,\ast} ( \C^2,   \g \oplus \g^\vee[-1]),
$$
where $\g \oplus \g^\vee[-1]$ is made into a Lie algebra by the action of $\g$ on $\g^\vee[-1]$.  The invariant pairing is
$$
\ip{\phi \otimes A, \psi \otimes B } = \int_{\C^2} \phi \psi \d z_1 \d z_2 \ip{A,B}_{\mf{g}}
$$
where $A \in \g$, $B \in \g^\vee$, and $\phi, \psi \in \Omega^{0,\ast}(\C^2)$. 

The $\mscr{N}=2$ Lie algebra is
$$
\mscr{L}_{\mscr{N}=2}^{Q} = \Omega^{0,\ast} ( \C^2,   \g[\eps] \oplus \g^\vee[-2][\eps] ),
$$
where $\eps$ is a parameter of degree $-1$, and the pairing between $\g[\eps]$ and $\g^\vee[\eps]$ is given by combining the pairing between $\g$ and $\g^\vee$ with the trace map on $\C[\eps]$ defined by 
\begin{align*}
\op{Tr} : \C[\eps] &\to  \C\\\
\op{Tr}(\eps) &= 1.
\end{align*}

Finally, let us describe the $\mscr{N}=4$ Lie algebra.  Consider the algebra $A = \C[\eps_1,\eps_2]$, where $\eps_i$ are of degree $1$, equipped with the trace map of degree $-2$ defined by
$$
\op{Tr}( \eps_1 \eps_2 ) = 1.
$$
The $\mscr{N}=4$ Lie algebra can be written as 
$$
\mscr{L}_{\mscr{N}=4}^{Q} = \Omega^{0,\ast} ( \C^2,   \g \otimes A  \oplus \g^\vee [1] \otimes A ).
$$

Note that the $\mscr{N}=1,2$ and $4$ twisted theories all arise from taking a graded-commutative Frobenius algebra $A$, with a trace of degree $-k$, and then considering 
$$
\Omega^{0,\ast}( \C^2, \g \otimes A \oplus \g^\vee[k-1] \otimes A).  
$$

\subsection{}
A supersymmetric field theory, by definition, is one equipped with an action of a certain super Lie algebra.  In this section we will see that a twist of a twist of a supersymmetric gauge theory has some residual symmetries. 

Recall that a theory with $\mscr{N}=k$ supersymmetries has an action of the super Lie algebra 
$$
\left( \g_R \oplus \mf{sl}_2(\C) \oplus \mf{sl}_2(\C)  \right) \ltimes T^{\mscr{N}=k},
$$
where $\g_R$ is the Lie algebra of the appropriate $R$-symmetry group (which is $SL_4(\C)$ in the case $k = 4$, $GL_2(\C)$ in the case $k = 2$, or $\C^\times$ in the case $k = 1$). 

Let $(\rho, Q)$ be twisting data as above. Let  $\g_R^{\rho} \subset \g_R$ be the sub Lie algebra fixed under the restriction of the adjoint $G_R$ action under the homomorphism $\rho : \C^\times \to G_R$.  

Recall that the odd part of $T^{\mscr{N}=k}$ is
$$
\mc S_+ \otimes \C^k \oplus \mc S_- \otimes \C^k.
$$
The group $\C^\times$ acts on this space via the homomorphism $\rho : \C^\times \to G_R \subset GL_k(\C)$.  (Recall that the copy of $\C^k$ tensored with $\mc S_-$ is dual to the copy which is tensored with $\mc S_+$). 

In our examples, the space $\mc S_+ \otimes \C^k \oplus \mc S_- \otimes \C^k$ decomposes into weight $1$ and weight $-1$ subspaces under this $\C^\times$ action.   Let us introduce a $\Z$-grading on the space of odd elements of $T^{\mscr{N}=k}$, by saying that elements of weight $1$ are in cohomological degree $1$, and elements of weight $-1$ are in cohomological degree $-1$. 

Thus, with this grading, we find a $\Z$-graded Lie algebra
$$
\left( \g_R^{\C^\times} \oplus \mf{sl}_2(\C) \oplus \mf{sl}_2(\C) \right) \ltimes T^{\mscr{N}=k}.
$$ 
The element $Q \in T^{\mscr{N}=k}$ is, by assumption, in cohomological degree $1$. 

It is clear that every element of this $\Z$-graded Lie algebra which commutes with $Q$ acts on the twisted theory.  Further, symmetries of the form $[Q,X]$ act homotopically trivially.  Thus, we see that
\begin{lemma}
A theory twisted by a supercharge $Q$ acquires an action of the differential graded Lie algebra $\left( \mf{sl}_2 \oplus \mf{sl}_2 \oplus \g_R^{\C^\times}\right) \ltimes T^{\mscr{N}=k}$, with differential $Q$. 
\end{lemma}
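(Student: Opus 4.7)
The approach is to trace the symmetry Lie algebra $\mf{h} := (\g_R \oplus \mf{sl}_2 \oplus \mf{sl}_2) \ltimes T^{\mscr{N}=k}$ acting on the untwisted theory $\mscr{L}$ through the three-step twisting recipe---take homotopy $\Pi\C$-fixed points to obtain $\mscr{L}[[t]]$ with twisted differential $\d_{\mscr{L}}+tQ$, invert $t$, then take $\C^\times$-invariants---and to read off the graded Lie algebra of surviving symmetries together with the residual differential.

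First I would equip $\mf{h}$ with the $\Z$-grading in which the $\rho$-weight $j$ subspace is placed in cohomological degree $j$ (and fermion degree $j\bmod 2$), noting that on $T^{\mscr{N}=k}$ this is consistent: the $\rho$-weight $\pm 1$ subspaces of $\mf{h}$ are precisely its odd fermion parts, while $\mf{sl}_2\oplus\mf{sl}_2$ and the translations $V_\C$ lie in weight $0$. Because $\rho$ acts on $\mf{h}$ by Lie algebra automorphisms, the bracket has degree zero for this grading, and the conditions $[Q,Q]=0$ together with the Jacobi identity show that $[Q,-]$ is a square-zero derivation of degree $1$, producing the claimed dg Lie algebra structure.

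Next I would construct the action on the twisted theory. The $\mf{h}$-action on $\mscr{L}$ is $\C^\times$-equivariant for the $\rho$-conjugation action on $\mf{h}$ and the $R$-symmetry action on $\mscr{L}$; it therefore extends $\C[[t]]$-linearly and $\C^\times$-equivariantly to $\mscr{L}[[t]]$ (with $t$ of $\C^\times$-weight $-1$, so that $tQ$ is invariant) and then to $\mscr{L}((t))$. Taking $\C^\times$-invariants, the weight $j$ piece of $\mf{h}$ survives paired with $t^{-j}$; on $\g_R$ only the fixed subalgebra $\g_R^{\C^\times}$ remains, while $\mf{sl}_2 \oplus \mf{sl}_2$ and $T^{\mscr{N}=k}$ contribute in their entirety according to their $\C^\times$-weight grading. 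Compatibility with the twisted differential $\d_{\mscr{L}}+Q$ is automatic: for $X\in\mf{h}$ the original action commutes with $\d_{\mscr{L}}$, so the graded commutator of $X$ with the twisted differential equals the induced action of $[Q,X]$, which is exactly what it means for $\mf{h}$ with differential $[Q,-]$ to act as a dg Lie algebra on $\mscr{L}^{\op{Twisted}}$.

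The main obstacle is the sign and degree bookkeeping under the regrading that simultaneously converts $\rho$-weights into cohomological degrees and fermion parities: one must verify that the semi-direct product bracket, the Jacobi identity, and the supercommutator identity $[\d_{\mscr{L}}+Q,\,X]=[Q,X]$ are all preserved after the shift. This is essentially formal once one has written down the correct sign conventions (matching the fermion/cohomological shift $\mscr{L}^k\rightsquigarrow \Pi^k\mscr{L}^k[-k]$ used earlier), but it is the only non-routine piece of the argument.
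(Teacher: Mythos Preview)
Your proposal is correct and follows the same line of reasoning as the paper, just spelled out in considerably more detail. The paper's argument is essentially the two sentences preceding the lemma: elements of the $\Z$-graded Lie algebra commuting with $Q$ act on the twisted theory, and elements of the form $[Q,X]$ act homotopically trivially; your tracing of the $\mf{h}$-action through the three-step recipe is precisely the unpacking of why those two facts hold.

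One small point: your sentence ``on $\g_R$ only the fixed subalgebra $\g_R^{\C^\times}$ remains'' is in tension with your own mechanism ``the weight $j$ piece of $\mf{h}$ survives paired with $t^{-j}$''. By that mechanism the nonzero-weight pieces of $\g_R$ (e.g.\ the weight $\pm 2$ summands of $\mf{sl}_4$ in the $\mscr{N}=4$ case) also survive, just regraded. The paper sidesteps this by \emph{defining} the $\Z$-graded Lie algebra in question to have only $\g_R^{\C^\times}$ in its degree-zero even part before ever stating the lemma; the lemma then asserts that this particular dg Lie algebra acts, not that it is maximal among what acts. Your argument proves this (and a bit more), so the lemma follows, but the phrasing should be adjusted so as not to suggest that the restriction to $\g_R^{\C^\times}$ is forced by the $\C^\times$-invariants step.
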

Later we will see that the $\mscr{N}=4$ theory admits further twists.  These twists arise from elements of $H^1$ of this differential graded Lie algebra. 

\subsection{}
So far, we have described the anti-self-dual supersymmetric gauge theories via the twistor-space formulation, and we have described the twisted theories arising from the anti-self-dual theories.  We have not, however, described the full supersymmetric gauge theory. 
be
As explained in \cite{BoeMasSki07}, the full supersymmetric gauge theory also has a twistor space description: it is obtained by deforming the action for the anti-self-dual theory by adding a certain explicit term to the action.

Because the full supersymmetric gauge theory is acted on by the same supersymmetry group as the anti-self-dual theory, the twisting construction described above can be applied to the full theory.  \begin{proposition}
The deformation of the anti-self-dual theory into the full supersymmetric gauge theory does not change the minimally-twisted theory.
\end{proposition}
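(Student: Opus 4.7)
The plan is to exhibit the deformation $I := S_{\op{full}} - S_{\op{ASD}}$ as a $Q$-exact local functional on the BV fields of the anti-self-dual theory on the super-twistor space, where $Q = \alpha \otimes e$ is the minimal-twist supercharge. Since a $Q$-exact deformation of a classical BV theory produces a theory which is $L_\infty$-equivalent (as an elliptic $L_\infty$ algebra with invariant pairing) to the undeformed one, and this equivalence is preserved by the twisting procedure of the previous section, the proposition follows.

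Recall from \cite{BoeMasSki07} that $I$ is an explicit local functional on $\PT^{\mscr{N}=k}$ built by pairing a small number of antifields in the ASD BV multiplet, schematically of the form $\int_{\PT^{\mscr{N}=k}} \op{Tr}(B\wedge B)$ with $B$ the component in $\Omega^{3,\ast}(\PT,\g_P^\vee)$ of the cotangent copy. Because both $S_{\op{ASD}}$ and $S_{\op{full}}$ are $T^{\mscr{N}=k}$-invariant, the difference $I$ is annihilated by every element of $T^{\mscr{N}=k}$, so in particular $\{Q,I\} = 0$ in the BV bracket. The crucial step is to promote this $Q$-closedness to $Q$-exactness. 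For this, one picks a second odd element $Q_+ \in T^{\mscr{N}=k}$ with the property that $\{Q,Q_+\}$ is a translation along the $\C^2 \subset \R^4$ cut out by $\alpha$; such a $Q_+$ is supplied by the structure of the super-translation algebra once we have fixed $\rho$ (for $\mscr{N}=2,4$ there is manifest room in the $\mc S_-\otimes W^\vee$ part, and for $\mscr{N}=1$ one uses the unique compatible $\mc S_-$ direction). Acting by $Q_+$ on the antifield $B$ and integrating produces a local functional $\Lambda$ with $I = \{Q,\Lambda\}$, up to translations which become BV-trivial on $\C^2$.

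Finally, one checks that the exactness descends through the twist. The twisting procedure is the composite of taking $\Pi\C$-homotopy-fixed points, inverting the formal parameter $t$, and taking $\C^\times$-invariants; each step is a functor on the $\infty$-category of elliptic $L_\infty$ algebras, so it carries $Q$-exact first-order deformations to first-order deformations of the twisted classical field theory that admit an explicit trivialization. The main obstacle is the explicit construction of $\Lambda$ in each of the three cases $\mscr{N}=1,2,4$; the $\mscr{N}=4$ case is cleanest thanks to the Calabi--Yau structure on $\PT^{\mscr{N}=4}$ and the larger $R$-symmetry $SL_4(\C)$, and the other two cases can be deduced by restricting the $\mscr{N}=4$ construction along the natural inclusions of super-twistor spaces $\PT^{\mscr{N}=1}\hookrightarrow \PT^{\mscr{N}=2}\hookrightarrow \PT^{\mscr{N}=4}$.
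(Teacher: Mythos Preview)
Your approach is genuinely different from the paper's, and it has a real gap at the key step.

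The paper does not attempt to exhibit the deformation $I$ as $Q$-exact directly. Instead, it argues by rigidity: the deformation of the twisted anti-self-dual theory into the twisted full theory is a degree $0$ class in the complex of translation-invariant local functionals that are also invariant under dilation and the $R$-symmetry group. The paper then states (and proves in the appendix) that this cohomology group vanishes in degree $0$. Hence there is simply no room for a nontrivial symmetric deformation, and the twisted full theory must coincide with the twisted anti-self-dual theory. This argument works to all orders in the coupling at once and, as a bonus, the same cohomology computation yields uniqueness of the quantization.

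Your argument, by contrast, tries to produce an explicit primitive $\Lambda$ with $I = \{Q,\Lambda\}$ using a second supercharge $Q_+$ satisfying $[Q,Q_+] = \partial$. The problem is that the supercharge relation only gives
\[
Q(Q_+ I) \;=\; -\,Q_+(Q I) + \partial I \;=\; 0,
\]
since $QI = 0$ and $I$ is translation-invariant. This shows $Q_+ I$ is $Q$-closed, not that $I$ is $Q$-exact; you have produced a new cocycle, not a primitive for the old one. Your sentence ``Acting by $Q_+$ on the antifield $B$ and integrating produces a local functional $\Lambda$ with $I = \{Q,\Lambda\}$'' is the entire content of the proof, and it is asserted rather than verified. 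If you try to make it literal, say $\Lambda = \int \op{Tr}((Q_+ B)\wedge B)$, then $Q\Lambda$ involves $QB$ and $\partial B$ and does not reduce to $\int \op{Tr}(B\wedge B)$ without further, unstated identities. The restriction-from-$\mscr{N}=4$ idea at the end does not repair this, because the gap is already present in the $\mscr{N}=4$ case.

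If you want to salvage a direct argument, you would have to actually compute $Q$ acting on a candidate $\Lambda$ in the twistor presentation and match it with the explicit interaction term from \cite{BoeMasSki07}; that is a nontrivial calculation, not a formal consequence of the supertranslation algebra. The paper's cohomological route avoids this entirely.
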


The minimally-twisted supersymmetric gauge theory we described above has an action of an $R$-symmetry group, and is also invariant under translations and the group $\op{GL}_2(\C)$ acting on $\C^2$.  The deformation of this theory we are interested in has the same symmetry group, inherited from the symmetries of the full supersymmetric gauge theory. 

In order to prove the result, we will verify that the minimally twisted theories we analyzed earlier admit no deformations with these symmetries.  
\begin{theorem}
Let $G$ be a simple algebraic group, and $\g$ its Lie algebra.   Let us consider the minimally-twisted $\mscr{N}=1,2,4$ supersymmetric gauge theories on $\C^2$, perturbing around the trivial $G$-bundle.   

Then, the cohomology of the complex of translation-invariant local functionals for the $\mscr{N}=1,2,4$ minimally twisted field theories, also invariant under the action of $\C^\times$ by dilation on $\C^2$ and the appropriate $R$-symmetry group, is trivial in degrees $0$ and $< -1$, and isomorphic to $H^5(\g)$ in degree $1$.  

For the $\mscr{N}=1$ theory, the degree $-1$ cohomology group is also trivial.  For the $\mscr{N}=2,4$ theories, the degree $-1$ cohomology group is $\C$, which corresponds to the Lie algebra of the center of the $R$-symmetry group acting on the twisted theory. 
\end{theorem}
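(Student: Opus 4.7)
The plan is to compute these cohomology groups by reducing to algebraic Lie algebra cohomology via the jet bundle formalism of \cite{Cos11}, then performing a finite-dimensional spectral-sequence calculation. Translation-invariant local functionals on an elliptic $L_\infty$ algebra $\mscr{L}$ on $\C^2$ are computed, up to quasi-isomorphism, by the reduced Chevalley-Eilenberg cochains (with appropriate topology and modulo the action of $\partial_{z_1},\partial_{z_2}$) of the $\infty$-jet Lie algebra $J_0\mscr{L}$ at the origin. Since the Dolbeault complex on a formal polydisc is quasi-isomorphic to formal holomorphic power series, for our three theories---all of the form $\Omega^{0,\ast}(\C^2, \g\otimes A\oplus \g^\vee[k-1]\otimes A)$ with $A$ the appropriate graded Frobenius algebra---we obtain
$$\Ool(\mscr{L})^{\mathrm{trans}} \simeq C^\ast_{\op{red}}\bigl(\g\otimes A\otimes \C[[z_1,z_2]]\,\oplus\,\g^\vee[k-1]\otimes A\otimes\C[[z_1,z_2]]\bigr)\big/(\partial_{z_1},\partial_{z_2}).$$

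Next, I would impose $\C^\times$-dilation invariance and $R$-symmetry invariance. The dilation gives $z_i$ weight $-1$ and the integration measure weight $2$, so the weight-zero condition forces a precise matching between the number of $z_i$ factors, the cohomological degree, and the $A$-degree; together with $R$-invariance, this cuts the computation down to cohomology of $\g$ with coefficients in a finite-dimensional (for each cohomological degree) module built from the $R$-invariant part of $A$, polynomials in $z_i$, and the cotangent directions. The resulting graded module is small enough that cohomology can be read off.

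Then one runs a polynomial-degree spectral sequence. Vanishing in degrees $0$ and $<-1$ follows by enumerating possible contributions to low cohomological degree; no invariant term of the correct weight can arise. The degree-$1$ class is produced by a Chevalley-Eilenberg $5$-cocycle on $\g$ (necessarily built from two copies of the invariant pairing on $\g$, since the residual $R$-symmetry eliminates everything else), paired against a weight-$2$ polynomial differential form on $\C^2$; this is precisely the local functional controlling obstructions to quantization of the holomorphic theory, and so generates a copy of $H^5(\g)$. For $\mscr{N}=2,4$, the degree-$(-1)$ class arises from the unique (up to scale) $R$-invariant nilpotent central element of $A$---namely $\eps$ for $\mscr{N}=2$ and $\eps_1\eps_2$ for $\mscr{N}=4$---which produces an infinitesimal automorphism of the Frobenius algebra preserving its trace and generates the Lie algebra of the center of the residual $R$-symmetry.

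The main obstacle is showing that no further differentials in the spectral sequence kill the generator of $H^5(\g)$ in ghost degree $1$, and that no additional classes appear at higher polynomial degree. This amounts to a careful degree count: the $H^5(\g)$ generator sits in a corner of the bigraded $E_1$-page where the weight-$2$ constraint leaves no room for an incoming differential, and all higher-polynomial contributions carry enough $z_i$-weight that $R$-invariance plus dilation-invariance forces them to vanish. This is directly analogous to the Gelfand-Fuks-type computations identifying obstruction classes for holomorphic Chern-Simons-type theories in complex dimension $n$ (where the relevant class lives in $H^{2n+1}(\g)$), and is very close to the argument carried out in detail for the $\mscr{N}=1$ twist in \cite{Cos13}.
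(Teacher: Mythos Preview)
Your overall strategy matches the paper's: reduce via the jet formalism of \cite{Cos11} to reduced Chevalley--Eilenberg cochains of a formal jet Lie algebra, pass from the Dolbeault model to holomorphic power series, impose dilation and $R$-invariance, and run a spectral sequence. The gap is in your treatment of degrees $0$ and $-1$.

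You assert that ``no invariant term of the correct weight can arise'' in degrees $0$ and $<-1$. This is false on the first page of the spectral sequence. After imposing $\C^\times \times GL_k(\C)$-invariance, the $E_1$ page still contains $H^3(\g)$ in degree $-1$ (coming from the weight-$0$ part $C^\ast_{\mathrm{red}}(\g)\,\partial_1\partial_2\,\d z_1\d z_2$) and $H^0(\g,\Sym^2\g^\vee)$ in degree $0$ (coming from the weight-$2$ part $C^\ast(\g,\Sym^2\g^\vee\, z_1^\vee z_2^\vee)\,\d z_1\d z_2$). Neither of these is zero for a simple $\g$. The actual content of the computation is that the differential on the \emph{next} page --- induced by the action of $\partial_{z_1},\partial_{z_2}$ --- maps $H^3(\g)\,\partial_1\partial_2$ isomorphically onto $H^0(\g,\Sym^2\g^\vee\, z_1^\vee z_2^\vee)$, so the two classes cancel. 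This is exactly the mechanism that leaves $H^5(\g)$ as the surviving degree-$1$ class, and without it you would have an unwanted degree-$0$ deformation in all three theories and an extra degree-$(-1)$ class even for $\mscr{N}=1$.

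Your description of the degree-$(-1)$ class for $\mscr{N}=2,4$ is in the right spirit but imprecise. It does not come directly from a central nilpotent of $A$; rather, the relevant summand on $E_1$ is $H^\ast(\g_k,\g_k)^{GL_k(\C)}[2]$ with $\g_k=\g[\eps_1,\dots,\eps_k]$. For $k=0$ this vanishes (since $H^\ast(\g,\g)=0$), while for $k=1,2$ a further internal filtration shows that the only surviving $GL_k$-invariant piece in the range of interest is $H^0(\g,\Sym^2\g)\cong\C$ in degree $-1$. For $k=2$ this requires checking one more cancellation, between a copy of $H^0(\g,\Sym^2\g)$ and $H^0(\g,\wedge^3\g)$, on a secondary spectral sequence.
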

The proof is presented in the appendix.  

I should remark that this theorem has an immediate consequence.
\begin{corollary}
The minimally-twisted supersymmetric gauge theories on $\C^2$ all admit a unique quantization, invariant under translation, dilation, and $R$-symmetry. 
\end{corollary}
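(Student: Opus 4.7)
The proof combines two ingredients: the general obstruction-deformation theory for BV quantization developed in \cite{Cos11}, and the cohomological computation given in the preceding theorem.

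My plan is to set up the problem in the standard framework. Recall that a quantization of a classical BV theory $\mscr{L}$ is built inductively order by order in $\hbar$. Given a quantization modulo $\hbar^{n+1}$, the obstruction to lifting to a quantization modulo $\hbar^{n+2}$ is a cohomology class in degree $+1$ of the complex of local functionals (modulo BRST-exact ones); when this class vanishes, the space of lifts is a torsor over the degree $0$ cohomology of the same complex. When one demands equivariance under a group $G$, both the obstruction and deformation complexes are replaced by their $G$-invariant subcomplexes. Here $G$ is generated by translations of $\C^2$, the dilation $\C^\times$-action, and the $R$-symmetry acting on each minimally twisted theory.

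For \emph{uniqueness}, the previous theorem asserts that the $H^0$ of the equivariant deformation complex vanishes in all three cases ($\mscr{N}=1,2,4$). Thus if two equivariant quantizations agree modulo $\hbar^{n+1}$, they have no freedom to differ modulo $\hbar^{n+2}$; by induction on $n$, together with the uniqueness of the classical theory at order $\hbar^0$, any two equivariant quantizations are canonically equivalent.

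For \emph{existence}, I must show that the actual obstruction class at each order, which a priori lives in $H^1 = H^5(\g)$ by the theorem, in fact vanishes. The strategy is twofold. First, a power-counting / weight argument along the lines of \cite{Cos11a}: the $\C^\times$-dilation invariance forces the obstruction class at $\ell$ loops to have a definite weight under the dilation, and for holomorphic theories on $\C^2$ only one-loop contributions are compatible with this weight constraint (essentially because the propagator is the Dolbeault Green's kernel and dimensional analysis leaves no room for higher-loop anomalies invariant under the full symmetry group). Second, at one loop, the obstruction is computed by the standard BV anomaly formula applied to a holomorphic theory on a complex surface; the resulting local functional is of the form $\int_{\C^2} \op{Tr}_{\g}(\cdots)$ of a cocycle of the required weight, and one verifies by a direct inspection that the sole dilation- and $R$-symmetry-invariant candidate vanishes identically. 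Alternatively, one can construct a quantization by hand: because each minimally twisted theory is a cotangent theory with holomorphic kinetic term and an elementary BV Laplacian coming from the Dolbeault complex on $\C^2$, one defines the quantum master action in terms of the canonical Green's kernel for $\dbar$, and checks directly that no counterterms are required beyond one loop. Either route produces an honest quantization, and uniqueness then promotes it to \emph{the} quantization.

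The main obstacle is the explicit analysis at one loop. The theorem tells us where the anomaly could live, namely $H^5(\g)$, which is exactly the home of Chern-Simons-type invariants; the task is to show that the specific class produced by the one-loop Feynman integral on $\C^2$, with all the imposed symmetries, is zero. I expect this to follow from a combination of a weight computation under the dilation $\C^\times$ (which controls the admissible degree of the local cocycle in the de Rham / Dolbeault forms on $\C^2$) and the fact that a degree-$5$ Lie algebra cocycle has no translation-invariant realization as a local functional on a four-dimensional space with the appropriate fermion and $R$-charge gradings.
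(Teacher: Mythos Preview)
Your uniqueness argument is fine and matches the paper's implicit reasoning: $H^0=0$ of the equivariant deformation complex means no room for ambiguity at any order in $\hbar$.

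Your existence argument, however, has a genuine gap. You try to kill the obstruction in $H^1\cong H^5(\g)$ by a power-counting/weight argument and a direct one-loop analysis, but the reasoning is left at ``I expect this to follow from\ldots''. Worse, your final sentence --- that a degree-$5$ Lie algebra cocycle ``has no translation-invariant realization as a local functional on a four-dimensional space with the appropriate fermion and $R$-charge gradings'' --- directly contradicts the theorem you are invoking: that theorem computes precisely this invariant $H^1$ and finds it equal to $H^5(\g)$, which is nonzero for $\g=\mf{sl}_n$, $n\ge 3$. So the cocycle \emph{does} have such a realization, and your proposed mechanism cannot kill it. The alternative route you sketch (constructing a quantization by hand and checking no counterterms are needed) is not an argument either; it is a restatement of the problem.

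The paper's proof uses a one-line idea you are missing entirely: the outer automorphism group $\op{Out}(\g)$ acts on the theory (and on the obstruction complex), so the obstruction class must lie in $H^5(\g)^{\op{Out}(\g)}$. For every simple $\g$ this invariant subspace is zero --- $H^5(\g)$ is nonzero only in type $A_{n\ge 2}$, where it is spanned by the cubic Casimir cocycle, and the outer automorphism $X\mapsto -X^T$ negates it. That is the whole argument.
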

\begin{proof}
Indeed, the obstruction to quantizing lies in $H^5(\g)$.  However, the outer automorphism group of $\g$ acts on everything, and the obstruction must be invariant under this symmetry.  Since, for any semi-simple Lie algebra $\g$, there are no elements of $H^5(\g)$ invariant under $\op{Out}(\g)$, we conclude that the obstruction must vanish.  
\end{proof}
This argument was also used in \cite{Cos11}, Chapter 6 to prove the existence of a quantization of ordinary Yang-Mills theory.  

\section{Twisted theories on a complex surface}
Next, I will explain how these twisted theories can be put on an arbitrary complex surface, and not just on $\C^2$.   

The twisted $\mscr{N}=1$ theory is the cotangent theory to the moduli problem of holomorphic bundles on $\C^2$.  This theory makes sense on any complex surface.
\begin{definition}
Let $X$ be a complex surface.  Then $\mscr{N}=1$ twisted supersymmetric gauge theory on $X$ is the cotangent theory to the moduli problem of holomorphic principal $G$-bundles on $X$.
\end{definition}
Thus, if $P \to X$ is a principal $G$-bundle, the elliptic Lie algebra describing the $\mscr{N}=1$ theory is 
$$
\mscr{L}_{\mscr{N}=1}(X) = \Omega^{0,\ast}(X, \g_P) \oplus \Omega^{0,\ast}(X, \g^\vee_P \otimes K_X [-1] ) .
$$

\subsection{}
The twisted $\mscr{N}=2$ and $\mscr{N}=4$ theories are defined to be the cotangent theories to the moduli of bundles on certain graded complex manifolds extending $\C^2$.  For the $\mscr{N}=2$ theory, the graded complex manifold is $\C[-1] \to \C^2$.    For the $\mscr{N}=4$ theory, the graded complex manifold is $\C^2[1] \to \C^2$.

Thus, the $\mscr{N}=2$ theory can be defined on any graded complex manifold which locally looks like $\C[-1] \to \C^2$.   This leads to the following definition.
\begin{definition}
Let $X$ be a complex surface, and let $L \to X$ be a line bundle.  Then the $L$-twisted $\mscr{N}=2$ theory is the cotangent theory to the moduli of holomorphic principal $G$-bundles on the graded complex manifold $L[-1] \to X$.

Let $X$ be a complex surface, and let $V \to X$ be a rank two vector bundle.  Then the $V$-twisted $\mscr{N}=4$ theory is the cotangent theory to the moduli of holomorphic principal $G$-bundles on the graded complex manifold $V[1] \to X$.
\end{definition}
\begin{remark}
In the physics literature, a universal choice of such a bundle $V$ is part of what is called a ``twist''.  Indeed, physicists consider the data of a twist to include an action of $\op{Spin}(4)$ on the theory under which the chosen supercharge $Q$ is invariant. Locally, $\op{Spin}(4)$ acts on the space of fields; globally, this means that fields are sections of a vector bundle which is associated to the $\op{Spin}(4)$-frame bundle and to some representaion of $\op{Spin}(4)$.  

We are considering holomorphic twists, which can not be $\op{Spin}(4)$-invariant. However, they are $\op{SL}_2$ and even $\op{GL}_2$ invariant, and they can be made $\op{GL}_2$-invariant in more than one way. On $\C^2$, different choices of $\op{GL}_2$-action on the space of fields lead, globally, to the fields being sections of different bundles. 
\end{remark}

\subsection{}
The most important examples are when the vector bundle is naturally associated to $X$.  For the $\mscr{N}=2$ theory, the example we will be interested in is when the line bundle $L$ is the trivial line bundle. We will refer to this as ``the'' twisted $\mscr{N}=2$ gauge theory.  

In all examples we have considered so far, we have only twisted by one supersymmetry operator.  We will refer to a theory twisted in this way as a minimally twisted theory.   In many examples, however, one can perform further twists.

For the $\mscr{N}=2$ gauge theory (with trivial line bundle $L$) the theory can be twisted further to give a topological theory.  This further twist is the classical field theory related to Donaldson theory; in the same was as the fully-twisted $A$-model is related to the theory of Gromov-Witten invariants.

The elliptic Lie algebra on $X$ describing the minimally twisted $\mscr{N}=2$ theory is
$$
\Omega^{0,\ast}(X, \g_P[\eps] \oplus K_X \otimes \g_P^\vee [\eps] [-2] )
$$ 
where $\eps$ is a parameter of degree $-1$.  The pairing arises from the natural pairing between $\g_P$ and $\g_P^\vee$ and the trace map 
\begin{align*}
\op{Tr} : \C[\eps] &\to \C\\
\op{Tr}(\eps) &= 1.
\end{align*}

\subsection{} 
For the $\mscr{N}=4$ theory, there are two natural choices of rank $2$ vector bundle $V$. One is when $V$ is trivial.  This version of the twisted $\mscr{N}=4$ theory was considered by Vafa and Witten in \cite{VafWit94}; it admits a further twist into a classical topological field theory, whose partition function is supposed to be the Euler characteristic of moduli spaces of holomorphic bundles. 

The other natural choice is when $V$ is the tangent bundle.  This version of the twisted $\mscr{N}=4$ theory was considered by Kapustin and Witten \cite{KapWit06}.  This is the only version of the minimally twisted $\mscr{N}=4$ theory we will consider from now on, and we will refer to it as ``the'' twisted $\mscr{N}=4$ theory.   This theory has a very familiar geometric interpretation: it is the cotangent theory to the derived moduli space of Higgs bundles on $X$.  Recall that a Higgs bundle on $X$ is a holomorphic principal $G$-bundle $P$ with an element $\phi \in H^0 ( X, \g_P \otimes T^\ast X)$ satisfying $[\phi, \phi ] = 0$. 
 
Kapustin and Witten consider theories where one twists by several supersymmetry operators, and not just by one.  The theory described above is the minimally twisted $\mscr{N}=4$ theory, where we have twisted only by a single supersymmetry operator.  Later we will consider further twists of the minimally twisted theory, which lead to the $\P^1$ of topological theories considered by Kapustin and Witten.  

The elliptic Lie algebra describing this twisted $\mscr{N}=4$ theory is 
$$
\mscr{L}_{\mscr{N}=4}(X) = \Omega^{\ast,\ast}(X, \g_P \oplus \g^\vee_P[1] ),
$$
where the differential is the $\dbar$ operator, and of course, $\Omega^{p,q}(X)$ is situated in degree $p+q$.

\section{The Kapustin-Witten family of twisted $N=4$ theories} 

So far we have constructed the minimal twists of the $\mscr{N}=1,2,4$ supersymmetric gauge theories, on a complex surface.  In this section I will show how the twisted $\mscr{N}=4$ theory we have constructed can be twisted further, to yield a $\mbb{P}^1$ of (classical) topological field theories.  

Let $G$ be a simple algebraic group, and let $P$ be a $G$ local system on $X$.  To describe the $\P^1$ of twisted theories where we perturb around $P$, I will just write down a $\P^1$ of elliptic Lie algebras on $X$. 

Let $(s,t) \in \C^2$.  Let us define an elliptic Lie algebra 
$$
\mscr{L}(s,t) = \left(   \Omega^{\ast,\ast}( X, \g_P \otimes \C[\eps ] \right)
$$
where $\eps$ is a parameter of degree $-1$.  The differential is
$$
\dbar + s \partial + t \dpa{\eps}.
$$
The Lie bracket on $\mscr{L}(s,t)$ is independent of $s$ and $t$. 

The elliptic Lie algebra $\mscr{L}(s,t)$ has an invariant pairing given by the formula
$$
\ip{\eps \alpha, \beta } = \int_X \ip{\alpha,\beta}_{\g}.
$$
Here $\alpha,\beta \in \Omega^{\ast,\ast}(X,\g_P)$, and $\ip{-,-}_\g$ is a chosen invariant pairing on the Lie algebra $\g$ of $G$ (which, since $G$ is simple, is unique up to scale).

When $s = t = 0$, this family of elliptic Lie algebras coincides with that describing the minimally twisted theory we considered earlier.

The elliptic Lie algebra $\mscr{L}(0,0)$ has a $\C^\times$ action, where an element
$$
\alpha \in \eps^r\Omega^{p,q} (X, \g_P) 
$$
has weight $p - 4 r$.  This $\C^\times$ action is easily seen to preserve the pairing.   The action therefore gives an isomorphism of classical field theories
$$
\mscr{L}(s,t) \iso \mscr {L}( \lambda s, \lambda^{4} t ) .
$$
Thus, the family of twisted theories is parametrized by a weighted $\P^1$. 

\subsection{}
Note that, when $t = 0$, the theory $\mscr{L}(1,0)$ is the cotangent theory to elliptic moduli problem of $G$ local systems on $X$.      As we will see shortly, this theory becomes, on dimensional reduction, the $B$-model with target the space of $G$-local systems on a curve.  We will this value of the parameter the \emph{$B$-model point}.

For $s = 1$ and $t \in \C$, the theory $\mscr{L}(1,t)$ is a ``twisted'' form of the cotangent theory to the moduli of $G$ local systems on $X$.   Let $\op{Loc}_G(X)$ denote the derived moduli space of $G$-local systems on $X$.  As always, I will only be precise at the formal level: the elliptic Lie algebra describing the formal neighbourhood of a $G$-local system $P$ is $\Omega^\ast(X, \g_P)$, with the de Rham differential. 

Note that the Poincar\'e pairing gives this elliptic Lie algebra a pairing of cohomological degree $-4$.  It follows that the derived moduli space $\op{Loc}_G(X)$ has a symplectic pairing of cohomological degree $-2$.    This symplectic pairing can be interpreted as a Poisson bracket of cohomological degree $2$.  The Poisson bivector $P$ is then a cohomological degree $0$ function on $T^\ast[-1] \op{Loc}_G(X)$, which is quadratic along the cotangent fibres.   The Jacobi identity implies that $\{P,P\}= 0$, where $\{-,-\}$ refers to the Poisson bracket on functions on $T^\ast[-1] \op{Loc}_G(X)$.  

Thus, we can deform the $0$-symplectic manifold $T^\ast[-1] \op{Loc}_G(X)$ by adding $t \{P,-\}$ to the differential.  This one-parameter family of $0$-symplectic manifolds (for $t \in \mbb{A}^1$) describes the Kapustin-Witten family of theories at the points $(1:t)$.  

Note that we can use the symplectic form on $\op{Loc}_G(X)$ to identify $T^\ast[-1] \op{Loc}_G(X)$ with $T[1] \op{Loc}_G(X)$.  Functions on $T[1] \op{Loc}_G(X)$ are forms on $\op{Loc}_G(X)$.  Under this identification, the operation $\{P,-\}$ (on functions on $T^\ast[-1] \op{Loc}_G(X)$)  becomes the de Rham differential.  Thus, we can think of the $0$-symplectic manifold describing the Kapustin-Witten theory at a point $(1:t)$ with $t \neq 0$ as being equivalent to the de Rham stack of $\op{Loc}_G(X)$.   With this identification, the symplectic form depends on $t$.  

\subsection{}
Finally, let us discuss the theory when $s = 0$.  The theory $\mscr{L}(0,1)$ described by elliptic Lie algebra
$$
\Omega^{\ast,\ast} ( X, \g_P [\eps ] )
$$
with differential $\dbar + \frac{\d}{\d \eps}$.  This elliptic Lie algebra is contractible; just like the elliptic $L_\infty$ algebra describing the fully-twisted $A$-model.   Thus, perturbation theory does not say anything about the theory with this parameter.  When we dimensionally reduce, this theory becomes the $A$-model with target the stack of Higgs bundles on a curve.
 
\section{Dimensional reduction}
In this section I will introduce the general idea of dimensional reduction.   Shortly we will apply this idea to relate the twisted $4$-dimensional gauge theories we have been studying to the $2$-dimensional field theories we discussed earlier: the various twists of the $A$- and $B$-models. 

Because of lack of space, I will be a little informal in the general discussion of dimensional reduction.  As I mentioned in the introduction, I will not attempt to give detailed definitions of global objects of derived algebraic geometry. I will, however, try to be more precise at the level of formal derived spaces. 

\subsection{}
The basic idea of dimensional reduction is very simple.  We have defined a (perturbative) field theory on a space $X$ to be a sheaf of (formal) derived spaces on $X$, together with a symplectic form.   If $f : X \to Y$ is a fibration, and $\mc M$ is a sheaf of formal derived spaces on $X$, then we can define a push forward sheaf $f_\ast \mc M$.    If $\mc M$ is a classical field theory -- that is, equipped with a symplectic form of degree $-1$ -- then so is $f_\ast \mc M$.  We call $f_\ast \mc M$ the \emph{dimensional reduction} of the field theory $\mc M$ on $X$. 

Let us consider a simple example.  Let $M$ and $N$ be complex manifolds.  Let $G$ be an algebraic group, and let $\op{Bun}_G (N)$ denote the (derived) moduli stack of $G$-bundles on $N$.  

Then, the derived moduli stack of holomorphic maps $M \to \op{Bun}_G(N)$ is the same as the derived moduli stack of holomorphic $G$-bundles on $M \times N$. 

Thus, we see that an elliptic moduli problem on $M \times N$ (that describing holomorphic $G$-bundles)  can be turned into an elliptic moduli problem on $M$ (that describing maps from $M$ to $\op{Bun}_G(N)$).

\subsection{}
Let us now give a formal definition of dimensional reduction.  We will work at the perturbative level, where a classical field theory is described by an elliptic $L_\infty$ algebra with an invariant pairing. 

Let $\pi : F \to M$ be a proper fibration of manifolds (so that $\pi$ has compact fibres).  Let $\L$ be an elliptic $L_\infty$ algebra on $F$. Let $L$ be the underlying graded vector bundle of $\L$. 

We would like to define the elliptic $L_\infty$ algebra on $M$ to be the sheaf-theoretic pushforward $\pi_\ast \L$.  This, however, does not obey the axioms I gave for an elliptic $L_\infty$ algebra, because $\pi_\ast \L$ does not arise as the sections of a finite-dimensional graded vector bundle on $M$.  

We can, instead, look for an elliptic $L_\infty$ algebra on $M$ which is quasi-isomorphic to $\pi_\ast \L$.  This gives a precise definition of a dimensional reduction of a formal elliptic moduli problem.
\begin{definition}
Let $\L$ be an elliptic $L_\infty$ algebra on $F$.  Then an elliptic $L_\infty$ algebra $\til{\L}$ on $M$ is a dimensional reduction of $\L$ if we are given a quasi-isomorphism of sheaves of $L_\infty$ algebras
$$
\til{\L} \simeq \pi_\ast \L.
$$
\end{definition}
Let us see how this works in case when $M$ and $N$ are Riemann surfaces, and the elliptic moduli problem we are considering is that of holomorphic $G$-bundles on $M \times N$.  

Let $P \to M \times N$ be such a $G$-bundle.  The elliptic Lie algebra controlling deformations of $P$ is $\Omega^{0,\ast}(M \times N, \g_P)$, where $\g_P$ denotes the adjoint bundle of Lie algebras on $M \times N$ associated to $P$. 

Dimensional reduction, in this case, means that we consider $P$ to be a map from $M$ to the moduli stack $\op{Bun}_G(N)$ of holomorphic $G$-bundles on $N$.  Note that because $N$ is a Riemann surface, $\op{Bun}_G(N)$ can be treated as an ordinary (non-derived) stack.  

Let $\phi : M \to \op{Bun}_G(N)$.   Let $T\op{Bun}_G(N)$ denote the tangent complex of $\op{Bun}_G(N)$.    There is an $L_\infty$ structure on $\Omega^{0,\ast}( M, \phi^\ast T \op{Bun}_G(N)) $ which controls deformations of the map $\phi$. 

In this example, the statement that the elliptic moduli problem on $M$ is dimensionally reduced from that on $M \times N$ means that there is a canonical equivalence between two sheaves of $L_\infty$ algebras on $M$.  The first sheaf of $L_\infty$ algebras sends $U \subset M$ to
$$
\g_1(U) = \Omega^{0,\ast}(U , \phi^\ast T \op{Bun}_G(N ) ).
$$
The second sheaf of $L_\infty$ algebras sends $U \subset M$ to 
$$
\g_2(U) = \Omega^{0,\ast}(U \times N, \g_P) 
$$
where $g_P$ is the principal $G$-bundle on $M \times N$ arising from the map $M \to \op{Bun}_G(N)$.  

The existence of such an equivalence of sheaves of $L_\infty$ algebras is automatic from the universal property of $\op{Bun}_G(N)$.   

In practise, however, in this and in other examples, there is no need to replace the sheaf $\g_2(U)$ of $L_\infty$ algebras on $M$ by a smaller sheaf.  The sheaf $\g_2(U)$ does not strictly conform to the definition of an elliptic $L_\infty$ algebra I gave earlier: it does not arise as the sections of a finite rank graded vector bundle on $M$.  However, there are no essential difficulties caused by working directly with a sheaf of $L_\infty$ algebras of the form $\g_2$.  
  
\subsection{}
The factorization algebra point of view \cite{CosGwi11} on perturbative quantum field theory gives a clean way to think about dimensional reduction.  Let $\pi : F \to M$ be a proper fibration of manifolds.  Let $\mscr{F}$ be a factorization algebra on $F$, in the sense of \cite{CosGwi11}.   If we are dealing with a classical field theory on $F$, then $\mscr{F}$ will be a commutative factorization algebra with a Poisson bracket of degree $1$.  If we are dealing with a quantum field theory, then $F$ will be a factorization algebra over $\R[[\hbar]]$. 

In either case, we can define a factorization algebra $\pi_\ast F$ on $M$ be setting
$$
(\pi_\ast F) (U) = F(\pi^{-1} (U))
$$
for an open subset $U \subset M$.    This pushforward factorization algebra describes the observables of the dimensionally reduced theory. 

\subsection{}
Many of the theories we have considered in this paper are the cotangent theories associated to elliptic moduli problems.  It is straightforward to verify, from the definitions given above, that dimensional reduction commutes with the operation of taking the cotangent theory associated to an elliptic moduli problem.  

\section{From $4$-dimensional gauge theories to $2$-dimensional $\sigma$-models}

In this section we will see how dimensional reduction of the various twisted $4$-dimensional gauge theories we have considered lead to $2$-dimensional $\sigma$-models with target various versions of the moduli stack of $G$-bundles on a Riemann surface.

\subsection{}
Let us start with the twisted $\mscr{N}=1$ gauge theory on a product $\Sigma_1 \times \Sigma_2$ of two Riemann surfaces.    The $4$-dimensional theory is the cotangent theory to the moduli of holomorphic $G$-bundles on $\Sigma_1 \times \Sigma_2$.  It follows that the $2$-dimensional theory (dimensionally reduced along $\Sigma_2$) is the cotangent theory to the moduli of holomorphic maps from $\Sigma_1$ to $\op{Bun}_G(\Sigma_2)$.  

Recall that, given any complex manifold $X$, the cotangent theory to the moduli of holomorphic maps from a Riemann surface $\Sigma$ to $X$ is known in the physics literature as a twisted $(0,2)$ $\sigma$-model.    It is believed \cite{Wit05,Cos10a} that the factorization algebra of quantum observables of this theory -- or at least, that part of the factorization algebra which only considers constant holomorphic maps to $X$ -- is the chiral differential operators of $X$.  

Thus, one expects that factorization algebra constructed from the twisted $\mscr{N}=1$ theory should be closely related to the chiral differential operators of $\op{Bun}_G(\Sigma_2)$; and that the partition function of this theory contains the Witten elliptic genus of the moduli stack $\op{Bun}_G(\Sigma_2)$. 

In \cite{Cos13}, the twisted $\mscr{N}=1$ theory is constructed on all complex surfaces $M$ with $c_1(M) = 0$. This paper analyzes the factorization algebra associated to a twist of the deformed $\mscr{N}=1$ theory, and shows that it is related to the Yangian. 

\subsection{}
Next, let us consider the minimally twisted $\mscr{N}=2$ theory.  Recall that this is the cotangent theory to the derived moduli space of $G$-bundles on a complex surface $M$, together with a holomorphic section of the adjoint bundle $\g_P$. 

Upon dimensional reduction along a curve $\Sigma_2$, this becomes the cotangent theory to the space of holomorphic maps from $\Sigma_1$ to the derived moduli stack of pairs 
$$
\{ (P,\phi) \mid P \in \op{Bun}_G(\Sigma_2), \ \phi \in H^0(\Sigma, \g_P) \} .
$$
(When I say the derived moduli stack of such pairs, it is implicitly assumed that the higher cohomology $H^i(\Sigma, \g_P)$ is included as part of the derived structure).  

Note that we can identify the tangent complex $T_P\op{Bun}_G(\Sigma_2)$ as
$$
T_P\op{Bun}_G(\Sigma_2) = H^\ast( \Sigma, \g_P)[1]. 
$$
Thus, the twisted $\mscr{N}=2$ gauge theory becomes, upon dimensional reduction, the cotangent theory to the space of holomorphic maps from $\Sigma_1$ to $T[-1] \op{Bun}_G(\Sigma_2)$. 

This agrees, as a $\Z/2$ graded theory, with the $\tfrac{1}{2}$-twisted $A$-model on $\op{Bun}_G$.    As I discussed earlier, the $\Z$ grading we gave to supersymmetric field theories is a little arbitrary;  thus, we can say that, after changing the grading on our minimally twisted $\mscr{N}=2$ theory, it dimensionally reduces to the $\tfrac{1}{2}$-twisted $A$-model on $\op{Bun}_G$. 

It is not difficult to see that the minimally twisted $\mscr{N}=2$ theory can be further twisted into a theory which dimensionally reduces to the fully-twisted $A$-model.     This further twist of the $\mscr{N}=2$ theory is the one considered by Witten \cite{Wit88a} in his study of Donaldson theory.

\section{Dimensional reduction of the $\mscr{N}=4$ theory}

In this section, we will show the following.
\begin{proposition}
The dimensional reduction of the minimally-twisted Kapustin-Witten theory on a product of two Riemann surfaces $\Sigma_1 \times \Sigma_2$ is the $\tfrac{1}{2}$-twisted $B$-model with target $T^\ast \op{Bun}_G( \Sigma_2 )$.
\end{proposition}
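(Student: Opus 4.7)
The strategy is to apply the principle (stated earlier in the excerpt) that dimensional reduction commutes with the cotangent construction, and then to match the underlying base elliptic moduli problems directly. The minimally-twisted $\mscr{N}=4$ theory on $X = \Sigma_1\times\Sigma_2$ was identified in the excerpt as the cotangent theory $T^*[-1]\mc M^{\mathrm{Higgs}}$ to the moduli of Higgs bundles on $X$; and by definition the $\tfrac12$-twisted $B$-model with target $Y=T^*\op{Bun}_G(\Sigma_2)$ is the cotangent theory to the elliptic moduli problem $\mc M^{T[1]\mathrm{maps}}(\Sigma_1,Y)$ of holomorphic maps $T[1]\Sigma_{1,\dbar}\to Y_{\dbar}$. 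Hence it suffices to exhibit a quasi-isomorphism of sheaves of formal elliptic moduli problems on $\Sigma_1$,
$$\pi_{1*}\mc M^{\mathrm{Higgs}}(\Sigma_1\times\Sigma_2)\;\simeq\;\mc M^{T[1]\mathrm{maps}}(\Sigma_1,\,T^*\op{Bun}_G(\Sigma_2)).$$

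To produce this equivalence I would decompose the data on each side. A Higgs bundle on $X$ is $(P,\phi)$ with $\phi\in H^0(X,\g_P\otimes T^*X)$ and $[\phi,\phi]=0$; since each $\Sigma_i$ is a curve, $\wedge^2T^*\Sigma_i=0$, so writing $\phi=\phi_1+\phi_2$ with $\phi_i\in H^0(X,\g_P\otimes\pi_i^*K_{\Sigma_i})$ the integrability condition collapses to $[\phi_1,\phi_2]=0$. Using the pairing on $\g$ together with Serre duality along the $\Sigma_2$-fiber, the tangent complex of $T^*\op{Bun}_G(\Sigma_2)$ at $(P_2,\phi_2|_{\Sigma_2})$ is $R\Gamma(\Sigma_2,\g_{P_2})[1]\oplus R\Gamma(\Sigma_2,\g_{P_2}\otimes K_{\Sigma_2})$. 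The pair $(P,\phi_2)$ is then precisely a $\Sigma_1$-family of points of $T^*\op{Bun}_G(\Sigma_2)$, i.e.\ a holomorphic map $\bar f:\Sigma_1\to T^*\op{Bun}_G(\Sigma_2)$; the remaining datum $\phi_1\in H^0(X,\g_P\otimes\pi_1^*K_{\Sigma_1})$ corresponds, after applying $R\pi_{1*}$ and tensoring with $K_{\Sigma_1}[-1]$, to the odd $K_{\Sigma_1}[-1]$ direction in $T[1]\Sigma_{1,\dbar}$ mapping to $TY$; and the Maurer-Cartan condition for such an odd tangent vector becomes exactly $[\phi_1,\phi_2]=0$.

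To upgrade this bijection to an $L_\infty$-equivalence I would work throughout in a Dolbeault presentation: on the left, the $L_\infty$ algebra near $(P,\phi)$ is $\Omega^{*,*}(X,\g_P\oplus\g_P^\vee[1])$ with differential $\dbar+[\phi,-]$, pushed forward along $\pi_1$; on the right, pulling back the curved $L_\infty$ structure $\g_Y$ on $Y=T^*\op{Bun}_G(\Sigma_2)$ along $\bar f$ and including the $T[1]$-factor gives an explicit four-term $\Omega^{0,*}_{\Sigma_1}$-linear $L_\infty$ algebra whose summands, by Serre duality on $\Sigma_2$, match the four bi-degree pieces of $\pi_{1*}\Omega^{*,*}(X,\g_P\oplus\g_P^\vee[1])$ term-by-term. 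Finally one checks that the brackets induced by $\phi_1$ and $\phi_2$ on the Higgs side agree with the AKSZ brackets on the mapping-space side, and that the invariant pairings (obtained by integration over $X$ on one side and transgression from the degree-$1$ symplectic form on $T^*[1]Y$ on the other) coincide.

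\emph{The main obstacle} is the upgrade from a match of tangent complexes to a match of full $L_\infty$ structures: one has to identify the higher brackets coming from commutation with $\phi$ on the Higgs side with those coming from the curved $L_\infty$ algebra $\g_Y$ of the derived stack $T^*\op{Bun}_G(\Sigma_2)$, all while $R\pi_{1*}$ produces non-trivial higher cohomology along the fiber. Because the paper's treatment of global derived stacks is informal, the cleanest rigorous route is to remain inside the explicit Dolbeault model, where every bracket is a literal wedge product or commutator with $\phi_i$, and only invoke the universal property of $\op{Bun}_G(\Sigma_2)$ at the level of cohomology sheaves.
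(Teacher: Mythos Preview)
Your proposal is correct in outline, but you take a substantially more laborious route than the paper. The paper's proof rests on a single structural observation that you never invoke: the elliptic ringed space $T[1](\Sigma_1\times\Sigma_2)_{\dbar}$ factorizes as $T[1]\Sigma_{1,\dbar}\times T[1]\Sigma_{2,\dbar}$. Once this is said, the general principle that bundles on a product $A\times B$ are the same as maps $A\to\op{Bun}_G(B)$ immediately gives the cotangent theory to maps $T[1]\Sigma_{1,\dbar}\to\op{Bun}_G(T[1]\Sigma_{2,\dbar})$, which is the $\tfrac12$-twisted $B$-model with target $\op{Bun}_G(T[1]\Sigma_2)$; one then only needs the well-known identification $\op{Bun}_G(T[1]\Sigma_2)=T^\ast\op{Bun}_G(\Sigma_2)$ (Higgs bundles on a curve are the cotangent to $\op{Bun}_G$). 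Your decomposition $\phi=\phi_1+\phi_2$ is the explicit unpacking of this factorization, and your concern about matching higher $L_\infty$ brackets is exactly the price you pay for not working at the level of the source elliptic ringed space: by phrasing everything as ``bundles on $T[1](-)$'' rather than ``Higgs bundles on $(-)$'', the paper never has to touch the $[\phi,-]$-brackets by hand, because the $L_\infty$ equivalence is inherited from the universal property of $\op{Bun}_G$ applied to the elliptic ringed space $T[1]\Sigma_2$. Your route buys an explicit Dolbeault description of both sides, which may be useful for concrete computations; the paper's route buys a two-line conceptual proof that sidesteps the very obstacle you flag.
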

Since $T^\ast \op{Bun}_G(\Sigma_2)$ has a (holomorphic) symplectic form, the $\tfrac{1}{2}$-twisted $A$- and $B$-models with this target coincide. 
\begin{proof}
The minimally-twisted Kapustin-Witten theory is the cotangent theory to the derived moduli space of $G$-bundles on $T[1] (\Sigma_1 \times \Sigma_2 )$. The elliptic Lie algebra on $\Sigma_1 \times \Sigma_2$ describing this derived moduli space (near a given principal $G$-bundle) is
$$
\Omega^{\ast,\ast}(\Sigma_1 \times \Sigma_2, \g_P)
$$
with differential $\dbar$. 

Note that we can write
$$
T[1] (\Sigma_1 \times \Sigma_2) = (T[1] \Sigma_1) \times (T[1] \Sigma_2).
$$
Thus, when we dimensionally reduce along $\Sigma_2$, we find the cotangent theory to the elliptic moduli problem describing holomorphic maps
$$
T[1] \Sigma_1 \to \op{Bun}_G (T[1] \Sigma_2).
$$
For a general complex target $X$, the cotangent theory to the space of holomorphic maps $T[1] \Sigma \to X$ is the $\tfrac{1}{2}$-twisted $B$-model on $X$.  

Thus, it remains to verify that, for any Riemann surface $\Sigma$, 
$$
\op{Bun}_G(T[1] \Sigma) = T^\ast \op{Bun}_G(\Sigma).
$$
Note that $\op{Bun}_G(T[1] \Sigma)$ is, by definition, the derived moduli space of pairs $(P,\phi)$, where $P$ is a principal $G$-bundle on $\Sigma$ and $\phi$ is a section of $K_\Sigma \otimes \g_P$.  In other words, $\op{Bun}_G(T[1] \Sigma)$ is the derived moduli space of Higgs bundles on $\Sigma$.  It is well known that this moduli space describes the cotangent bundle to $\op{Bun}_G(\Sigma)$. 
\end{proof}

In a similar way, we see the following.
\begin{lemma}
The dimensional reduction of the fully twisted $\mscr{N}=4$ theory at the point $(1,0)$ in the $\mbb{P}^1$ of twists is the $B$-fully-twisted model with target $\op{Loc}_G(\Sigma_2)$. 

The dimensional reduction of the fully-twisted $\mscr{N}=4$ theory at the point $(0,1)$ is the fully-twisted $A$-model with target $T^\ast \op{Bun}_G(\Sigma_2)$.
\end{lemma}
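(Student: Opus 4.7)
My plan is to prove both parts by the same template as the preceding proposition: realize $\mscr{L}(s,t)$ as the cotangent theory to a mapping moduli problem $\op{Map}((X,\mc{A}(s,t)), BG)$ for an appropriate sheaf of CDGAs $\mc{A}(s,t)$ on $X = \Sigma_1 \times \Sigma_2$, then factor the mapping space using a product decomposition $(X,\mc{A}(s,t)) \simeq (\Sigma_1,\mc{A}_1) \times (\Sigma_2,\mc{A}_2)$ of elliptic ringed spaces, so that dimensional reduction along $\Sigma_2$ produces a cotangent theory to maps out of $(\Sigma_1,\mc{A}_1)$ into a target built from $(\Sigma_2,\mc{A}_2)$.

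First I would handle the $(1,0)$ point. The combined differential $\dbar + \partial$ is the full de Rham differential, so the underlying elliptic ringed space is $X_{dR}$. Since the differential squares to zero on $\Omega^{\ast,\ast}(X, \g_P[\eps])$ precisely when $\g_P$ carries a flat connection, we recognise $\mscr{L}(1,0)$ as the cotangent theory to the elliptic moduli problem of $G$-local systems on $X$, the degree $-3$ pairing arising from integration of $4$-forms on $X$. Now $X_{dR} \simeq (\Sigma_1)_{dR} \times (\Sigma_2)_{dR}$ as elliptic ringed spaces, and the adjunction for mapping spaces gives
$$\op{Loc}_G(X) \simeq \op{Map}((\Sigma_1)_{dR}, \op{Loc}_G(\Sigma_2)).$$
Because dimensional reduction commutes with the $T^\ast[-1]$ construction, $\pi_\ast \mscr{L}(1,0)$ is the cotangent theory to maps $(\Sigma_1)_{dR} \to \op{Loc}_G(\Sigma_2)$, which is by definition the fully-twisted $B$-model with target $\op{Loc}_G(\Sigma_2)$.

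For the $(0,1)$ point I plan to work with the whole family $\mscr{L}(0,t)$ parametrised by $t \in \mbb{A}^1$. At $t = 0$ this is the minimally-twisted Kapustin--Witten theory, which by the preceding proposition reduces under $\pi_\ast$ to the $\tfrac{1}{2}$-twisted $B$-model---equivalently the $\tfrac{1}{2}$-twisted $A$-model, since $T^\ast\op{Bun}_G(\Sigma_2)$ is holomorphic symplectic---with target $T^\ast\op{Bun}_G(\Sigma_2)$. Turning $t$ on adds $t\,\partial/\partial\eps$ to the differential on the $X$-side; the claim I would establish is that after dimensional reduction along $\Sigma_2$ this deformation matches the Rees deformation $\op{Rees}_t(Y_{dR})$ applied to $Y = T^\ast\op{Bun}_G(\Sigma_2)$ in the definition of the fully-twisted $A$-model. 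Specialising at $t = 1$ (equivalently inverting $t$) then identifies $\pi_\ast \mscr{L}(0,1)$ with the fully-twisted $A$-model with target $T^\ast\op{Bun}_G(\Sigma_2)$. Note that $\mscr{L}(0,1)$ is fibrewise acyclic on $X$, which fits the remark in the text that the fully-twisted $A$-model has vanishing perturbative cohomology.

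The main obstacle will be this matching of Rees parameters: one must compare two $t$-families of elliptic $L_\infty$ algebras on $\Sigma_1$, namely $\pi_\ast \mscr{L}(0,t)$ on the one hand and the Rees family of fully-twisted $A$-models with target $T^\ast\op{Bun}_G(\Sigma_2)$ on the other. The approach I would follow is to pass to a harmonic model on $\Sigma_2$ by homotopical transfer, replacing $\Omega^{\ast,\ast}(\Sigma_2, \g_P[\eps])$ by $H^{\ast,\ast}(\Sigma_2, \g_P)[\eps]$ with the transferred $L_\infty$ structure, and then verify by a direct calculation---using Serre duality between $\Omega^{0,\ast}(\Sigma_2, \g_P)$ and $\Omega^{1,\ast}(\Sigma_2, \g_P^\vee)$---that the resulting $t$-deformation on $\Sigma_1$ coincides with the differential $\dbar_{\Sigma_1} + t\,\partial_{\Sigma_1}$ on the Dolbeault--de Rham complex of $\Sigma_1$ with coefficients in the tangent complex of $T^\ast\op{Bun}_G(\Sigma_2)$. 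Keeping the curved $L_\infty$ structure on the de Rham stack side intact, and carefully tracking the bigradings through the transfer, will be the delicate technical point.
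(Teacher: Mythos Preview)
For the $(1,0)$ point your argument is the same as the paper's: recognise $\mscr{L}(1,0)$ as the cotangent theory to $\op{Loc}_G(\Sigma_1\times\Sigma_2)$, factor $(\Sigma_1\times\Sigma_2)_{dR}=(\Sigma_1)_{dR}\times(\Sigma_2)_{dR}$, and apply the mapping-space adjunction.

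For the $(0,1)$ point your high-level strategy---work with the family $\mscr{L}(0,t)$ and match $t\,\partial/\partial\eps$ with the target-side Rees deformation $\op{Rees}_t(Y_{dR})$ for $Y=T^\ast\op{Bun}_G(\Sigma_2)$---is correct and is in fact more faithful to the paper's own definition of the fully twisted $A$-model than the paper's proof (which characterises the $A$-twist as ``introducing the de Rham differential on $T[1]\Sigma$'', a phrase that literally describes the $B$-twist). However, your concrete verification step contains the same slip: you propose to check that after transfer the differential on $\Sigma_1$ becomes $\dbar_{\Sigma_1}+t\,\partial_{\Sigma_1}$. That is the \emph{source-side} de Rham deformation, i.e.\ the full $B$-model with target $Y$, which is \emph{not} perturbatively trivial---whereas both $\mscr{L}(0,1)$ and the full $A$-model are. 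So that check cannot succeed.

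The correct verification is more direct and does not require harmonic transfer on $\Sigma_2$. Write the dimensionally reduced $\tfrac12$-twisted theory as $T^\ast[-1]$ of $\op{Map}(\Sigma_1,T[1]Y)$, using the holomorphic symplectic form on $Y$ to identify $T[1]Y\simeq T^\ast[1]Y$. In this description the $\eps$-direction is precisely the cotangent fibre, and $\partial/\partial\eps$ is the identity map from cotangent fibre to base induced by the degree $-2$ pairing on $\g_Y$. Unwinding the definitions, this is exactly the differential $\g_Y[1]\to\g_Y$ in $\op{Rees}_t(Y_{dR})$ together with its dual on the cotangent side. So the matching of $t$-families is an identification of elliptic $L_\infty$ algebras on $\Sigma_1$, with no spectral-sequence or transfer argument needed.
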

\begin{proof}
Let us first prove the $B$-model statement.  At the point $(1,0)$ the fully-twisted $\mscr{N}=4$ theory is the cotangent theory to the moduli space of $G$-local systems on $\Sigma_1 \times \Sigma_2$.  When we dimensionally reduce, we find the cotangent theory to the space of locally constant maps $\Sigma_1 \to \op{Bun}_G(\Sigma_2)$, which is the $B$-model with target $\op{Bun}_G(\Sigma_2)$. 

Recall that the minimally-twisted $\mscr{N}=4$ theory becomes, upon dimensional reduction, the cotangent theory to the moduli of Higgs bundles on $\Sigma_1 \times \Sigma_2$.  We can also view this as the cotangent theory to the space of maps from $T[1] \Sigma_1 \to T^\ast \op{Bun}_G(\Sigma_2)$.  The $A$-model with target $X$ is a deformation of the cotangent theory of holomorphic maps from $T[1] \Sigma \to X$, deformed by introducing the de Rham differential on $T[1] \Sigma$.  It remains to verify that the deformation of the minimally-twisted $\mscr{N}=4$ theory to the twisted theory with parameter $(0,1)$ amounts to introducing the de Rham differential on $T[1] \Sigma_1$; this is straightforward.

\end{proof}
\section*{Appendix}
In this appendix I present a proof of a cohomology vanishing result, which allowed us to conclude that the twist of the full supersymmetric gauge theory coincides with the twist of the anti-self-dual theory.  This result also shows that our minimally twisted theories admit a unique quantization on $\C^2$, invariant under translation, dilation, and $R$-symmetry.  This result is related to a theorem proved in \cite{Cos11}, where I showed by a similar cohomological analysis that ordinary Yang-Mills theory can be quantized on $\R^4$.

\begin{theorem}
Let $\g$ be a simple Lie algebra.  The cohomology of the complex of translation-invariant local functionals for the $\mscr{N}=1,2,4$ minimally twisted field theories, with gauge Lie algebra $\g$, which are also invariant under the action of $\C^\times$ by dilation on $\C^2$ and the appropriate $R$-symmetry group, is trivial in degrees $0$ and $\le -2$.  In degree $1$ it coincides with $H^5(\g)$.  

For the $\mscr{N}=1$ theory, the degree $-1$ cohomology group is also trivial.  For the $\mscr{N}=2,4$ theories, the degree $-1$ cohomology group is $\C$, which corresponds to the Lie algebra of the center of the $R$-symmetry group acting on the twisted theory. 
\end{theorem}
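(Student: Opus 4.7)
The plan is to adapt the cohomological argument of Chapter 6 of \cite{Cos11}, which establishes the analogous vanishing for ordinary Yang-Mills theory on $\mathbb{R}^4$, to our three minimally twisted holomorphic gauge theories on $\mathbb{C}^2$.

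First, I would translate the problem into Lie algebra cohomology of a concrete formal dg Lie algebra. By Chapter 5 of \cite{Cos11}, the complex of translation-invariant local functionals on an elliptic $L_\infty$-algebra on $\mathbb{C}^2$ is quasi-isomorphic, up to a degree shift from the density, to the reduced Chevalley-Eilenberg cochain complex of the fibre at $0$ of the $\infty$-jet bundle, viewed as a formal $L_\infty$-algebra. For our elliptic Lie algebra $\mscr{L}_{\mscr{N}=k}^Q = \Omega^{0,\ast}(\mathbb{C}^2, \mathfrak{g} \otimes A_k \oplus \mathfrak{g}^\vee[k-1] \otimes A_k)$, taking $\bar\partial$-cohomology on the fibre of the jet bundle replaces $\Omega^{0,\ast}(\mathbb{C}^2)$ by $\mathbb{C}[[z_1,z_2]]$, so the translation-invariant local functionals become $C^\ast_{\mathrm{red}}(\widehat{L}_k)$ for
$$\widehat{L}_k \;=\; \bigl(\mathfrak{g} \oplus \mathfrak{g}^\vee[k-1]\bigr) \otimes A_k \otimes \mathbb{C}[[z_1, z_2]],$$
equipped with its natural semidirect-product $L_\infty$-structure inherited from the $\mathfrak{g}$-module structure on $\mathfrak{g}^\vee$ and the product on $A_k \otimes \mathbb{C}[[z_1,z_2]]$.

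Second, I would impose $\mathbb{C}^\times$-equivariance (both dilation and residual $R$-symmetry) to pass to a complex of finite rank in each cohomological degree. Dilation gives each $z_i$ weight $1$; the residual $R$-symmetry acts on the generators of $A_k$ and on the antifield shift with the weights prescribed by the twisting data of the previous section. Any invariant cochain must have total weight zero, and combined with the cohomological degree bound this forces uniform bounds on the number of $\mathfrak{g}$-, $\mathfrak{g}^\vee$-, $A_k$- and $z_i$-factors, so the surviving complex is finite-dimensional in every degree. I would then filter this complex by antifield number (the number of $\mathfrak{g}^\vee$-factors). On the associated graded, Whitehead's lemma, applicable because $\mathfrak{g}$ is semisimple and the coefficient modules are finite-dimensional polynomial $\mathfrak{g}$-representations, reduces the problem to $\mathfrak{g}$-cohomology with coefficients in the $\mathfrak{g}$-invariants of those polynomial modules, which computes as an exterior algebra over the primitives of $H^\ast(\mathfrak{g})$ tensored with a space of weight-zero invariant polynomials.

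Finally, I would match the surviving classes against the target cohomology. For semisimple $\mathfrak{g}$ the primitives of $H^\ast(\mathfrak{g})$ sit in odd degrees $2d_i-1$, the lowest two being in degrees $3$ and $5$. A careful weight-zero accounting shows that the only cohomology classes that survive the $\mathbb{C}^\times$-invariance and are not killed by the spectral-sequence differentials are: one class in cohomological degree $1$ corresponding to $H^5(\mathfrak{g})$ (the standard cubic Chern-Simons-type obstruction familiar from the Yang-Mills calculation of \cite{Cos11}); and, for $\mscr{N}=2, 4$ only, one class in cohomological degree $-1$, which is the infinitesimal generator of the central $\mathbb{C}^\times \subset G_R^{\mathbb{C}^\times}$ that rescales the Frobenius generators of $A_k$ (absent for $\mscr{N}=1$ since $A_1 = \mathbb{C}$ admits no such action). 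The main obstacle is the explicit verification that no other invariant classes survive: potential contributions built from the $H^3$-primitive, from higher antifield number, or from mixed monomials in the $\epsilon_i$ must all either be cohomologous to zero via the antifield-number spectral sequence or be killed by the weight-zero condition. This is a finite but delicate case-by-case analysis, requiring careful tracking of the trace pairing on $A_k$ and of how the derivations $z_i \partial_{z_j}$ and the $R$-symmetry weights combine. Granting this, the vanishing in degrees $0$ and $\le -2$, together with the identification of $H^1$ with $H^5(\mathfrak{g})$, follows; the corollary on unique $\mathbb{C}^2$-quantization is then immediate from the standard fact that $H^5(\mathfrak{g})^{\mathrm{Out}(\mathfrak{g})} = 0$ for any simple $\mathfrak{g}$, as in \cite{Cos11}.
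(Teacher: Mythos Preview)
Your overall strategy—reduce to Lie algebra cohomology via jets, pass to holomorphic variables by taking $\bar\partial$-cohomology, impose dilation and $R$-symmetry invariance, and run a spectral sequence—matches the paper's. But there is a genuine gap in your first reduction.

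You claim that translation-invariant local functionals are, up to a single degree shift, simply $C^\ast_{\mathrm{red}}(\widehat L_k)$. This is not what Chapter~5 of \cite{Cos11} gives. The correct identification is
\[
C_\ast\bigl(\C^4,\ C^\ast_{\mathrm{red}}(\mscr G'_k)\,dz_1\,dz_2\,d\bar z_1\,d\bar z_2\bigr),
\]
Lie algebra \emph{chains} of the abelian translation algebra $\C^4=\langle\partial_1,\partial_2,\bar\partial_1,\bar\partial_2\rangle$ with coefficients in reduced cochains of the jet Lie algebra. Local functionals are Lagrangian densities modulo total derivatives, and the chain direction records precisely that quotient; it is not a uniform shift. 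After removing the antiholomorphic variables and imposing $\C^\times$-invariance, the first page decomposes into several weight sectors $C^\ast_{\mathrm{red}}(\mscr G_k)^{0}\partial_1\partial_2[4]$, $C^\ast_{\mathrm{red}}(\mscr G_k)^{-1}\partial_i[3]$, $C^\ast_{\mathrm{red}}(\mscr G_k)^{-2}[2]$, and the next differential is the action of $\partial_i$.

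This omission breaks the key cancellation. The cubic primitive in $H^3(\g)$ and the Killing-form class in $H^0(\g,\Sym^2\g^\vee)$ both live in antifield number zero (they involve only $\g$-inputs, no $\g^\vee$-inputs) and both pass the weight-zero test, so neither of your two proposed mechanisms can eliminate them; indeed, since the $\g^\vee$-summand is an abelian ideal in $\widehat L_k$, the Chevalley--Eilenberg differential preserves antifield number and that filtration degenerates immediately. In the correct complex, $H^3(\g)\,\partial_1\partial_2$ sits in degree $-1$ and $H^0(\g,\Sym^2\g^\vee)\,z_1^\vee z_2^\vee$ in degree $0$, and it is precisely the $\partial_i$-differential in the chain direction that carries the former isomorphically onto the latter. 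Without this map the claimed vanishing in degree $0$ fails. Once you restore the $C_\ast(\C^2,-)$ direction and organise the spectral sequence by $z$-weight rather than antifield number, the remaining steps (including the computation of $H^\ast(\g_k,\g_k)^{GL_k}$ that produces the degree $-1$ class for $\mscr N=2,4$) go through essentially as you sketch.
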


\begin{proof}
General results from \cite{Cos11}, Chapter 5, allow one to identify the groups of translation-invariant local functionals with certain Lie algebra cohomology groups, as follows.   For each $k \ge 0$, consider the Lie algebra 
\begin{multline*}
\mscr{G}'_k = \g[[z_1,z_2,\br{z}_1, \br{z}_2, \d \br{z}_1, \d \br{z}_2, \eps_1,\dots,\eps_k]]  \\ \oplus \g[[z_1,z_2 \br{z}_1, \br{z}_2, \d \br{z}_1, \d \br{z}_2,\eps_1,\dots,\eps_k]] \d z_1 \d z_2 (\d \eps_1)^{-1} \dots (\d \eps_k)^{-n} [-1].
\end{multline*}
Here $\g$ is a fixed semi-simple Lie algebra; the parameters $\eps_i$ and $\d \zbar_i$ has cohomological degree $1$.   The differential on $\mscr{G}'$ is induced by the usual Dolbeaut operator, $\sum \d \zbar_i \frac{\d}{\d \zbar_i}$. The factor of $\d z_1 \d z_2 \prod( \d \eps_i)^{-1}$ is indicated to show how symmetries of $\C^2$, and the $R$-symmetry group, act on everything.

Note that $\mscr{G}'_k$ is acted on by the Abelian Lie algebra $\C^4$, with basis $\partial_i, \br{\partial}_j$, where $\partial_i$ acts by $\frac{\d}{\d z_i}$ and $\br{\partial}_j$ acts by $\frac{\d}{\d \zbar_i}$.

General results from \cite{Cos11} imply that the complex of translation-invariant deformations of our minimally twisted $\mscr{N}=1$, $\mscr{N}=2$ or $\mscr{N}=4$ theory is given by a Lie algebra cohomology group of the form
$$
C_\ast( \C^4,  C^\ast_{red}( \mscr{G}'_k ) \d z_1 \d z_2 \d \zbar_1 \d \zbar_2 ) . 
$$
for different values of $k$: $k = 0$ corresponds to $\mscr{N}=1$, $k = 1$ to $\mscr{N}=2$ and $k = 2$ to $\mscr{N}=4$. 

This expression indicates the Lie algebra chains of the Abelian Lie algebra $\C^4$ with coefficients in the reduced Lie algebra cochains of $\mscr{G}'_k$.  

Our aim is to show that there is nothing in $H^0$ of this complex which is invariant under the group $\C^\times \times \GL_k(\C)$, which acts as follows.  The factor of $\C^\times$ is the dilation symmetry of $\C^2$, and so acts on $z_i, \zbar_i, \d z_i, \d \zbar_i$ in the evident way.  The factor $GL_k(\C)$ is the $R$-symmetry group, which acts on the exterior algebra $\C[\eps_1,\dots,\eps_k]$ in the evident way.  

The $\C^\times$ action on the $\eps_i$ is taken to be trivial.   This is just a convention: if $\eps_i$ had weight $\lambda \in \Z$ under the action of $\C^\times$, then by conjugating by an isomorphism of  the group $\C^\times \times GL_2(\C)$ we could return to a situation where $\eps_i$ was preserved by the group $\C^\times$.  

Now, let $\mscr{G}_k \subset \mscr{G}'_k$ be the sub Lie algebra which has no $\zbar$'s and no $\d \zbar$'s.  Note that the inclusion $\mscr{G}_k \subset \mscr{G}'_k$ is a quasi-isomorphism.  It follows that we can compute the cohomology groups of interest using $\mscr{G}_k$ in place of $\mscr{G}'_k$.  

Note that the elements $\dbar_i$ in the Abelian Lie algebra $\C^4$ act trivially on $\mscr{G}_k$.  We can rewrite our complex as 
$$
C_\ast(\C^2, C^\ast_{red}(\mscr{G}_k ) \d z_1 \d z_2 ) \otimes \left( \C[\dbar_1, \dbar_2] \d \zbar_1 \d \zbar_2   \right)  
$$
where in the algebra $\C[\dbar_1, \dbar_2]$ the generators $\dbar_1, \dbar_2$ are given degree $-1$. 

We are only interested in quantities which are invariant under the action of $\C^\times \times GL_2(\C)$.  The only $\C^\times$ invariant quantities contain the same number of $\dbar_i$'s and $\d \zbar_j$'s, so we see that our complex reduces to
$$
 C_\ast\left(\C^2, C^\ast_{red}(\mscr{G}_k ) \d z_1 \d z_2  \right)^{\C^\times \times GL_2(\C)} [2].
$$ 
Let 
$$C^\ast_{red}(\mscr{G}_k)^i \subset C^\ast_{red}(\mscr{G}_k)$$ be the subcomplex consisting of elements of weight $-i$ under the $\C^\times$ action (with respect to which $z_i$ has weight $1$ and $\eps_j$ has weight $0$).  This complex has a natural $GL_k(\C)$-action; the invariants under $GL_k(\C)$ will be denoted by $C^\ast_{red}(\mscr{G}_k)^{i, GL_k(\C)}$.  

We will compute the $\C^\times\times GL_k(\C)$-invariant cohomology group we want by a spectral sequence.  The first term of our spectral sequence computes the cohomology with respect to the internal differential on $C^\ast_{red}(\mscr{G}_k)$; the next term uses the action of the Abelian Lie algebra $\C^2$. 

The $\C^\times$-invariant part of the first term of our spectral sequence is the the direct sum of the following complexes:
\begin{align*}
C^\ast_{red}(\mscr{G}_k)^{0} \partial_1 \partial_2 \d z_1 \d z_2 [4] \\
C^\ast_{red}(\mscr{G}_k)^{-1} \partial_1 \d z_1 \d z_2[3] \oplus C^\ast_{red}(\mscr{G}_k)^{-1} \partial_2 \d z_1 \d z_2 [3] \\
C^\ast_{red}( \mscr{G}_k)^{-2}  \d z_1 \d z_2 [2].
\end{align*} 
Now, recall that
$$
\mscr{G}_k = \g[[z_1,z_2,\eps_1,\dots,\eps_k]] \oplus \g[[z_1,z_2,\eps_1,\dots,\eps_k]] \d z_1 \d z_2 (\d \eps_1)^{-1} \dots (\d \eps_k)^{-1} [-1]. 
$$
Let us denote by $\g_k$ the Lie algebra
$$
\g_k =  \g[\eps_1,\dots,\eps_k].
$$

Thus, the complex $C^\ast_{red}(\mscr{G}_k)^{0}$ is just $C^\ast_{red}(\g_k)$.   The $GL_k(\C)$-invariants of $C^\ast_{red}(\g_k)$ is $C^\ast_{red}(\g)$.   So, we find $H^\ast_{red}(\g)[4]$ as one summand of the $\C^\times \times GL_k(\C)$-invariant part of the first page of our spectral sequence.

Next, note that $C^\ast_{red}(\mscr{G}_k)^{-1}$ consists of $C^\ast( \g_k, (z_i \g_k)^\vee)$, for $i = 1,2$.   If we further restrict to the $GL_k(\C)$ invariants, we find $C^\ast(\g, (z_i \g)^\vee)$.   Since $\g$ is semi-simple, $H^\ast(\g,\g^\vee) = 0$, so that $GL_k(\C)$-invariant part of the cohomology of $C^\ast_{red}(\mscr{G}_k)^{1}$ vanishes. 

Finally, let us consider $C^\ast_{red}(\mscr{G}_k)^{-2}$.   One possible source of weight $-2$ cochains is $C^\ast(\g_k, (z_i\g_k)^\vee \otimes (z_j \g_k)^\vee$.  If we restrict again to $GL_k(\C)$ invariants, we find $C^\ast(\g_k, (z_i\g_k)^\vee \otimes (z_j \g_k)^\vee$.  Since $H^\ast(\g, \wedge^2 \g^\vee) = 0$, the only non-zero possibility is when $i = 1$ and $j = 2$, giving us
$$
H^\ast(\g,  \Sym^2 \g^\vee (z_1^\vee z_2^\vee )  ) 
$$
as a second direct summand of the $GL_k(\C) \times \C^\times$ invariant part of our spectral sequence.  

A second possibility for weight $-2$ cochains is 
$$
C^\ast(\g_k, (\d z_1 \d z_2 \d \eps_1^{-1} \dots \d \eps_k^{-1} \g_k )^\vee )[2].
$$
Here, we take $\d \eps_1^{-1} \dots \d \eps_k^{-1}$ to have cohomological degree $-k$ if the $\eps_i$ have degree $1$, and $+k$ if the $\eps_i$ have degree $-1$.  Let $\C_{\det}$ refer to $\C$ viewed as the determinant representation of $GL_k(\C)$; then this term in the spectral sequence can be written as $C^\ast(\g_k, \g_k^\vee \otimes \C_{\det} [2 \pm k])$.  

Note that the invariant pairing on the Lie algebra $\g$, as well as the natural Frobenius algebra structure on $\C[\eps_1,\dots,\eps_k]$, gives an isomorphism 
$$\g_k^\vee \otimes \C_{\det} [\pm k] \iso \g_k.$$
Thus, this term in the spectral sequence is simply $C^\ast(\g_k, \g_k )[2]$. 

To summarize: we have shown that the $GL_k(\C) \times \C^\times$-invariant part of the $E_1$ page of our spectral sequence consists of 
$$
H^\ast_{red}(\g) [4] \oplus H^\ast(\g, \Sym^2 \g^\vee ) \oplus H^\ast(\g_k, \g_k  )^{GL_k(\C)}[2].
$$
Note that $H^\ast_{red}(\g) [4]$ consists of $H^3(\g)$ in degree $-1$ and $H^5(\g)$ in degree $+1$; these are the only degrees of interest to us.  Also, $H^\ast(\g, \Sym^2 \g^\vee)$ consists of the $\g$-invariants in $\Sym^2 \g^\vee$ in degree $0$, and is zero in all other degrees which are $\le 1$. 

It is not completely obvious what $H^\ast(\g_k, \g_k)$ is, but we will compute it shortly.  

To complete the proof, we need to prove two lemmas.
\begin{lemma}
On the next page of the spectral sequence, the differential gives an isomorphism between $H^3(\g)$ (situated in degree $-1$) and $H^0(\g \Sym^2 \g^\vee)$ (situated in degree $0$).
\end{lemma}
\begin{proof}
If we reintroduce the symbols $z, \d z$, we see that our possible differential maps 
$$H^3(\g) \partial_1 \partial_2 \to H^0( \g, \Sym^2 \g^\vee z_1^\vee z_2^\vee ) .$$ 
A straightforward computation (carried out in a very similar context in \cite{Cos11}, Chapter 5) shows that this map is an isomorphism. 
\end{proof}

\begin{lemma}
The cohomology $H^\ast(\g_k, \g_k )^{GL_k(\C)}[2]$ is the following:
\begin{enumerate}
\item $0$ if $k = 0$.
\item $\C$ in degree $-1$ if $k = 1$ or $k = 2$.
\end{enumerate} 
\end{lemma}
\begin{proof}
If $k = 0$, then $\g_k = \g$, and $H^\ast(\g,g) = 0$.  

If $k = 1$, then $\g_k = \g[\eps]$.  This corresponds to the theory with $\mscr{N}=2$ supersymmetry; in which case we put $\eps$ in degree $-1$. 

The only $\C^\times$-invariant part of $H^\ast(\g[\eps], \g[\eps] ) [2]$ comes from 
$$H^\ast(\g, (\eps \g)^\vee \otimes \eps \g)[2] = H^\ast(\g, \Sym^2 \g) [1]  .$$  
Thus, we find $H^0(\g, \Sym^2 \g)$ in degree $-1$, and $0$ in degrees $0,1$. 

Next, let us consider the case $k = 2$, which corresponds to the theory with $\mscr{N}=4$ supersymmetry.  In this case, we take the $\eps_i$ to have cohomological degree $1$.  ($GL_2(\C)$-invariance implies that the cohomology groups we end up with are independent of the degree we choose to assign to the $\eps_i$, as long as that degree is odd.  As, $GL_2(\C)$-invariance implies that the number of $\eps_i$'s and $\eps_i^\vee$'s is the same). 

Let us give $\g[\eps_1,\eps_2]$ a grading by giving $\g$ has weight $0$, and the subspace $(\eps_1 \oplus \eps_2)\g[\eps_1,\eps_2]$  weight $-1$. This induces a grading on $C^\ast(\g_2, \g_2)$, not compatible with the differential.  Define $F^k C^\ast (\g_2, \g_2)$ to be the subcomplex of elements of weight $\ge k$ in this grading.   The differential on $C^\ast(\g_2, \g_2)$ preserves these subspaces, so that we have defined a filtration on the complex $C^\ast(\g_2, \g_2)$.  

This filtration induces a spectral  sequence, whose first term is the cohomology of the associated graded.   We will compute the $GL_2(\C)$ invariants of this cohomology.   

Let us denote by $V$ the $2$-dimensional vector space spanned by $\eps_1,\eps_2$, situated in degree $1$.  Thus, our Lie algebra is $\g \otimes \Sym^\ast V$. 

The $GL(V)$-invariants of $\Gr C^\ast(\g_2, \g_2 )[2]$ breaks up as a direct sum of the following four subspaces: 
\begin{enumerate}
\item $C^\ast(\g, \g)$.  The cohomology of this summand is of course zero.
\item
$C^\ast(\g, ( V \otimes \g)^\vee  \otimes (V\otimes \g))[1]$. The $GL(V)$-invariants of $V^\vee \otimes V$ are one dimensional.  The $\g$-invariants of $\g^\vee \otimes \g$ coincide with the $\g$-invariants of $\Sym^2 \g$.  Thus, this summand contributes $H^\ast(\g, \Sym^2 \g)[1]$.  The only part that can contribute to the cohomology groups of interest is $H^0(\g,\Sym^2 \g)$ in degree $-1$.
\item $C^\ast(\g, (\wedge^2 V \otimes \g)^\vee \otimes (\wedge^2 V \otimes g) )[1]$.  Again, the $GL(V)$-invariants gives us $H^0(\g, \Sym^2\g)$ in degree $-1$.   
\item $C^\ast(\g,  \Sym^2 (V \otimes \g)^\vee \otimes  (\wedge^2 V \otimes\g) )$.  The $GL(V)$-invariants of this is one dimensional. Thus, we find $H^0(\g, \wedge^2 \g^\vee \otimes \g)$ in degree $0$.  Since $\g$ is semi-simple, this coincides with $H^0(\g, \wedge^3 \g)$ in degree $0$.  
\end{enumerate} 
The differential on the next page of the spectral  sequence maps the third summand listed above to the fourth.  Thus, we find that the cohomology reduces to $H^0(\g, \Sym^2 \g)$ in degree $-1$. 
\end{proof}

This completes the proof of the theorem. 
\end{proof}

%\bibliographystyle{../latex/hep.bst}
%\bibliography{../latex/bibliograph}

\def\cprime{$'$}

\end{document}